\author{Mark Weber}
\thanks{}
\address{Department of Mathematics, Macquarie University}
\title{Polynomials in categories with pullbacks}
\keywords{polynomial functors, 2-monads}
\newtheorem{thm}{\bf Theorem}
\newtheorem{prop}{\bf Proposition}
\newtheorem{lem}{\bf Lemma}
\newtheorem{cor}{\bf Corollary}
\newcommand{\tn}[1]{\textnormal{#1}}
\newcommand{\tnb}[1]{\textnormal{\bf #1}}
\newcommand{\tensor}{\otimes}
\newcommand{\C}{\mathbb{C}}
\newcommand{\N}{\mathbb{N}}
\newcommand{\comp}{\circ}
\newcommand{\id}{\tn{id}}
\newcommand{\ca}[1]{\mathcal{#1}}
\newcommand{\ladj}{\dashv}
\newcommand{\iso}{\cong}
\newcommand{\Set}{\tnb{Set}}
\newcommand{\Cat}{\tnb{Cat}}
\newcommand{\Top}{\tnb{Top}}
\newcommand{\CAT}{\tnb{CAT}}
\newcommand{\op}{\tn{op}}
\newcommand{\co}{\tn{co}}
\newcommand{\TwoCAT}{\mathbf{2}{\tnb{-CAT}}}
\newcommand{\Span}[1]{{\tnb{Span}_{#1}}}
\newcommand{\Cospan}[1]{{\tnb{Cospan}_{#1}}}
\newcommand{\Polyc}[1]{{\tnb{Poly}_{#1}}}
\newcommand{\PFun}[1]{{\tnb{P}_{#1}}}
\newcommand{\PsAlg}[1]{\tn{Ps-}{#1}\tn{-Alg}}
\newcommand{\Alg}[1]{{#1}\tn{-Alg}}
\newcommand{\Algs}[1]{{#1}\tn{-Alg}_{\tn{s}}}
\newcommand{\Cart}{\tnb{CAT}_{\tn{pb}}}
\newcommand{\lft}[1]{{#1}^{\bullet}}
\newcommand{\rgt}[1]{{#1}_{\bullet}}
\renewcommand{\P}{\mathbb{P}}
\newcommand{\B}{\mathbb{B}}
\renewcommand{\S}{\mathbb{S}}
\newcommand{\SMCMnd}{\tnb{S}}
\newcommand{\BMCMnd}{\tnb{B}}
\newcommand{\MCMnd}{\tnb{M}}
\newcommand{\FCMnd}{\tnb{C}_{\tn{fin}}}
\newcommand{\FPMnd}{\tnb{P}_{\tn{fin}}}
\begin{document}

\maketitle

\begin{abstract}
The theory developed by Gambino and Kock, of polynomials over a locally cartesian closed category $\mathcal{E}$, is generalised for $\mathcal{E}$ just having pullbacks. The 2-categorical analogue of the theory of polynomials and polynomial functors is given, and its relationship with Street's theory of fibrations within 2-categories is explored. Johnstone's notion of ``bagdomain data'' is adapted to the present framework to make it easier to completely exhibit examples of polynomial monads.
\end{abstract}


\section{Introduction}
\label{sec:intro}

Thanks to unpublished work of Andr\'{e} Joyal dating back to the 1980's, polynomials admit a beautiful categorical interpretation. Given a multivariable polynomial function $p$ with natural number coefficients, like say
\begin{equation}\label{eq:expository-poly} p(w,x,y,z) = (x^3y + 2, 3x^2z + y) \end{equation}
one may break down its formation as follows. There is a set $\tnb{In} = \{w,x,y,z\}$ of ``input variables'' and a two element set $\tnb{Out}$ of ``output variables''. Rewriting $p(w,x,y,z) = (x^3y + 1 + 1,x^2z + x^2z + x^2z + y)$, there is a set
\[ \tnb{MSum} = \{x^3y, (1)_1, (1)_2, (x^2z)_1, (x^2z)_2, (x^2z)_3, y\} \]
of ``monomial summands'', and a set
\[ \tnb{UVar} = \{x_1,x_2,x_3,y_1,x_4,x_5,z_1,x_6,x_7,z_2,x_8,x_9,z_3,y_2\} \]
of ``usages of variables'', informally consisting of no $w$'s, nine $x$'s, two $y$'s and three $z$'s. The task of forming the polynomial $p$ can then be done in three steps. First one takes the input variables and duplicates or ignores them according to how often each variable is used. The book-keeping of this step is by means of the evident function $p_1:\tnb{UVar} \to \tnb{In}$, which in our example forgets the subscripts of elements of $\tnb{UVar}$. In the second step one performs all the multiplications, and this is book-kept by taking products over the fibres of the function $p_2:\tnb{UVar} \to \tnb{MSum}$ which sends each usage to the monomial summand in which it occurs, that is
\[ \begin{array}{ccccc} {x_1,x_2,x_3,y_1 \mapsto x^3y} && {x_4,x_5,z_1 \mapsto (x^2z)_1}
&& {x_6,x_7,z_2 \mapsto (x^2z)_2} \\ {x_8,x_9,z_3 \mapsto (x^2z)_3}
&& {y_2 \mapsto y.} && {} \end{array} \]
Finally one adds up the summands, and this is book-kept by summing over the fibres of the evident function $p_3:\tnb{MSum} \to \tnb{Out}$. Thus the polynomial $p$ ``is'' the diagram
\begin{equation}\label{eq:first-poly} \xygraph{!{0;(2,0):} {\tnb{In}}="p1" [r] {\tnb{UVar}}="p2" [r] {\tnb{MSum}}="p3" [r] {\tnb{Out}}="p4" "p1":@{<-}"p2"^-{p_1}:"p3"^-{p_2}:"p4"^-{p_3}} \end{equation}
in the category $\Set$. A categorical interpretation of the formula (\ref{eq:expository-poly}) from the diagram (\ref{eq:first-poly}) begins by regarding an $n$-tuple of variables as (the fibres of) a function into a given set of cardinality $n$. Duplication of variables is then interpretted by the functor $\Delta_{p_1} : \Set/\tnb{In} \to \Set/\tnb{UVar}$ given by pulling back along $p_1$, taking products by the functor $\Pi_{p_2} : \Set/\tnb{UVar} \to \Set/\tnb{MSum}$ and taking sums by applying the functor $\Sigma_{p_3}:\Set/\tnb{MSum} \to \Set/\tnb{Out}$ given by composing with $p_3$. Composing these functors gives
\[ \PFun{}(p) : \Set/\tnb{In} \to \Set/\tnb{Out} \]
the polynomial functor corresponding to the polynomial $p$.

Functors of the form $\Delta_{p_1}$, $\Pi_{p_2}$ and $\Sigma_{p_3}$ are part of the bread and butter of category theory. For any map $p_3$ in any category, one may define $\Sigma_{p_3}$ between the appropriate slices, and one requires only pullbacks in the ambient category to interpret $\Delta_{p_1}$ more generally. The functor $\Pi_{p_2}$ is by definition the right adjoint of $\Delta_{p_2}$, and its existence is a condition on the map $p_2$, called \emph{exponentiability}. Locally cartesian closed categories are by definition categories with finite limits in which all maps are exponentiable. Consequently a reasonable general categorical definition of polynomial is as a diagram
\begin{equation}\label{eq:poly-gen-intro} \xygraph{!{0;(2,0):} {X}="p1" [r] {A}="p2" [r] {B}="p3" [r] {Y}="p4" "p1":@{<-}"p2"^-{p_1}:"p3"^-{p_2}:"p4"^-{p_3}} \end{equation}
in some locally cartesian closed category $\ca E$. The theory polynomials and polynomial functors was developed at this generality in the beautiful paper \cite{GambinoKock-PolynomialFunctors} of Gambino and Kock. There the question of what structures polynomials in a locally cartesian closed $\ca E$ form was considered, and it was established in particular that polynomials can be seen as the arrows of certain canonical bicategories, with the process of forming the associated polynomial functor giving homomorphisms of bicategories.

In this paper we shall focus on the bicategory $\Polyc{\ca E}$ of polynomials and cartesian maps between them in the sense of \cite{GambinoKock-PolynomialFunctors}. Our desire to generalise the above setting comes from the existence of canonical polynomials and polynomial functors for the case $\ca E = \Cat$ and the wish that they sit properly within an established framework. While local cartesian closedness is a very natural condition of great importance to categorical logic, enjoyed for example by any elementary topos, it is not satisfied by $\Cat$. Avoiding the assumption of local cartesian closure may be useful also for applications in categorical logic. For example, the categories of classes considered in Algebraic Set Theory \cite{JoyalMoerdijk-AlgSetTheory} are typically not assumed to be locally cartesian closed, but the small maps are assumed to be exponentiable.

The natural remedy of this defect is to define a polynomial $p$ between $X$ and $Y$ in a category $\ca E$ with pullbacks to be a diagram as in (\ref{eq:poly-gen-intro}) such that $p_2$ is an exponentiable map. Since exponentiable maps are pullback stable and closed under composition, one obtains the bicategory $\Polyc{\ca E}$ together with the ``associated-polynomial-functor homomorphism'', as before. We describe this in Section \ref{sec:Poly-in-Cats}.

The main technical innovation of Sections \ref{sec:elementary} and \ref{sec:Poly-in-Cats} is to remove any reliance on type theory in the proofs, giving a completely categorical account of the theory. In establishing the bicategory structure on $\Polyc{\ca E}$ in Section 2 of \cite{GambinoKock-PolynomialFunctors}, the internal language of $\ca E$ is used in an essential way, especially in the proof of Proposition 2.9. Our development makes no use of the internal language. Instead we isolate the concept of a distributivity pullback in Section \ref{ssec:dpb} and prove some elementary facts about them. Armed with this technology we then proceed to give an elementary account of the bicategory of polynomials, and the homomorphism which encodes the formation of associated polynomial functors. Our treatment requires only pullbacks in $\ca E$.

Our second extension to the categorical theory of polynomials is motivated by the fact that $\Cat$ is a 2-category. Thus in Section \ref{ssec:2-categorical-version} we develop the theory of polynomials within a 2-category $\ca K$ with pullbacks, and the polynomial 2-functors that they determine. In this context the structure formed by polynomials is a degenerate kind of tricategory, called a 2-bicategory, which roughly speaking is a bicategory whose homs are 2-categories instead of categories. However except for this change, the theory works \emph{in the same way} as for categories. In fact our treatment of the 1-categorical version of the theory in Section \ref{sec:Poly-in-Cats} was tailored in order to make the previous sentence true (in addition to giving the desired generalisation).

A first source of examples of 2-categorical polynomials come from the 2-monads considered first by Street \cite{Street-FibrationIn2cats} whose algebras are fibrations. In Proposition \ref{prop:fib-monads-as-poly} these 2-monads are exhibited as being polynomial in general. Fibrations in a 2-category play another role in this work, because it is often the case that the maps participating in a polynomial may themselves be fibrations or opfibrations in the sense of Street. This has implications for the properties that the resulting polynomial 2-functor inherits. To this end, the general types of 2-functor that are compatible with fibrations are recalled from \cite{Weber-Fam2fun} in Section \ref{ssec:fam-2-fun}, and the polynomials that give rise to them are identified in Theorem \ref{thm:fibrational-polynomials}.

As explained in \cite{Bourke-Thesis,Lack-Codescent} certain 2-categorical colimits called codescent objects are important in 2-dimensional monad theory. Theorem \ref{thm:fibrational-polynomials} has useful consequences in \cite{Weber-CodescCrIntCat}, in which certain codescent objects which arise naturally from a morphism of 2-monads are considered. When these codescent objects arise from a situation conforming appopriately to the hypotheses of Theorem \ref{thm:fibrational-polynomials}, they acquire extra structure which facilitates their computation. Also of relevance to the computation of associated codescent objects, we have in Theorem \ref{thm:polynomials-sifted-colims} identified sufficient conditions on polynomials in $\Cat$ so that their induced polynomial 2-functors preserve all sifted colimits.

While the bicategorical composition of polynomials has been established in \cite{GambinoKock-PolynomialFunctors}, and more generally in Sections \ref{sec:Poly-in-Cats} and \ref{sec:Poly-in-2-Cats} of this paper, actually exhibiting explicitly a polynomial monad requires some effort due to the complicated nature of this composition. However one can often avoid the need to check monad axioms by using an alternative approach, based on Johnstone's notion of ``bagdomain data'' \cite{Johnstone-VariationsBagdomain}. The essence of this approach is described in Theorem \ref{thm:bagdomain-data} and its 2-categorical analogue Theorem \ref{thm:2-bagdomain-data}. These methods are then illustrated in Section \ref{ssec:exhibiting-examples}, where various fundamental examples of polynomial 2-monads on $\Cat$ are exhibited. In particular the 2-monads on $\Cat$ for symmetric and for braided monoidal categories are polynomial 2-monads.

Polynomial functors over some locally cartesian closed category $\ca E$ arise in diverse mathematical contexts as explained in \cite{GambinoKock-PolynomialFunctors}. They arise in computer science under the name of \emph{containers} \cite{AAG-Containers}. Tambara in \cite{Tambara-MultTransfer} studied polynomials over categories of finite $G$-sets motivated by representation theory and group cohomology. Very interesting applications of Tambara's work were found by Brun in \cite{Brun-WittTambara} to Witt vectors, and in \cite{Brun-WittSpectraCobordism} also to equivariant stable homotopy theory and cobordism. Moreover in \cite{BatJoyKockMas-Opetopes} one finds applications of polynomial functors to higher category theory.

Having generalised to the consideration of non-locally cartesian closed categories we have expanded the possible scope of applications. In this article we have described some basic examples of polynomial monads over $\Cat$. Further examples for $\Cat$ of relevance to operads are provided in \cite{BataninWeber-Guitart, Weber-CodescCrIntCat, Weber-OpPoly2Mnd}. The results of Section \ref{sec:Poly-in-Cats} apply also to polynomials over $\Top$ which were a part of the basic setting of the work of Joyal and Bisson \cite{BissonJoyal-DyerLashof} on Dyer-Lashof operations.

\emph{Notations}. We denote by $[n]$ the ordinal $\{0 < ... < n\}$ regarded as a category. The category of functors $\ca A \to \ca B$ and natural transformations between them is usually denoted as $[\ca A,\ca B]$, though in some cases we also use exponential notation $\ca B^{\ca A}$. For instance $\ca E^{[1]}$ is the arrow category of a category $\ca E$, and $\ca E^{[2]}$ is a category whose objects are composable pairs of arrows of $\ca E$. A 2-monad is a $\Cat$-enriched monad, and given a 2-monad $T$ on a 2-category $\ca K$, we denote by $\Algs T$ the 2-category of strict $T$-algebras and strict maps, $\Alg T$ the 2-category of strict algebras and strong maps{\footnotemark{\footnotetext{Which are $T$-algebra morphisms up to coherent isomorphism}}} and $\PsAlg T$ for the 2-category of pseudo-$T$-algebras and strong maps, following the usual notations of 2-dimensional monad theory \cite{BWellKellyPower-2DMndThy, Lack-Codescent}.

\section{Elementary notions}
\label{sec:elementary}

In this section we describe the elementary notions which underpin our categorical treatment of the bicategory of polynomials in Section \ref{sec:Poly-in-Cats}. In Section \ref{ssec:exponentiable-morphisms} we recall basic facts and terminology regarding exponentiable morphisms. In Section \ref{ssec:dpb} we introduce distributivity pullbacks, and prove various general facts about them.

\subsection{Exponentiable morphisms.}
\label{ssec:exponentiable-morphisms}

Given a morphism $f:X \to Y$ in a category $\ca E$, we denote by $\Sigma_f:\ca E/X \to \ca E/Y$ the functor given by composition with $f$. When $\ca E$ has pullbacks $\Sigma_f$ has a right adjoint denoted as $\Delta_f$, given by pulling back maps along $f$. When $\Delta_f$ has a right adjoint, denoted as $\Pi_f$, $f$ is said to be \emph{exponentiable}. A commutative square in $\ca E$ as on the left
\[ \xygraph{!{0;(3,0):} {\xybox{\xygraph{!{0;(1.2,0):} {A}="tl" [r] {B}="tr" [d] {D}="br" [l] {C}="bl" "tl":"tr"^-{f}:"br"^-{k}:@{<-}"bl"^-{g}:@{<-}"tl"^-{h}}}} [r]
{\xybox{\xygraph{!{0;(1.2,0):} {\ca E/A}="tl" [r] {\ca E/B}="tr" [d] {\ca E/D}="br" [l] {\ca E/C}="bl" "tl":"tr"^-{\Sigma_f}:@{<-}"br"^-{\Delta_k}:@{<-}"bl"^-{\Sigma_g}:"tl"^-{\Delta_h} [d(.5)r(.35)] :@{=>}[r(.3)]^-{\alpha}}}} [r]
{\xybox{\xygraph{!{0;(1.2,0):} {\ca E/A}="tl" [r] {\ca E/B}="tr" [d] {\ca E/D}="br" [l] {\ca E/C}="bl" "tl":@{<-}"tr"^-{\Delta_f}:"br"^-{\Pi_k}:"bl"^-{\Delta_g}:@{<-}"tl"^-{\Pi_h} [d(.5)r(.35)] :@{<=}[r(.3)]^-{\beta}}}}} \]
determines a natural transformation $\alpha$ as in the middle, as the mate of the identity $\Sigma_k\Sigma_f=\Sigma_g\Sigma_h$ via the adjunctions $\Sigma_h \ladj \Delta_h$ and $\Sigma_k \ladj \Delta_k$. We call $\alpha$ a \emph{left Beck-Chevalley cell} for the original square. There is another left Beck-Chevalley cell for this square, namely $\Sigma_h\Delta_f \to \Delta_g\Sigma_k$, obtained by mating the identity $\Sigma_k\Sigma_f=\Sigma_g\Sigma_h$ with the adjunctions $\Sigma_f \ladj \Delta_f$ and $\Sigma_g \ladj \Delta_g$. If in addition $h$ and $k$ are exponentiable maps, then taking right adjoints produces the natural transformation $\beta$ from $\alpha$, and we call this a \emph{right Beck-Chevalley cell} for the original square. There is another right Beck-Chevalley cell $\Delta_k\Pi_g \to \Pi_f\Delta_h$ when $f$ and $g$ are exponentiable. It is well-known that the original square is a pullback if and only if either associated left Beck-Chevalley cell is invertible, and when $h$ and $k$ are exponentiable, these conditions are also equivalent to the right Beck-Chevalley cell $\beta$ being an isomorphism. Under these circumstances we shall speak of the left or right Beck-Chevalley isomorphisms.

Clearly exponentiable maps are closed under composition and any isomorphism is exponentiable. Moreover, exponentiable maps are pullback stable. For given a pullback square as above in which $g$ is exponentiable, one has $\Sigma_h\Delta_f \iso \Delta_g\Sigma_k$, and since $\Sigma_h$ is comonadic, $\Delta_g$ has a right adjoint by the Dubuc adjoint triangle theorem \cite{Dubuc-KanExtensions}.

When $\ca E$ has a terminal object $1$ and $f$ is the unique map $X \to 1$, we denote by $\Sigma_X$, $\Delta_X$ and $\Pi_X$ the functors $\Sigma_f$, $\Delta_f$ and $\Pi_f$ (when it exists) respectively. In fact since $\Sigma_X:\ca E/X \to \ca E$ takes the domain of a given arrow into $X$, it makes sense to speak of it even when $\ca E$ doesn't have a terminal object. An object $X$ of a finitely complete category $\ca E$ is exponentiable when the unique map $X \to 1$ is exponentiable in the above sense (ie when $\Pi_X$ exists). A finitely complete category $\ca E$ is \emph{cartesian closed} when all its objects are exponentiable, and \emph{locally cartesian closed} when all its morphisms are exponentiable.

Note that as right adjoints the functors $\Delta_f$ and $\Pi_f$ preserve terminal objects. An object $h:A \to X$ of the slice category $\ca E/X$ is terminal if and only if $h$ is an isomorphism in $\ca E$, but there is also a canonical choice of terminal object for $\ca E/X$ -- the identity $1_X$. So for the sake of convenience we shall often assume below that $\Delta_f$ and $\Pi_f$ are chosen so that $\Delta_f(1_Y)=1_X$ and $\Pi_f(1_X)=1_Y$.

\subsection{Distributivity pullbacks.}
\label{ssec:dpb}
For $f:A \to B$ in $\ca E$ a category with pullbacks, $\Delta_f:\ca E/B \to \ca E/A$ expresses the process of pulling back along $f$ as a functor. One may then ask: what basic categorical process is expressed by the functor $\Pi_f:\ca E/A \to \ca E/B$, when $f$ is an exponentiable map?

Let us denote by $\varepsilon_f^{(1)}$ the counit of $\Sigma_f \ladj \Delta_f$, and when $f$ is exponentiable, by $\varepsilon_f^{(2)}$ the counit of $\Delta_f \ladj \Pi_f$. The components of these counits fit into the following pullbacks:
\begin{equation}\label{diag:Counits-BaseChange}
\xygraph{{\xybox{\xygraph{{Y}="tl" [r] {X}="tr" [d] {B}="br" [l] {A}="bl" "tl":"tr"^-{\varepsilon_{f,b}^{(1)}}:"br"^-{b}:@{<-}"bl"^-{f}:@{<-}"tl"^-{\Delta_fb} "tl":@{}"br"|-*{\scriptstyle{pb}}}}} [r(3.5)] 
{\xybox{\xygraph{{Q}="tl" [r] {P}="tm" [r] {A}="tr" [d] {B}="br" [l(2)] {R}="bl" "tl":"tm"^-{\varepsilon_{f,a}^{(2)}}:"tr"^-{a}:"br"^-{f}:@{<-}"bl"^-{\Pi_fa}:@{<-}"tl"^-{\varepsilon_{f,\Pi_fa}^{(1)}} "tl":@{}"br"|-*{\scriptstyle{pb}}}}}} 
\end{equation}
Now the universal property of $\varepsilon_f^{(1)}$, as the counit of the adjunction $\Sigma_f \ladj \Delta_f$, is equivalent to the square on the left being a pullback as indicated. An answer to the above question is obtained by identifying what is special about the diagram on the right in (\ref{diag:Counits-BaseChange}), that corresponds to the universal property of $\varepsilon_f^{(2)}$ as the counit of $\Delta_f \ladj \Pi_f$. To this end we make
\begin{defn}\label{def:pb-around}
Let $g:Z \to A$ and $f:A \to B$ be a composable pair of morphisms in a category $\ca E$. Then a \emph{pullback around} $(f,g)$ is a diagram
\[ \xygraph{{X}="tl" [r] {Z}="tm" [r] {A}="tr" [d] {B}="br" [l(2)] {Y}="bl" "tl":"tm"^-{p}:"tr"^-{g}:"br"^-{f}:@{<-}"bl"^-{r}:@{<-}"tl"^-{q} "tl":@{}"br"|-*{\scriptstyle{pb}}} \]
in which the square with boundary $(gp,f,r,q)$ is, as indicated, a pullback. A morphism $(p,q,r) \to (p',q',r')$ of pullbacks around $(f,g)$ consists of $s:X \to X'$ and $t:Y \to Y'$ such that $p's=p$, $qs=tq'$ and $r=r's$. The category of pullbacks around $(f,g)$ is denoted $\tn{PB}(f,g)$.
\end{defn}
For example the pullback on the right in (\ref{diag:Counits-BaseChange}) exhibits $(\varepsilon_{f,a}^{(2)}, \varepsilon_{f,\Pi_fa}^{(1)},\Pi_fa)$ as a pullback around $(f,a)$. One may easily observe directly that the universal property of $\varepsilon_{f,a}^{(2)}$ is equivalent to $(\varepsilon_{f,a}^{(2)}, \varepsilon_{f,\Pi_fa}^{(1)},\Pi_fa)$ being a terminal object of $\tn{PB}(f,a)$. Thus we make
\begin{defn}\label{def:dpb}
Let $g:Z \to A$ and $f:A \to B$ be a composable pair of morphisms in a category $\ca E$. Then a \emph{distributivity} pullback around $(f,g)$ is a terminal object of $\tn{PB}(f,g)$. When $(p,q,r)$ is a distributivity pullback, we denote this diagramatically as follows:
\[ \xygraph{{X}="tl" [r] {Z}="tm" [r] {A}="tr" [d] {B}="br" [l(2)] {Y}="bl" "tl":"tm"^-{p}:"tr"^-{g}:"br"^-{f}:@{<-}"bl"^-{r}:@{<-}"tl"^-{q} "tl":@{}"br"|-*{\scriptstyle{dpb}}} \]
and we say that this diagram exhibits $r$ as a distributivity pullback of $g$ \emph{along} $f$.
\end{defn}
Thus the answer to the question posed at the beginning of this section is: when $f:A \to B$ is an exponentiable map in $\ca E$ a category with pullbacks, the functor $\Pi_f:\ca E/A \to \ca E/B$ encodes the process of taking distributivity pullbacks along $f$.

For any $(p,q,r) \in \tn{PB}(f,g)$ one has a Beck-Chevalley isomorphism as on the left
\[ \begin{array}{lccr} {\Pi_{q}\Delta_p\Delta_g \iso \Delta_r\Pi_f} &&& {\delta_{p,q,r}:\Sigma_r\Pi_{q}\Delta_{p} \to \Pi_f\Sigma_g} \end{array} \]
which when you mate it by $\Sigma_r \ladj \Delta_r$ and $\Sigma_g \ladj \Delta_g$, gives a natural transformation $\delta_{p,q,r}$ as on the right in the previous display. When this is an isomorphism, it expresses a type of distributivity of ``sums'' over ``products'', and so the following proposition explains why we use the terminology \emph{distributivity} pullback.
\begin{prop}\label{prop:dist-pb}
Let $f$ be an exponentiable map in a category $\ca E$ with pullbacks. Then $(p,q,r)$ is a distributivity pullback around $(f,g)$ if and only if $\delta_{p,q,r}$ is an isomorphism.
\end{prop}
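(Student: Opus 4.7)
I will prove the two implications separately, with the ``only if'' direction carrying the technical weight.

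For the ``if'' direction, assume $\delta_{p,q,r}$ is a natural isomorphism and evaluate at the terminal object $1_Z \in \ca E/Z$. Because right adjoints preserve terminal objects, $\Delta_p(1_Z) = 1_X$ and $\Pi_q(1_X) = 1_Y$, giving $\Sigma_r \Pi_q \Delta_p(1_Z) = r$ in $\ca E/B$, while $\Pi_f\Sigma_g(1_Z) = \Pi_f g$; so $\delta_{p,q,r}(1_Z)$ is an isomorphism $r \iso \Pi_f g$. Transposing along $\Delta_f \ladj \Pi_f$ and unwinding the mate definition of $\delta$ recovers exactly the classifying morphism $(p,q,r) \to (\varepsilon_{f,g}^{(2)},\varepsilon_{f,\Pi_f g}^{(1)},\Pi_f g)$ in $\tn{PB}(f,g)$. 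An isomorphism between these pullbacks around exhibits $(p,q,r)$ as also terminal, hence a distributivity pullback.

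For the ``only if'' direction, assume $(p,q,r)$ is a distributivity pullback around $(f,g)$, fix $h:W \to Z$, and plan to construct an explicit distributivity pullback around $(f,gh)$ whose right column is $\Sigma_r\Pi_q\Delta_p h$. First form $W_1 := W \times_Z X$ with projections $p_1:W_1 \to W$ and $h_1:W_1 \to X$, so $h_1 = \Delta_p h$. Because $q$ is the pullback of the exponentiable map $f$, it is itself exponentiable, so the distributivity pullback around $(q,h_1)$ exists; call it $(h_2:W_2 \to W_1,\,q_2:W_2 \to Y_2,\,r_2:Y_2 \to Y)$, so $r_2 = \Pi_q\Delta_p h$. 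I claim $(p_1h_2,\,q_2,\,rr_2)$ is a distributivity pullback around $(f,gh)$. Its outer square is a pullback by pasting the two constituent outer pullback squares, using $hp_1 = p h_1$. For terminality, take any pullback around $(f,gh)$ of the form $(p',q',r')$; then $(hp',q',r')$ is a pullback around $(f,g)$, and terminality of $(p,q,r)$ supplies unique $\sigma:X' \to X$ and $\tau:Y' \to Y$ with $p\sigma = hp'$, $q\sigma = \tau q'$ and $r\tau = r'$. Pasting the outer pullback square of $(p',q',r')$ as the composite of two stacked squares whose right square is the outer pullback of $(p,q,r)$, pullback cancellation shows the left square with top $\sigma$, bottom $\tau$, sides $q'$ and $q$ is itself a pullback. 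The pair $(p',\sigma)$ factors uniquely through $W_1$ as $\sigma':X' \to W_1$ with $p_1\sigma' = p'$ and $h_1\sigma' = \sigma$, and $(\sigma',q',\tau)$ is then a pullback around $(q,h_1)$; terminality of $(h_2,q_2,r_2)$ finally provides the unique morphism $(p',q',r') \to (p_1h_2,q_2,rr_2)$, completing the verification.

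By the uniqueness of distributivity pullbacks, the construction yields an isomorphism $\Sigma_r\Pi_q\Delta_p h \iso \Pi_f(gh) = \Pi_f\Sigma_g h$ in $\ca E/B$, natural in $h$. The remaining step is to identify this isomorphism with the component $\delta_{p,q,r}(h)$, and I expect this to be the principal obstacle: the diagrammatic part of the argument is routine, but matching the map classified by our constructed pullback around against the mate of the Beck--Chevalley iso $\Pi_q\Delta_p\Delta_g \iso \Delta_r\Pi_f$ requires patiently tracking the unit and counit of $\Sigma_g \ladj \Delta_g$ and $\Sigma_r \ladj \Delta_r$ through the definition of $\delta$, which is not conceptually difficult but demands some care.
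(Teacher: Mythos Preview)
Your ``if'' direction is essentially the paper's argument: both evaluate $\delta_{p,q,r}$ at $1_Z$ and identify that component with the map $e:Y \to E$ into the canonical distributivity pullback $(\varepsilon_{f,g}^{(2)},\varepsilon_{f,\Pi_fg}^{(1)},\Pi_fg)$. One small elision: you only exhibit $e$ as an isomorphism, not the full morphism $(d,e)$ in $\tn{PB}(f,g)$; but since the square relating $d$ and $e$ is a pullback, $e$ invertible forces $d$ invertible, so this is harmless.

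Your ``only if'' direction is correct in outline but takes a genuinely different and much heavier route than the paper. You build, for each $h:W \to Z$, an explicit distributivity pullback around $(f,gh)$ with right leg $\Sigma_r\Pi_q\Delta_p(h)$ by pasting the distributivity pullback around $(q,\Delta_p h)$ onto the given one around $(f,g)$, and then compare with $\Pi_f\Sigma_g(h)$ componentwise. This works, but as you yourself flag, it leaves you with the chore of identifying the resulting isomorphism with the mate $\delta_{p,q,r}(h)$ for every $h$. The paper avoids all of this with one observation you are missing: since the adjunctions $\Sigma_r \ladj \Delta_r$ and $\Sigma_g \ladj \Delta_g$ have cartesian units and counits, the mate $\delta_{p,q,r}$ is a \emph{cartesian} natural transformation, hence invertible if and only if its component at the terminal object $1_Z$ is invertible. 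That single component is exactly $e$, and $e$ is invertible precisely when $(p,q,r)$ is terminal in $\tn{PB}(f,g)$. So both directions collapse to the same one-line check, and the identification problem you anticipate never arises. Your construction does buy something, namely a direct elementary witness that $\Sigma_r\Pi_q\Delta_p$ computes $\Pi_f\Sigma_g$ via iterated distributivity pullbacks; but for the proposition as stated, the cartesianness argument is both shorter and complete.
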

\begin{proof}
Since $(\varepsilon_{f,g}^{(2)}, \varepsilon_{f,\Pi_fg}^{(1)},\Pi_fg)$ is terminal in $\tn{PB}(f,g)$, one has unique morphisms $d$ and $e$ fitting into a commutative diagram
\[ \xygraph{!{0;(1.5,0):(0,.5)::} {Z}="d" [ru] {X}="me" [r] {Y}="e" [dr] {B}="c" [dl] {E}="e2" [l] {D}="me2" "d":@{<-}"me"^-{p}:"e"^-{q}:"c"^-{r} "d":@{<-}"me2"_-{\varepsilon^{(2)}_{f,g}}:"e2"_-{\varepsilon^{(1)}_{f,\Pi_fg}}:"c"_-{\Pi_fg} "me":"me2"_-{d} "e":"e2"^-{e} "me" [d(.6)r(.15)] "me":@{}"e2"|*{\scriptstyle{pb}}} \]
in which the middle square is a pullback by the elementary properties of pullbacks. Thus $(p,q,r)$ is a distributivity pullback if and only if $e$ is an isomorphism. Since the adjunctions $\Sigma_r \ladj \Delta_r$ and $\Sigma_g \ladj \Delta_g$ are cartesian, $\delta_{p,q,r}$ is cartesian, and so it is an isomorphism if and only if its component at $1_Z \in \ca E/Z$ is an isomorphism. Since $\Delta_p(1_Z)=1_X$ and $\Pi_{q}(1_{X})=1_{Y}$ one may easily witness directly that $(\delta_{p,q,r})_{1_X} = e$.
\end{proof}
When manipulating pullbacks in a general category, one uses the ``elementary fact'' that given a commutative diagram of the form
\[ \xygraph{{A}="tl" [r] {B}="tm" [r] {C}="tr" [d] {F}="br" [l] {E}="bm" [l] {D}="bl" "tl":"tm"(:"tr":"br":@{<-}"bm":@{<-}"bl":@{<-}"tl",:"bm") "tm" [d(.5)r(.5)] {\scriptstyle{pb}}} \]
then the front square is a pullback if and only if the composite square is. In the remainder of this section we identify three elementary facts about distributivity pullbacks.
\begin{lem}\label{lem:distpb-composition-cancellation}
(Composition/cancellation) Given a diagram of the form
\[ \xygraph{!{0;(1.5,0):(0,.5)::} {B_6}="p11" [d] {B_2}="p21" [d] {B}="p31" [d] {X}="p41" [r] {Y}="p42" [u(2)] {B_3}="p22" [u] {B_4}="p12" [r] {B_5}="p13" [d(3)] {Z}="p43"
"p11":"p12"^-{h_9}:"p13"^-{h_6}:"p43"^-{h_7}:@{<-}"p42"^-{g}:@{<-}"p41"^-{f}:@{<-}"p31"^-{h}:@{<-}"p21"^-{h_2}:@{<-}"p11"^-{h_8}
"p22"(:@{<-}"p21"^-{h_3},:@{<-}"p12"_-{h_5},:"p42"^-{h_4})
"p11" [d(.5)r(.5)] {\scriptstyle{pb}} "p31" [r(.5)] {\scriptstyle{dpb}} "p22" [d(.5)r(.5)] {\scriptstyle{pb}}} \]
in any category with pullbacks, then the right-most pullback is a distributivity pullback around $(g,h_4)$ if and only if the composite diagram is a distributivity pullback around $(gf,h)$.
\end{lem}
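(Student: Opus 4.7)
The plan is to establish both implications of the iff using the characterization of distributivity pullbacks as terminal objects in the relevant $\tn{PB}(-,-)$ categories. I interpret the ``composite diagram'' as the pb-around $(gf,h)$ obtained by tracing the outer boundary, namely $(h_2h_8,\,h_6h_9,\,h_7)$ with apex $B_6$ and co-apex $B_5$; the pasting lemma applied to the three labelled squares shows this is indeed a pb-around of $(gf,h)$.

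For the forward direction, given the right-most pullback is a dpb around $(g,h_4)$ and an arbitrary $(p,q,r)\in\tn{PB}(gf,h)$ with apex $X'$ and co-apex $Y'$, I first decompose the outer pullback through $Y$: pulling $r$ back along $g$ yields $\bar Y:=Y\times_Z Y'$ with projections $\bar q':\bar Y\to Y'$ and $\bar r:\bar Y\to Y$, and the pullback universal property then produces a unique $\bar q:X'\to\bar Y$ making $(p,\bar q,\bar r)\in\tn{PB}(f,h)$. Applying the middle dpb yields $(\sigma,\tau)$ with $\sigma:X'\to B_2$, $\tau:\bar Y\to B_3$, after which the triple $(\tau,\bar q',r)$ sits in $\tn{PB}(g,h_4)$, and the assumed right dpb produces $(\mu,\nu)$ with $\mu:\bar Y\to B_4$, $\nu:Y'\to B_5$. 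I set $t:=\nu$ and use the top-left pullback $B_6=B_2\times_{B_3}B_4$ to produce $s:X'\to B_6$ with $h_8s=\sigma$, $h_9s=\mu\bar q$ (compatible because $h_3\sigma=\tau\bar q=h_5\mu\bar q$). Uniqueness of $(s,t)$ follows by reversing this construction and appealing to the terminality of the middle and right dpbs, with the pullback property of $B_4=B_5\times_Z Y$ used to recover $\mu$ from the data of any competing morphism.

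For the converse direction, given the composite is a dpb and an arbitrary $(p'',q'',r'')\in\tn{PB}(g,h_4)$ with apex $X'''$ and co-apex $Y'''$, I form $W:=B_2\times_{B_3}X'''$ as the pullback of $h_3$ along $p''$, with projections $\alpha:W\to X'''$ and $\beta:W\to B_2$. Pasting this with the middle dpb's outer pullback and with the outer pullback of $(p'',q'',r'')$ shows $(h_2\beta,\,q''\alpha,\,r'')\in\tn{PB}(gf,h)$. The assumed composite dpb then produces a unique morphism $(s,t)$ to $(h_2h_8,h_6h_9,h_7)$. I set $\nu:=t$ and define $\mu:X'''\to B_4$ using the pullback $B_4=B_5\times_Z Y$ via $h_6\mu=\nu q''$ and $h_4h_5\mu=h_4p''$; the required morphism is then $(\mu,\nu):(p'',q'',r'')\to(h_5,h_6,h_7)$, and its uniqueness follows from that of $(s,t)$ together with the pullback property of $B_4$.

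The main obstacle is verifying $h_5\mu=p''$ in the converse, since the pullback property of $B_4$ only pins down $h_4h_5\mu$. The strategy is to exhibit $(h_8s,h_5\mu)$ as a morphism $(h_2\beta,\alpha,h_4p'')\to(h_2,h_3,h_4)$ in $\tn{PB}(f,h)$. The key compatibility $h_3h_8s=(h_5\mu)\alpha$ reduces via the top-left pullback to the identity $h_9s=\mu\alpha$, which itself follows from another application of the pullback property of $B_4$ (both sides have matching composites with $h_6$ and with $h_4h_5$, using the dpb morphism conditions on $(s,t)$ together with $h_4h_3=fhh_2$). Since $(\beta,p'')$ is also visibly a morphism $(h_2\beta,\alpha,h_4p'')\to(h_2,h_3,h_4)$, the terminality of the middle dpb then forces $(h_8s,h_5\mu)=(\beta,p'')$, yielding $h_5\mu=p''$ (and as a bonus $h_8s=\beta$).
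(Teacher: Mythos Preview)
Your proof is correct and follows essentially the same route as the paper's: in both directions you use the same decomposition (factoring through the pullback of $r$ along $g$ in the forward direction, and pulling back along $h_3$ in the converse), invoke the middle and right distributivity pullbacks in the same order, and assemble the comparison map into $B_6$ via the top-left pullback. Your explicit treatment of the verification $h_5\mu=p''$ via uniqueness in $\tn{PB}(f,h)$ is exactly the argument the paper gestures at with its appeal to ``the uniqueness part of the universal property of the left distributivity pullback,'' only stated more carefully.
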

\begin{proof}
Let us suppose that right-most pullback is a distributivity pullback, and that $C_1$, $C_2$, $k_1$, $k_2$ and $k_3$ as in
\[ \xygraph{!{0;(1.5,0):(0,.5)::} {B_6}="p11" [d] {B_2}="p21" [d] {B}="p31" [d] {X}="p41" [r] {Y}="p42" [u(2)] {B_3}="p22" [u] {B_4}="p12" [r] {B_5}="p13" [d(3)] {Z}="p43"
"p11" [u(2)l] {C_1}="p01" [r(4)] {C_2}="p03" "p12" [l(.5)u(1.35)] {C_3}="p02"
"p11":"p12"^-{h_9}:"p13"^-{h_6}:"p43"|-{h_7}:@{<-}"p42"^-{g}:@{<-}"p41"^-{f}:@{<-}"p31"^-{h}:@{<-}"p21"^-{h_2}:@{<-}"p11"^-{h_8}
"p22"(:@{<-}"p21"^-{h_3},:@{<-}"p12"_-{h_5},:"p42"^-{h_4})
"p31":@{<-}@/^{2pc}/"p01"^-{k_1}:"p03"^-{k_2}:"p43"^-{k_3}
"p02"(:@/_{1pc}/@{.>}"p42"^(.7){k_4},:@{.>}"p03"|(.4){k_5},:@{<.}"p01"|-{k_6})
"p01":@{.>}@/_{1pc}/"p21"|-{k_7} "p02":@{.>}"p22"|(.3){k_8} "p02":@{.>}@/^{.5pc}/"p12"^-{k_9} "p03":@{.>}"p13"_-{k_{10}} "p01":@{.>}"p11"|-{k_{11}}
"p11" [d(.5)r(.5)] {\scriptstyle{pb}} "p31" [r(.5)] {\scriptstyle{dpb}} "p22" [d(.5)r(.5)] {\scriptstyle{dpb}}} \]
are given such that the square with boundary $(hk_1,gf,k_3,k_2)$ is a pullback. Then we must exhibit $r:C_1 \to B_6$ and $s:C_2 \to B_5$ unique such that $h_2h_8r=k_1$, $h_6h_9r=sk_2$ and $h_7s=k_3$. Form $C_3$, $k_4$ and $k_5$ by taking the pullback of $k_3$ along $g$, and then $k_6$ is unique such that $k_5k_6=k_2$ and $k_4k_6=fhk_1$. Clearly the square with boundary $(hk_1,f,k_4,k_6)$ is a pullback around $(f,h)$. From the universal property of the left-most distributivity pullback, one has $k_7$ and $k_8$ as shown unique such that $k_1=h_2k_7$, $h_3k_7=k_8k_6$ and $h_4k_8=k_4$. From the universal property of the right-most distributivity pullback, one has $k_9$ and $k_{10}$ as shown unique such that $k_8=h_5k_9$, $h_6k_9=k_{10}k_5$ and $h_7k_{10}=k_3$. Clearly $h_5k_9k_6=h_3k_7$ and so by the universal property of the top-left pullback square one has $k_{11}$ as shown unique such that $h_8k_{11}=k_7$ and $h_9k_{11}=k_9k_6$. Clearly $h_2h_8k_{11}=k_1$, $h_6h_9k_{11}=k_{10}k_2$ and $h_7k_{10}=k_3$ and so we have established the existence of maps $r$ and $s$ with the required properties.

As for uniqueness, let us suppose now that $r:C_1 \to B_6$ and $s:C_2 \to B_5$ are given such that $h_2h_8r=k_1$, $h_6h_9r=sk_2$ and $h_7s=k_3$. We must verify that $r=k_{11}$ and $s=k_{10}$. Since the right-most distributivity pullback is in particular a pullback, one has $k_9':C_3 \to B_4$ unique such that $h_4h_5k_9'=k_4$ and $h_6k_9'=sk_5$. Since $(h_4h_5,h_6)$ are jointly monic, and clearly $h_4h_5h_9r=h_4h_5k_9'k_6$ and $h_6h_9r=h_6k_9'k_6$, we have $h_9r=k_9'k_6$. By the universal property of the left-most distributivity pullback, it follows that $h_5k_9'=k_8$ and $h_8r=k_7$. Thus by the universal property of the left-most distributivity pullback, it follows that $k_9=k_9'$ and $k_{10}=s$. Since $(h_8,h_9)$ are jointly monic, $h_8k_{11}=k_7=h_8r$ and $h_9k_{11}=k_{9}k_6=h_9r$, we have $r=k_{11}$.

Conversely, suppose that the composite diagram is a distributivity pullback around $(gf,h)$, and that $C_1$, $C_2$, $k_1$, $k_2$ and $k_3$ as in
\[ \xygraph{!{0;(1.5,0):(0,.5)::} {B_6}="p11" [d] {B_2}="p21" [d] {B}="p31" [d] {X}="p41" [r] {Y}="p42" [u(2)] {B_3}="p22" [u] {B_4}="p12" [r] {B_5}="p13" [d(3)] {Z}="p43"
"p11" [u(2)l] {C_3}="p01" [r(4)] {C_2}="p03" "p12" [l(.5)u(2)] {C_1}="p02"
"p11":"p12"^(.35){h_9}|(.61)*{\hole}:"p13"^-{h_6}:"p43"|-{h_7}:@{<-}"p42"^-{g}:@{<-}"p41"^-{f}:@{<-}"p31"^-{h}:@{<-}"p21"^-{h_2}:@{<-}"p11"^-{h_8}
"p22"(:@{<-}"p21"^-{h_3},:@{<-}"p12"_-{h_5},:"p42"^-{h_4})
"p03":"p43"^-{k_3}
"p02"(:"p03"^(.4){k_2},:@{<.}"p01"_-{k_5})
"p01":@{.>}@/_{1pc}/"p21"_-{k_4} "p02":@/_{1pc}/"p22"_(.3){k_1} "p02":@{.>}@/^{.5pc}/"p12"^-{k_8} "p03":@{.>}"p13"_-{k_{7}} "p01":@{.>}"p11"^-{k_{6}}
"p11" [d(.5)r(.5)] {\scriptstyle{pb}} "p31" [r(.5)] {\scriptstyle{dpb}} "p22" [d(.5)r(.5)] {\scriptstyle{pb}}} \]
are given such that the square with boundary $(h_4k_1,g,k_3,k_2)$ is a pullback. We must give $r:C_1 \to B_4$ and $s:C_2 \to B_5$ unique such that $k_1=h_5r$, $h_6r=sk_2$ and $h_7s=k_3$. Pullback $k_1$ along $h_3$ to produce $C_3$, $k_4$ and $k_5$. This makes the square with boundary $(hh_2k_4,gf,k_3,k_2k_5)$ a pullback around $(gf,h)$. Thus one has $k_6$ and $k_7$ as shown unique such that $h_8k_6=k_4$, $h_6h_9k_6=k_7k_2k_1$ and $k_7h_7=k_3$. By universal property of the right pullback and since $gh_4k_1=h_7k_7k_2$, one has $k_8$ as shown unique such that $h_5k_8=k_1$ and $h_6k_8=k_7k_2$. By the uniquness part of the universal property of the left distributivity pullback, it follows that $h_5k_8=k_1$, and so we have established the existence of maps $r$ and $s$ with the required properties.

As for uniqueness let us suppose that we are given $r:C_1 \to B_4$ and $s:C_2 \to B_5$ such that $k_1=h_5r$, $h_6r=sk_2$ and $h_7s=k_3$. We must verify that $r=k_8$ and $s=k_7$. By the universal property of the top-left pullback one has $k_6'$ unique such that $h_8k_6'=k_4$ and $h_9k_6'=rk_5$. By the uniquness part of the universal property of the left distributivity pullback, it follows that $h_8k_6=h_8k_6'$ and $h_5k_8=h_5r$. Thus by the uniquness part of the universal property of the composite distributivity pullback, it follows that $k_6=k_6'$ and $s=k_7$. Since $(h_4h_5,h_6)$ are jointly monic, it follows that $r=k_8$.
\end{proof}
\begin{lem}\label{lem:distpb-cube}
(The cube lemma). Given a diagram of the form
\[ \xygraph{!{0;(1.5,0):(0,.4)::} {A_2}="itl" ([ul] {A_3}="mtl" [ul] {A_1}="otl") [r] {B_2}="itr" ([u(2)r(2)] {B_1}="otr") [d] {D_2}="ibr" ([d(2)r(2)] {D_1}="obr")  [l] {C_2}="ibl" ([dl] {C_3}="mbl" [dl] {C_1}="obl") "otl":"otr"^-{f_1}:"obr"^-{k_1}:@{<-}"obl"^-{g_1}:@{<-}"otl"^-{h_1} "itl":"itr"^-{f_2}:"ibr"^-{k_2}:@{<-}"ibl"^-{g_2}:@{<-}"itl"^-{h_2} "mtl":"mbl"_-{h_3} "otl":"mtl"^-{d_1}:"itl"^-{d_2} "obl":"mbl"_-{d_3}:"ibl"_-{d_4} "otr":"itr"_-{d_5} "obr":"ibr"^-{d_6} "itl":@{}"ibr"|-*{\scriptstyle{pb}}="c" "c"  ([l] {\scriptstyle{pb}} [l] {(1)}, [r(1.5)] {(2)}) "itl" [r(.5)u] {(3)} "ibl" [r(.5)d] {\scriptstyle{dpb}}} \]
in any category with pullbacks, in which regions (1) and (2) commute, region (3) is a pullback around $(f_2,d_2)$, the square with boundary $(f_1,k_1,g_1,h_1)$ is a pullback and the bottom distributivity pullback is around $(g_2,d_4)$. Then regions (1) and (2) are pullbacks if and only if region (3) is a distributivity pullback around $(f_2,d_2)$.
\end{lem}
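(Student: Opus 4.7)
Both directions are verified by direct manipulation of universal properties, combined with the standard pullback pasting and cancellation lemmas.

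For the direction $(\Rightarrow)$, I would assume regions $(1)$ and $(2)$ are pullbacks and show that $(d_1,f_1,d_5)$ is terminal in $\tn{PB}(f_2,d_2)$. Given $(p,q,r)\in\tn{PB}(f_2,d_2)$, the first key step is to exhibit $(h_3 p, q, k_2 r)$ as an object of $\tn{PB}(g_2,d_4)$: its boundary square has top edge $d_4 h_3 p = h_2 d_2 p$ (using region $(2)$'s commutativity) and bottom-right vertex $k_2 r q = k_2 f_2 d_2 p = g_2 h_2 d_2 p$ (using the bottom face's commutativity and that $(p,q,r)$ is a pullback around), and a direct diagram chase shows it is a pullback by pasting the $(p,q,r)$-square with the bottom face. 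The terminal property of the bottom distributivity pullback then provides a canonical factorization. To lift this data to a morphism $(s\colon X\to A_1, t\colon Y\to B_1)$ over $(d_1,f_1,d_5)$, I would paste regions $(1)$ and $(2)$ into a single pullback exhibiting $A_1$ as $C_1\times_{C_2}A_2$, and then use this together with the top face pullback to recover $s$ and $t$ from the factored data. Uniqueness is tracked through each universal property used.

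For the direction $(\Leftarrow)$, I assume region $(3)$ is a distributivity pullback and prove each of $(1)$ and $(2)$ is an ordinary pullback. To check region $(2)$, I would take an arbitrary test cone $u\colon T\to A_2$, $v\colon T\to C_3$ with $h_2 u=d_4 v$ and construct a suitable pullback around $(f_2,d_2)$ parameterised by $T$, using $v$ and $u$ together with the bottom dpb's universal property to supply the missing pullback-around data. Applying the dpb terminality of region $(3)$ to this new object then yields a unique morphism to $(d_1,f_1,d_5)$, whose components compose with $d_1$ and the cube data to produce the required lift $w\colon T\to A_3$ with $d_2 w=u$ and $h_3 w=v$; uniqueness is inherited from the terminal map. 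Region $(1)$ is then handled analogously, using the top face pullback and the already-established region $(2)$ via the pullback pasting lemma.

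The main obstacle is the forward direction's reassembly step: once the bottom dpb has absorbed the extra data coming from $(h_3 p, q, k_2 r)$, it delivers a factorization through the bottom-dpb vertices rather than directly into $A_1$ and $B_1$, so one must carefully combine this factorization with the pasted pullback from $(1)$, $(2)$ and with the top face pullback to actually produce the morphisms $s$ and $t$, and simultaneously verify that the resulting pair is unique with the required compatibilities. The reverse direction is technically easier but requires a careful choice of the auxiliary pullback-around in $\tn{PB}(f_2,d_2)$ built from $T$-data.
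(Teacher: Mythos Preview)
Your forward direction is essentially the paper's argument: form $(h_3 p, q, k_2 r)\in\tn{PB}(g_2,d_4)$, factor through the bottom distributivity pullback to obtain $s_2:X\to C_1$ and $t_2:Y\to D_1$, and then lift. One wording issue: you cannot ``paste regions $(1)$ and $(2)$'' to exhibit $A_1\cong C_1\times_{C_2}A_2$, since region~$(2)$ is the right face $(B_1,B_2,D_2,D_1)$ and has nothing to do with $A_1$. What actually gives $A_1\cong C_1\times_{C_2}A_2$ is pasting region~$(1)$ with the \emph{inner-left} pullback $(A_3,A_2,C_2,C_3)$. In practice the paper simply uses $(1)$ and $(2)$ separately: from $(s_2,p)$ and region~$(1)$ one induces $s:X\to A_1$; from $(t_2,r)$ and region~$(2)$ one induces $t:Y\to B_1$. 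So your plan is fine once this is straightened out.

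The reverse direction has a genuine gap. Region~$(2)$ is the square with boundary $(d_5,k_2,d_6,k_1)$, so a test cone for it consists of $s:Z\to B_2$ and $t:Z\to D_1$ with $k_2 s = d_6 t$, and the required lift lands in $B_1$. The cone you wrote down, $u:T\to A_2$ and $v:T\to C_3$ with $h_2 u = d_4 v$, is a test cone for the \emph{inner-left} square $(A_3,A_2,C_2,C_3)$, which is already a pullback by hypothesis; the lift $w:T\to A_3$ with $d_2 w=u$, $h_3 w=v$ that you aim to produce is exactly that hypothesis, not the conclusion. So as written your argument proves nothing about region~$(2)$. The paper's route is: given $s,t$ as above, pull $s$ back along $f_2$ to obtain $P$, $u:P\to A_2$, $v:P\to Z$; use the bottom dpb merely as a pullback to get $w:P\to C_1$; use the inner-left pullback to get $x:P\to A_3$; observe that $(x,v,s)\in\tn{PB}(f_2,d_2)$; then apply the terminality of region~$(3)$ to produce $y:P\to A_1$ and the desired $z:Z\to B_1$, with $t=k_1 z$ following from the uniqueness clause of the bottom dpb. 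Region~$(1)$ then comes for free by pullback cancellation once $(2)$ is established, so your ordering (do $(2)$ first, deduce $(1)$) is right, but the content for $(2)$ needs to be redone with the correct cone.
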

\begin{proof}
Let us suppose that (1) and (2) are pullbacks and $p$, $q$ and $r$ are given as in
\[ \xygraph{!{0;(1.5,0):(0,.4)::} {A_2}="itl" ([ul] {A_3}="mtl" [ul] {A_1}="otl") [r] {B_2}="itr" ([u(2)r(2)] {B_1}="otr") [d] {D_2}="ibr" ([d(2)r(2)] {D_1}="obr")  [l] {C_2}="ibl" ([dl] {C_3}="mbl" [dl] {C_1}="obl") "otl":"otr"^-{f_1}|(.25)*{\hole}|(.665)*{\hole}:"obr"^-{k_1}:@{<-}"obl"^-{g_1}:@{<-}"otl"^-{h_1} "itl":"itr"^-{f_2}:"ibr"^-{k_2}:@{<-}"ibl"^-{g_2}:@{<-}"itl"^-{h_2} "mtl":"mbl"_-{h_3} "otl":"mtl"^-{d_1}:"itl"^-{d_2} "obl":"mbl"_-{d_3}:"ibl"_-{d_4} "otr":"itr"_-{d_5} "obr":"ibr"^-{d_6} "itl":@{}"ibr"|-*{\scriptstyle{pb}}="c" "c"  ([l] {\scriptstyle{pb}} [l] {\scriptstyle{pb}}, [r(1.5)] {\scriptstyle{pb}}) "itl" [r(.5)u] {\scriptstyle{pb}} "ibl" [r(.5)d] {\scriptstyle{dpb}}
"mtl" [u(2)r(.5)] {X}="x" [r(2)] {Y}="y" "mtl":@{<-}"x"_(.75){p}:"y"^-{q}:"itr"^-{r}
"x":@{.>}@/_{1.5pc}/"obl"_-{s_2} "y":@{.>}"obr"^-{t_2} "x":@{.>}"otl"_-{s} "y":@{.>}"otr"_-{t}} \]
such that the square with boundary $(q,r,f_2,d_2p)$ is a pullback. Then one can use the bottom distributivity pullback to induce $s_2$ and $t_2$ as shown, and then the pullbacks (1) and (2) to induce $s$ and $t$, and these clearly satisfy $d_1s=p$, $f_1s=tq$ and $r=d_5t$. On the other hand given $s':X \to A_1$ and $t':Y \to B_1$ satisfying these equations, define $s_2'=h_1s'$ and $t_2'=k_1t'$. But then by the uniqueness part of the universal property of the bottom distributivity pullback it follows that $s_2'=s_2$ and $t_2'=t_2$, and from the uniqueness parts of the universal properties of the pullbacks (1) and (2), it follows that $s=s'$ and $t=t'$, thereby verifying that $s$ and $t$ are unique satisfying the aforementioned equations.

For the converse suppose that (3) is a distributivity pullback. Note that (2) being a pullback implies that (1) is by elementary properties of pullbacks, so we must show that (2) is a pullback. To that end consider $s$ and $t$ as in
\[ \xygraph{!{0;(1.5,0):(0,.4)::} {A_2}="itl" ([ul] {A_3}="mtl" [ul] {A_1}="otl") [r] {B_2}="itr" ([u(2)r(2)] {B_1}="otr") [d] {D_2}="ibr" ([d(2)r(2)] {D_1}="obr")  [l] {C_2}="ibl" ([dl] {C_3}="mbl" [dl] {C_1}="obl") "otl":"otr"^-{f_1}|(.33)*{\hole}|(.665)*{\hole}|(.75)*{\hole}:"obr"^-{k_1}:@{<-}"obl"^-{g_1}:@{<-}"otl"^-{h_1} "itl":"itr"^-{f_2}:"ibr"^-{k_2}:@{<-}"ibl"^-{g_2}:@{<-}"itl"^-{h_2} "mtl":"mbl"_-{h_3} "otl":"mtl"^-{d_1}:"itl"^-{d_2} "obl":"mbl"_-{d_3}:"ibl"_-{d_4} "otr":"itr"_(.7){d_5}|(.5)*{\hole} "obr":"ibr"^-{d_6} "itl":@{}"ibr"|-*{\scriptstyle{pb}}="c" "c"  ([l] {\scriptstyle{pb}} [l] {=}, [r(1.5)] {=}) "itl" [r(.5)u] {\scriptstyle{dpb}} "ibl" [r(.5)d] {\scriptstyle{dpb}}
"mtl" [u(2)r(.5)] {P}="p" [r(2)] {Z}="z" "z"(:"itr"^-{s},:"obr"^-{t}) "p"(:"itl"^-{u},:"z"^-{v})
"p":@{.>}@/_{1.5pc}/"obl"_-{w} "p":@{.>}"mtl"^(.65){x} "p":@{.>}"otl"_-{y} "z":@{.>}"otr"^-{z}} \]
such that $k_2s=d_6t$, and then pullback $s$ and $f_2$ to produce $P$, $u$ and $v$. Using the fact that the bottom distributivity pullback is a mere pullback, one has $w$ unique such that $d_4d_3w=h_2u$ and $g_1w=tv$. Using the inner left pullback, one has $x$ unique such that $h_3x=d_3w$ and $d_2x=u$. Using the distributivity pullback (3), one has $y$ and $z$ unique such that $d_1y=x$, $f_1y=zv$ and $s=d_5z$. By the uniqueness part of the universal property of the bottom distributivity pullback, it follows that $t=k_1z$. Thus we have constructed $z$ satisfying $s=d_5z$ and $t=k_1z$. On the other hand given $z':Z \to B_1$ such that $s=d_5z'$ and $t=k_1z'$, one has $y':P \to A_1$ unique such that $d_2d_1y=u$ and $f_1y=zv$, using the fact that the top distributivity pullback is a mere pullback. Then from the uniqueness part of the universal property of that distributivity pullback, it follows that $y=y'$ and $z=z'$. Thus as required $z$ is unique satisfying $s=d_5z$ and $t=k_1z$.
\end{proof}
\begin{lem}\label{lem:distpb-sections}
(Sections of distributivity pullbacks). Let
\[ \xygraph{{D}="tl" [r] {A}="tm" [r] {B}="tr" [d] {C}="br" [l(2)] {E}="bl" "tl":"tm"^-{p}:"tr"^-{g}:"br"^-{f}:@{<-}"bl"^-{r}:@{<-}"tl"^-{q} "tl":@{}"br"|*{\scriptstyle{dpb}}} \]
be a distributivity pullback around $(f,g)$ in any category with pullbacks. Three maps
\[ \begin{array}{lcccr} {s_1:B \to A} && {s_2:B \to D} && {s_3:C \to E} \end{array} \]
which are sections of $g$, $gp$ and $r$ respectively, and are natural in the sense that $s_1=ps_2$ and $qs_2=s_3f$, are determined uniquely by the either of the following: (1) the section $s_1$; or (2) the section $s_3$.
\end{lem}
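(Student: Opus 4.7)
The plan is to prove (1) by invoking the terminality of $(p,q,r)$ in $\tn{PB}(f,g)$, and to prove (2) by invoking the fact that the inner square $(gp,f,r,q)$ is an ordinary pullback, exhibiting $(gp,q):D\to B\times_C E$ as an isomorphism.

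For (1), given a section $s_1:B\to A$ of $g$, I would exhibit $(s_1,f,1_C)$, with vertex objects $B$ and $C$, as an object of $\tn{PB}(f,g)$. Its associated outer square $(gs_1,f,1_C,f)=(1_B,f,1_C,f)$ commutes trivially and is a pullback because $1_C$ is invertible. Terminality of $(p,q,r)$ then supplies unique maps $s_2:B\to D$ and $s_3:C\to E$ satisfying $ps_2=s_1$, $qs_2=s_3f$, and $rs_3=1_C$, and one immediately checks $gps_2=gs_1=1_B$, so $s_2$ is automatically a section of $gp$. Any other triple $(s_1,s_2',s_3')$ meeting the conditions of the lemma assembles into a morphism $(s_1,f,1_C)\to(p,q,r)$ in $\tn{PB}(f,g)$, whence $(s_2',s_3')=(s_2,s_3)$ by the uniqueness part of the terminal universal property.

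For (2), given a section $s_3:C\to E$ of $r$, the pair of maps $1_B:B\to B$ and $s_3f:B\to E$ satisfies $f\cdot 1_B=f=rs_3f$, and hence factors uniquely through the inner pullback $D\cong B\times_C E$, producing a unique $s_2:B\to D$ with $gps_2=1_B$ and $qs_2=s_3f$. Setting $s_1:=ps_2$ then gives $gs_1=gps_2=1_B$ and $s_1=ps_2$ by construction, so $(s_1,s_2,s_3)$ satisfies all the stated conditions. Uniqueness is immediate: any compatible pair $(s_1',s_2')$ must have $(gps_2',qs_2')=(1_B,s_3f)$, which forces $s_2'=s_2$ by the pullback property, and then $s_1'=ps_2'=s_1$.

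I do not foresee a serious obstacle; each direction is a single application of the appropriate universal property. The only bookkeeping point is to match the three equations of the lemma with the morphism conditions in the definition of $\tn{PB}(f,g)$ when invoking terminality in (1).
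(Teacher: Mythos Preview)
Your proposal is correct and follows essentially the same route as the paper: for (1) you exhibit $(s_1,f,1_C)$ as an object of $\tn{PB}(f,g)$ and use terminality of $(p,q,r)$ to induce $s_2$ and $s_3$ uniquely, and for (2) you use the inner pullback square to induce $s_2$ from $(1_B,s_3f)$ and then set $s_1=ps_2$. The paper's proof is terser but makes exactly these two moves.
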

\begin{proof}
Given $s_1$ a section of $g$, induce $s_2$ and $s_3$ uniquely as shown:
\[ \xygraph{{D}="tl" [r] {A}="tm" [r] {B}="tr" [d] {C}="br" [l(2)] {E}="bl" "tl":"tm"^-{p}:"tr"^-{g}:"br"^-{f}:@{<-}"bl"^-{r}:@{<-}"tl"^-{q} "tl":@{}"br"|*{\scriptstyle{dpb}}
"tl" [l] {B}="tll" [d] {C}="bll" "tll"(:@/^{2pc}/"tm"^-{s_1},:"bll"_-{f}:@/_{2pc}/"br"_-{1})
"tll":@{.>}"tl"^-{s_2} "bll":@{.>}"bl"^-{s_3}} \]
using the universal property of the distributivity pullback. On the other hand given the section $s_3$, one induces $s_2$ using the fact that the distributivity pullback is a mere pullback, and then put $s_1=ps_2$.
\end{proof}
We often assume that in a given category $\ca E$ with pullbacks, some choice of all pullbacks, and of all existing distributivity pullbacks, has been fixed. Moreover we make the following harmless assumptions, for the sake of convenience, on these choices once they have been made. First we assume that the chosen pullback of an identity along any map is an identity. This ensures that $\Delta_{1_X}=1_{\ca E/X}$ and that $\Delta_f(1_B)=1_A$ for any $f:A \to B$. Similarly we assume that all diagrams of the form
\[ \xygraph{ {\xybox{\xygraph{{\bullet}="tl" [r] {\bullet}="tm" [r] {\bullet}="tr" [d] {\bullet}="br" [l(2)] {\bullet}="bl" "tl":"tm"^-{1}:"tr"^-{1}:"br"^-{f}:@{<-}"bl"^-{1}:@{<-}"tl"^-{f} "tl":@{}"br"|-*{\scriptstyle{dpb}}}}} [r(4)]
{\xybox{\xygraph{{\bullet}="tl" [r] {\bullet}="tm" [r] {\bullet}="tr" [d] {\bullet}="br" [l(2)] {\bullet}="bl" "tl":"tm"^-{1}:"tr"^-{g}:"br"^-{1}:@{<-}"bl"^-{g}:@{<-}"tl"^-{1} "tl":@{}"br"|-*{\scriptstyle{dpb}}}}}} \]
are among our chosen distributivity pullbacks. This has the effect of ensuring that $\Pi_f(1_A)=1_B$ for any exponentiable $f:A \to B$, and that $\Pi_{1_X}=1_{\ca E/X}$.

\section{Polynomials in categories}
\label{sec:Poly-in-Cats}

This section contains our general theory of polynomials and polynomial functors. In Section \ref{ssec:bicats-of-polys} we give an elementary account of the composition of polynomials, culminating in Theorem \ref{thm:bicat-of-polynomials}, in which polynomials in a category $\ca E$ with pullbacks are exhibited as the 1-cells of the bicategory $\Polyc {\ca E}$. Then in Section \ref{ssec:polynomial-functors}, we study the process of forming the associated polynomial functor, exhibiting this as the effect on 1-cells of the homomorphism $\PFun {\ca E} : \Polyc{\ca E} \to \CAT$ in Theorem \ref{thm:polnomial-functor-homomorphism}. At this generality, the homs of the bicategory $\Polyc {\ca E}$ have pullbacks, and the hom functors of $\PFun {\ca E}$ preserve them. This gives the sense in which the theory of polynomial functors could be iterated, and this is described in Section \ref{ssec:enriched-bicat-poly}. The organisation of this section has been chosen to facilitate its generalisation to the theory of polynomials in 2-categories, in Section \ref{ssec:2-categorical-version}.

\subsection{Bicategories of polynomials.}
\label{ssec:bicats-of-polys}
Let $\ca E$ be a category with pullbacks. In this section we give a direct description of a bicategory $\Polyc{\ca E}$, whose objects are those of $\ca E$, and whose one cells are polynomials in $\ca E$ in the following sense. For $X$, $Y$ in $\ca E$, a \emph{polynomial} $p$ from $X$ to $Y$ in $\ca E$ consists of three maps
\[ \xygraph{{X}="p1" [r] {A}="p2" [r] {B}="p3" [r] {Y}="p4" "p1":@{<-}"p2"^-{p_1}:"p3"^-{p_2}:"p4"^-{p_3}} \]
such that $p_2$ is exponentiable. Let $p$ and $q$ be polynomials in $\ca E$ from $X$ to $Y$. A \emph{cartesian morphism} $f:p \to q$ is a pair of maps $(f_0,f_1)$ fitting into a commutative diagram
\[ \xygraph{{X}="d" [ru] {A}="me" [r] {B}="e" [dr] {Y}="c" [dl] {B'}="e2" [l] {A'}="me2" "d":@{<-}"me"^-{p_1}:"e"^-{p_2}:"c"^-{p_3} "d":@{<-}"me2"_-{q_1}:"e2"_-{q_2}:"c"_-{q_3} "me":"me2"_-{f_0} "e":"e2"^-{f_1} "me":@{}"e2"|-*{\scriptstyle{pb}}} \]
We call $f_0$ the \emph{$0$-component} of $f$, and $f_1$ the \emph{$1$-component} of $f$. With composition inherited in the evident way from $\ca E$, one has a category $\Polyc{\ca E}(X,Y)$ of polynomials from $X$ to $Y$ and cartesian morphisms between them. These are the homs of our bicategory $\Polyc{\ca E}$.

In order to describe the bicategorical composition of polynomials, we introduce the concept of a \emph{subdivided composite} of a given composable sequence of polynomials. This enables us to give a direct description of $n$-ary composition for $\Polyc{\ca E}$, and then to describe the sense in which coherence for this bicategory ``follows from universal properties''.

Consider a composable sequence of polynomials in $\ca E$ of length $n$, that is to say, polynomials
\[ \xygraph{!{0;(1.5,0):(0,1)::} {X_{i{-}1}}="p1" [r] {A_i}="p2" [r] {B_i}="p3" [r] {X_i}="p4" "p1":@{<-}"p2"^-{p_{i1}}:"p3"^-{p_{i2}}:"p4"^-{p_{i3}}} \]
in $\ca E$, where $0 < i \leq n$. We denote such a sequence as $(p_i)_{1{\leq}i{\leq}n}$, or more briefly as $(p_i)_i$. 
\begin{defn}\label{def:subdivided-composite}
Let $(p_i)_{1{\leq}i{\leq}n}$ be a composable sequence of polynomials of length $n$. A \emph{subdivided composite} over $(p_i)_i$ consists of objects $(Y_0,...,Y_n)$, morphisms
\[ \begin{array}{lcccr} {q_1:Y_0 \to X_0} && {q_{2,i}:Y_{i{-}1} \to Y_i} && {q_3:Y_n \to X_n} \end{array} \]
for $0 < i \leq n$, and morphisms
\[ \begin{array}{lccr} {r_i:Y_{i{-}1} \to A_i} &&& {s_i:Y_i \to B_i} \end{array} \]
for $0 < i \leq n$, such that $p_{11}r_1=q_1$, $p_{n3}s_n=q_3$ and
\[ \xygraph{{\xybox{\xygraph{!{0;(1.5,0):(0,.6667)::} {Y_i}="tl" [r] {A_{i{+}1}}="tr" [d] {X_i}="br" [l] {B_i}="bl" "tl":"tr"^-{r_{i{+}1}}:"br"^-{p_{i{+}1,1}}:@{<-}"bl"^-{p_{i3}}:@{<-}"tl"^-{s_i}:@{}"br"|-{=}}}}
[r(4)]
{\xybox{\xygraph{!{0;(1.5,0):(0,.6667)::} {Y_{i{-}1}}="tl" [r] {Y_i}="tr" [d] {B_i}="br" [l] {A_i}="bl" "tl":"tr"^-{q_{2i}}:"br"^-{s_i}:@{<-}"bl"^-{p_{i2}}:@{<-}"tl"^-{r_i}:@{}"br"|-*{\scriptstyle{pb}}}}}} \]
\end{defn}
For example a subdivided composite over $(p_1,p_2,p_3)$, that is when $n=3$, assembles into a commutative diagram like this:
\[ \xygraph{{\bullet}="x0" [r] {\bullet}="a1" [r] {\bullet}="b1" [r] {\bullet}="x1" [r] {\bullet}="a2" [r] {\bullet}="b2" [r] {\bullet}="x2" [r] {\bullet}="a3" [r] {\bullet}="b3" [r] {\bullet}="x3" "a1" [u] {\bullet}="y0" [r(2)] {\bullet}="y1" [r(3)] {\bullet}="y2" [r(2)] {\bullet}="y3"
"x0":@{<-}"a1"_-{p_{11}}:"b1"_-{p_{12}}:"x1"_-{p_{13}}:@{<-}"a2"_-{p_{21}}:"b2"_-{p_{22}}:"x2"_-{p_{23}}:@{<-}"a3"_-{p_{31}}:"b3"_-{p_{32}}:"x3"_-{p_{33}} "y0":"y1"^-{q_{21}}:"y2"^-{q_{22}}:"y3"^-{q_{23}} "y0":"a1"_-{r_1} "y1"(:"b1"_-{s_1},:"a2"^-{r_2}) "y2"(:"b2"_-{s_2},:"a3"^-{r_3}) "y3":"b3"^-{s_3} "y0":"x0"_-{q_1} "y3":"x3"^-{q_3} "a1" [r(.5)u(.5)] {\scriptstyle{pb}} "a2" [r(.5)u(.5)] {\scriptstyle{pb}} "a3" [r(.5)u(.5)] {\scriptstyle{pb}}} \]
We denote a general subdivided composite over $(p_i)_i$ simply as $(Y,q,r,s)$.
\begin{defn}\label{def:morphism-subdivided-composite}
Let $(p_i)_{1{\leq}i{\leq}n}$ be a composable sequence of polynomials of length $n$. A \emph{morphism} $(Y,q,r,s) \to (Y',q',r',s')$ \emph{of subdivided composites} consists of morphisms $t_i:Y_i \to Y_i'$ for $0 \leq i \leq n$, such that $q_1=q_1't_0$, $q_{2i}'t_{i{-}1} = t_iq_{2i}$, $q_3=q_3't_n$, $r_i = r_i't_{i{-}1}$ and $s_i = s_i't_i$. With compositions inherited from $\ca E$, one has a category $\tn{SdC}(p_i)_i$ of subdivided composites over $(p_i)_i$ and morphisms between them.
\end{defn}
Given a subdivided composite $(Y,q,r,s)$ over $(p_i)_i$, note that the morphisms $q_{2i}$ are exponentiable since exponentiable maps are pullback stable, and that the composite $q_2:Y_0 \to Y_n$ defined as $q_2=q_{2n}...q_{21}$ is also exponentiable, since exponentiable maps are closed under composition. Thus we make
\begin{defn}\label{def:ass-poly}
The \emph{associated polynomial} of a given subdivided composite $(Y,q,r,s)$ over $(p_i)_i$ is defined to be
\[ \xygraph{{X_0}="p1" [r] {Y_0}="p2" [r] {Y_n}="p3" [r] {X_n}="p4" "p1":@{<-}"p2"^-{q_1}:"p3"^-{q_2}:"p4"^-{q_3}} \]
The process of taking associated polynomials is the object map of a functor
\[ \tn{ass} : \tn{SdC}(p_i)_i \longrightarrow \Polyc{\ca E}(X_0,X_n). \]
\end{defn}
Having made the necessary definitions, we now describe the canonical operations on subdivided composites which give rise to the bicategorical composition of polynomials. Let $n > 0$ and $(p_i)_{1{\leq}i{\leq}n}$ be a composable sequence of polynomials in $\ca E$. One has evident forgetful functors $\tn{res}_0$ and $\tn{res}_n$ as in
\[ \xygraph{!{0;(3.5,0):(0,1)::} {\tn{SdC}(p_i)_{1{<}i{\leq}n}}="p0" [r] {\tn{SdC}(p_i)_{1{\leq}i{\leq}n}}="p1" [r] {\tn{SdC}(p_i)_{1{\leq}i{<}n}}="p2"
"p0":@<1ex>@{<-}"p1"^-{\tn{res}_0}|{}="tl":@<1ex>"p2"^-{\tn{res}_n}|{}="tr"
"p0":@<-1ex>@{.>}"p1"_-{(-) \cdot p_{1}}|{}="bl":@{<.}@<-1ex>"p2"_-{p_{n} \cdot (-)}|{}="br" "tl":@{}"bl"|-{\perp} "tr":@{}"br"|-{\perp}} \]
and we now give a description of the right adjoints of these forgetful functors.

For $(Y,q,r,s)$ a subdivided composite over $(p_i)_{1{\leq}i{<}n}$, we construct the subdivided composite
\[ p_n \cdot (Y,q,r,s) := (p_n \cdot Y,p_n \cdot q,p_n \cdot r,p_n \cdot s) \]
over $(p_i)_{1{\leq}i{\leq}n}$ as follows. First we form the diagram on the left
\[ \xygraph{{\xybox{\xygraph{{X_{n{-}1}}="p1" [r] {A_n}="p2" [r(2)] {B_n}="p3" [r] {X_n}="p4" "p1" [u] {Y_{n{-}1}}="s" "p2" [u] {C}="mdpb" [u] {(p_n \cdot Y)_{n{-}1}}="tldpb" [r(2)] {(p_n \cdot Y)_n}="trdpb"
"p1":@{<-}"p2"_-{p_{n1}}:"p3"_-{p_{n2}}:"p4"_-{p_{n3}} "mdpb"(:"s"_-{}:"p1"_-{q_3},:"p2"^-{},:@{<-}"tldpb"_-{}:"trdpb"^-{(p_n \cdot q)_{2,n}}(:"p3"|-{},:"p4"^-{(p_n \cdot q)_3}|-{}="codeq")) "tldpb":"s"_-{\varepsilon_{n-1}}|{}="teq"
"mdpb" ([r] {\scriptstyle{dpb}}, [l(.5)d(.5)] {\scriptstyle{pb}}) "p3":@{}"codeq"|-*{=} "mdpb":@{}"teq"|-*{=}}}}
[r(4.75)]
{\xybox{\xygraph{!{0;(2,0):(0,1)::} {(p_n \cdot Y)_{n-k-1}}="tl" [r] {(p_n \cdot Y)_{n-k}}="tr" [d] {Y_{n-k}}="br" [l] {Y_{n-k-1}}="bl" "tl":"tr"^(.4){(p_n \cdot q)_{2,n-k}}:"br"^-{\varepsilon_{n-k}}:@{<-}"bl"^-{q_{2,n-k}}:@{<-}"tl"^-{\varepsilon_{n-k-1}} "tl":@{}"br"|*{\scriptstyle{pb}}}}}} \]
and then for $1 \leq k < n$ we form pullbacks as on the right in the previous display. Finally we define
\[ \begin{array}{lcccr} {(p_n \cdot q)_1 = q_1\varepsilon_0} && {(p_n \cdot r)_i = r_i\varepsilon_{i-1}} && {(p_n \cdot s)_i = s_i\varepsilon_i.} \end{array} \]
The $\varepsilon_i$ are the components of a morphism
\[ \varepsilon_{(Y,q,r,s)} : \tn{res}_n (p_n \cdot (Y,q,r,s)) \longrightarrow (Y,q,r,s) \]
of subdivided composites. The $n=4$ case of this construction is depicted in the diagram:
\[ \xygraph{!{0;(.75,0):(0,1)::} {\bullet}="x0" [r] {\bullet}="a1" [r] {\bullet}="b1" [r] {\bullet}="x1" [r] {\bullet}="a2" [r] {\bullet}="b2" [r] {\bullet}="x2" [r] {\bullet}="a3" [r] {\bullet}="b3" [r] {\bullet}="x3" "a1" [u] {\bullet}="y0" [r(2)] {\bullet}="y1" [r(3)] {\bullet}="y2" [r(2)] {\bullet}="y3"
"x3" [r] {\bullet}="a4" [r] {\bullet}="b4" [r] {\bullet}="x4"
"y3" [u(.5)r] {\bullet}="c" [u(.5)r] {\bullet}="z3" [r] {\bullet}="z4" 
"x0":@{<-}"a1"_-{p_{11}}:"b1"_-{p_{12}}:"x1"_-{p_{13}}:@{<-}"a2"_-{p_{21}}:"b2"_-{p_{22}}:"x2"_-{p_{23}}:@{<-}"a3"_-{p_{31}}:"b3"_-{p_{32}}:"x3"_-{p_{33}}:@{<-}"a4"_-{p_{41}}:"b4"_-{p_{42}}:"x4"_-{p_{43}}
"y0":"y1"|-{q_{21}}="bz0":"y2"|-{q_{22}}="bz1":"y3"|-{q_{23}}="bz2"
"bz0" [u] {\bullet}="z0" "bz1" [u] {\bullet}="z1" "bz2" [u] {\bullet}="z2"
"y0":"a1"|-{r_1} "y1"(:"b1"|-{s_1},:"a2"|-{r_2}) "y2"(:"b2"|-{s_2},:"a3"|-{r_3}) "y3":"b3"|-{s_3} "y0":"x0"|-{q_1} "y3":"x3"|-{q_3}
"c"(:"y3",:"a4",:@{<-}"z3":"z4"^-{(p_4{\cdot}q)_{24}}(:"b4",:"x4"^-{(p_4{\cdot}q)_3}))
"z0":"y0"|-{\varepsilon_0} "z1":"y1"|-{\varepsilon_1} "z2":"y2"|-{\varepsilon_2}
"z0":"z1"^-{(p_4{\cdot}q)_{21}}:"z2"^-{(p_4{\cdot}q)_{22}}:"z3"^-{(p_4{\cdot}q)_{23}}
"a1" [r(.5)u(.5)] {\scriptstyle{pb}} "a2" [r(.5)u(.5)] {\scriptstyle{pb}} "a3" [r(.5)u(.5)] {\scriptstyle{pb}}
"z0":@{}"y1"|-*{\scriptstyle{pb}} "z1":@{}"y2"|-*{\scriptstyle{pb}} "z2":@{}"y3"|-*{\scriptstyle{pb}} "c":@{}"x3"|-*{\scriptstyle{pb}} "z3":@{}"b4"_-*{\scriptstyle{dpb}}} \]
%
\begin{lem}\label{lem:poly-comp-subdivided-composite}
The morphisms $\varepsilon_{(Y,q,r,s)}$ just described are the components of the counit of an adjunction $\tn{res}_n \ladj p_n \cdot (-)$.
\end{lem}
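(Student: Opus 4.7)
The plan is to establish the adjunction $\tn{res}_n \ladj p_n \cdot (-)$ by directly verifying the couniversal property of the described $\varepsilon_{(Y,q,r,s)}$: for every $(Y', q', r', s') \in \tn{SdC}(p_i)_{1 \leq i \leq n}$ and every morphism $\phi : \tn{res}_n(Y', q', r', s') \to (Y, q, r, s)$ with components $t_i : Y'_i \to Y_i$ for $0 \leq i \leq n-1$, I construct a unique $\tilde{\phi} : (Y', q', r', s') \to p_n \cdot (Y, q, r, s)$, with components $\tilde{t}_i : Y'_i \to (p_n \cdot Y)_i$ satisfying $\varepsilon_i \tilde{t}_i = t_i$ for $0 \leq i \leq n-1$. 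Functoriality of $p_n \cdot (-)$ and naturality of $\varepsilon$ then follow automatically from the standard characterisation of adjunctions via couniversal arrows, so this is all that needs to be shown.

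The decisive step is to recognise that the data at position $n$ in $(Y', q', r', s')$ is exactly a pullback around $(p_{n2}, c)$, where $c : C \to A_n$ is the pullback projection appearing in the construction of $p_n \cdot (Y, q, r, s)$. Using, in turn, the position-$n$ condition $p_{n1} r'_n = p_{n-1,3} s'_{n-1}$ for $(Y', q', r', s')$, the morphism condition $s_{n-1} t_{n-1} = s'_{n-1}$ for $\phi$, and the position-$(n{-}1)$ condition $p_{n-1,3} s_{n-1} = q_3$ for $(Y, q, r, s)$, one computes $p_{n1} r'_n = q_3 t_{n-1}$, so $(r'_n, t_{n-1})$ factors through the pullback $C$ via a unique map $\pi : Y'_{n-1} \to C$ with $c\pi = r'_n$ and $c'\pi = t_{n-1}$, where $c' : C \to Y_{n-1}$ is the other projection. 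Because the square $(r'_n, p_{n2}, s'_n, q'_{2n})$ is a pullback by definition of subdivided composite, so is $(c\pi, p_{n2}, s'_n, q'_{2n})$; in other words, $(\pi, q'_{2n}, s'_n)$ is a pullback around $(p_{n2}, c)$. The universal property of the distributivity pullback then delivers the unique pair $\tilde{t}_{n-1}, \tilde{t}_n$ compatible with $\pi$, $q'_{2n}$ and $s'_n$, which in particular forces $\varepsilon_{n-1} \tilde{t}_{n-1} = c'\pi = t_{n-1}$.

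The remaining components $\tilde{t}_i$ for $0 \leq i \leq n-2$ are obtained by downward induction on $i$, using that $(p_n \cdot Y)_i$ is a pullback of $\varepsilon_{i+1}$ along $q_{2,i+1}$: the pair $(t_i,\, \tilde{t}_{i+1} q'_{2,i+1})$ is compatible by the inductive hypothesis $\varepsilon_{i+1} \tilde{t}_{i+1} = t_{i+1}$ together with the morphism condition $q_{2,i+1} t_i = t_{i+1} q'_{2,i+1}$ for $\phi$, and hence induces a unique $\tilde{t}_i$ with $\varepsilon_i \tilde{t}_i = t_i$ and $(p_n \cdot q)_{2,i+1} \tilde{t}_i = \tilde{t}_{i+1} q'_{2,i+1}$. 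Verifying that $(\tilde{t}_0, \ldots, \tilde{t}_n)$ assembles into a morphism of subdivided composites is then routine: the conditions involving $(p_n \cdot q)_1 = q_1 \varepsilon_0$, $(p_n \cdot r)_i = r_i \varepsilon_{i-1}$ and $(p_n \cdot s)_i = s_i \varepsilon_i$ for $i < n$, as well as $(p_n \cdot q)_3 = p_{n3}(p_n \cdot s)_n$, reduce immediately to the counit equations $\varepsilon_i \tilde{t}_i = t_i$ combined with the corresponding conditions on $\phi$, while those at $i = n$ are exactly what the distributivity pullback universal property returns. Uniqueness of $\tilde{\phi}$ similarly reduces, at positions $n-1$ and $n$, to the uniqueness clause of the distributivity pullback, and at lower positions to the uniqueness clauses of the ordinary pullbacks. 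The main conceptual obstacle is the identification described in the previous paragraph: seeing that the pullback-square axiom for $(Y', q', r', s')$ at position $n$, together with the factorisation through $C$ forced by $\phi$, is \emph{exactly} the data of a pullback around $(p_{n2}, c)$, so that the distributivity pullback universal property is directly applicable. Once this is recognised, the rest of the proof is systematic diagrammatic bookkeeping.
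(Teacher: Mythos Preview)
Your proof is correct and follows essentially the same approach as the paper's own argument: factor through the pullback $C$ to obtain a map $\pi$ (the paper's $u$), invoke the distributivity pullback to produce $\tilde t_{n-1}$ and $\tilde t_n$, then induce the remaining $\tilde t_i$ by downward induction using the defining pullbacks of $(p_n\cdot Y)_i$. Your writeup is somewhat more explicit than the paper's---you spell out why $(\pi,q'_{2n},s'_n)$ is a pullback around $(p_{n2},c)$ and treat uniqueness in detail---but the underlying construction is identical.
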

\begin{proof}
Let $(Y',q',r',s')$ be a subdivided composite over $(p_i)_{1{\leq}i{\leq}n}$, then for $t$ as in
\[ \xygraph{!{0;(2,0):(0,.5)::} {\tn{res}_n (p_n \cdot (Y,q,r,s))}="p1" [r(2)] {(Y,q,r,s)}="p2" [dl] {\tn{res}_n (Y',q',r',s')}="p3" "p1":"p2"^-{\varepsilon_{(Y,q,r,s)}}:@{<-}"p3"^-{t}:@{.>}"p1"^-{\tn{res}_n(t')}} \]
we must give $t'$ unique so that the above triangle commutes. The following commutative diagram assembles this given data in the case $n=4$.
\[ \xygraph{!{0;(.75,0):(0,1)::} {\bullet}="x0" [r] {\bullet}="a1" [r] {\bullet}="b1" [r] {\bullet}="x1" [r] {\bullet}="a2" [r] {\bullet}="b2" [r] {\bullet}="x2" [r] {\bullet}="a3" [r] {\bullet}="b3" [r] {\bullet}="x3" "a1" [u] {\bullet}="y0" [r(2)] {\bullet}="y1" [r(3)] {\bullet}="y2" [r(2)] {\bullet}="y3"
"x3" [r] {\bullet}="a4" [r] {\bullet}="b4" [r] {\bullet}="x4"
"y3" [u(.5)r] {\bullet}="c" [u(.5)r] {\bullet}="z3" [r] {\bullet}="z4" 
"x0":@{<-}"a1"_-{p_{11}}:"b1"_-{p_{12}}:"x1"_-{p_{13}}:@{<-}"a2"_-{p_{21}}:"b2"_-{p_{22}}:"x2"_-{p_{23}}:@{<-}"a3"_-{p_{31}}:"b3"_-{p_{32}}:"x3"_-{p_{33}}:@{<-}"a4"_-{p_{41}}:"b4"_-{p_{42}}:"x4"_-{p_{43}}
"y0":"y1"|-{q_{21}}="bz0":"y2"|-{q_{22}}="bz1":"y3"|-{q_{23}}="bz2"
"bz0" [u] {\bullet}="z0" "bz1" [u] {\bullet}="z1" "bz2" [u] {\bullet}="z2"
"y0":"a1"|-{r_1} "y1"(:"b1"|-{s_1},:"a2"|-{r_2}) "y2"(:"b2"|-{s_2},:"a3"|-{r_3}) "y3":"b3"|-{s_3} "y0":"x0"|-{q_1} "y3":"x3"|-{q_3}
"c"(:"y3",:"a4",:@{<-}"z3":"z4"^-{(p_4{\cdot}q)_{24}}(:"b4",:"x4"|-{(p_4{\cdot}q)_3}))
"z0":"y0"|-{\varepsilon_0} "z1":"y1"|-{\varepsilon_1} "z2":"y2"|-{\varepsilon_2}
"z0":"z1"^-{(p_4{\cdot}q)_{21}}:"z2"^-{(p_4{\cdot}q)_{22}}:"z3"^-{(p_4{\cdot}q)_{23}}
"a1" [r(.5)u(.5)] {\scriptstyle{pb}} "a2" [r(.5)u(.5)] {\scriptstyle{pb}} "a3" [r(.5)u(.5)] {\scriptstyle{pb}}
"z0":@{}"y1"|-*{\scriptstyle{pb}} "z1":@{}"y2"|-*{\scriptstyle{pb}} "z2":@{}"y3"|-*{\scriptstyle{pb}} "c":@{}"x3"|-*{\scriptstyle{pb}} "z3":@{}"b4"_-*{\scriptstyle{dpb}}
"z0" [u] {\bullet}="w0" "z1" [u] {\bullet}="w1" "z2" [u] {\bullet}="w2" "z3" [ul] {\bullet}="w3" "z4" [u] {\bullet}="w4"
"w0":"w1"^-{q_{21}'}:"w2"^-{q_{22}'}:"w3"^-{q_{23}'}:"w4"^-{q_{24}'}
"w0":@{.>}@/_{1pc}/"y0"|-{t_0} "w1":@{.>}@/_{1pc}/"y1"|(.75){t_1} "w2":@{.>}@/_{1pc}/"y2"|(.75){t_2} "w3":@{.>}@/_{1pc}/"y3"|(.75){t_3} "w4":@/^{1pc}/"x4"^-{q_3'} "w3":@{.>}"a4"|(.55){r_4'} "w4":@{.>}@/_{1pc}/"b4"|-{s_4'}} \]
Since $q_{n-1}t_{n-1}=p_{n1}r_n'$ one induces $u:Y_{n-1}' \to C$ using the defining pullback of $C$, and then one induces $t_{n-1}':Y_{n-1}' \to Y_{n-1}$ and $t_n':Y_n' \to Y_n$ from the maps $u$ and $s_n'$ using the distributivity pullback. The rest of the $t_i'$ are induced inductively as follows. For $0 < i < n$ given $t_i':Y_i' \to Y_i$, one induces $t_{i-1}'$ using the maps $t_{i-1}$ and $t_i'$ and the pullback which defines $(p_n \cdot Y)_{i-1}$. By construction the $t_i'$ are the components of the required unique map $t'$.
\end{proof}
For $(Y,q,r,s)$ a subdivided composite over $(p_i)_{1{<}i{\leq}n}$, we construct the subdivided composite
\[ (Y,q,r,s) \cdot p_1 := (Y \cdot p_1,q \cdot p_1,r \cdot p_1,s \cdot p_1) \]
over $(p_i)_{1{\leq}i{\leq}n}$ as follows. First one takes the pullback on the left, then for $0<i<n$ the distributivity pullbacks as in the middle,
\[ \xygraph{{\xybox{\xygraph{!{0;(1.25,0):(0,.8)::} {C_{02}}="tl" [r] {Y_0}="tr" [d] {X_1}="br" [l] {B_1}="bl" "tl":"tr"^-{f_0}:"br"^-{q_1}:@{<-}"bl"^-{p_{13}}:@{<-}"tl"^-{g_0} "tl":@{}"br"|-*{\scriptstyle{pb}}}}}
[r(3)]
{\xybox{\xygraph{!{0;(1.5,0):(0,.75)::} {C_{i1}}="tl" [r] {C_{i2}}="tr" [d(2)] {Y_i}="br" [l] {Y_{i-1}}="bl" [u] {C_{i-1,2}}="ml" "tl":"tr"^-{q_{2,i}'}:"br"^-{f_i}:@{<-}"bl"^-{q_{2,i}}:@{<-}"ml"^-{f_{i-1}}:@{<-}"tl"^-{g_i} "tl":@{}"br"|-*{\scriptstyle{dpb}}}}}
[r(4.5)]
{\xybox{\xygraph{!{0;(3,0):(0,.3333)::} {(Y \cdot p_1)_{n-i-1}}="tl" [r] {(Y \cdot p_1)_{n-i}}="tr" [d] {C_{n-i-1,2}}="br" [l] {C_{n-i-1,1}}="bl" "tl":"tr"^-{(q \cdot p_1)_{2,n-i}}:"br"^-{g_i''}:@{<-}"bl"^-{q_{2,n-i-1}'}:@{<-}"tl"^-{g_{i+1}'} "tl":@{}"br"|-*{\scriptstyle{pb}}}}}} \]
and then for $0<i<n$ one takes the pullbacks as on the right in the previous display, setting $(q \cdot p_1)_{2,n}=q_{2,n-1}'$, $g_1''=g_{n-1}$, $g_{i+1}''=g_{n-i+1}g_{i+1}'$ for $i+1<n$, $g_{n-1}''=g_0g_1g_{n-1}'$ and $q_{02}'=p_{12}$. Finally one defines
\[ \begin{array}{lccr} {(q \cdot p_1)_1 = p_{11}g_n'} &&& {(q \cdot p_1)_3 = q_3f_{n-1}.} \end{array} \]
In the case $n=4$ one obtains a diagram like this:
\[ \xygraph{!{0;(.65,0):(0,1.2)::} {\bullet}="x0" [r] {\bullet}="a1" [r] {\bullet}="b1" [r] {\bullet}="x1" [r] {\bullet}="a2" [r] {\bullet}="b2" [r] {\bullet}="x2" [r] {\bullet}="a3" [r] {\bullet}="b3" [r] {\bullet}="x3" [r] {\bullet}="a4" [r] {\bullet}="b4" [r] {\bullet}="x4" "a2" [u] {\bullet}="y1" [r(2)] {\bullet}="y2" [r(3)] {\bullet}="y3" [r(2)] {\bullet}="y4"
"x0":@{<-}"a1"_-{p_{11}}:"b1"_-{p_{12}}:"x1"_-{p_{13}}:@{<-}"a2"_-{p_{21}}:"b2"_-{p_{22}}:"x2"_-{p_{23}}:@{<-}"a3"_-{p_{31}}:"b3"_-{p_{32}}:"x3"_-{p_{33}}:@{<-}"a4"_-{p_{41}}:"b4"_-{p_{42}}:"x4"_-{p_{43}} "y1":"y2"|-{q_{21}}:"y3"|-{q_{22}}:"y4"|-{q_{23}} "y1":"a2"|-{r_1} "y2"(:"b2"|-{s_1},:"a3"|-{r_2}) "y3"(:"b3"|-{s_2},:"a4"|-{r_3}) "y4":"b4"|-{s_3} "y1":"x1"|-{q_1} "y4":"x4"|-{q_3} "a2" [r(.5)u(.5)] {\scriptstyle{pb}} "a3" [r(.5)u(.5)] {\scriptstyle{pb}} "a4" [r(.5)u(.5)] {\scriptstyle{pb}}
"y1" [u(.75)] {\bullet}="c11" [u(.75)] {\bullet}="c12" [u(1.5)] {\bullet}="c14"
"y2" [u(1.5)] {\bullet}="c22" [u(.75)] {\bullet}="c23" [u(.75)] {\bullet}="c24"
"y3" [u(2.25)] {\bullet}="c33" [u(.75)] {\bullet}="c34" "y4" [u(3)] {\bullet}="c44" "a1" [u(4)] {\bullet}="c04"
"c04":"a1"^-{g_4'} "c14":"c12"^-{g_3'}:"c11"^-{g_1}:"y1"^-{f_0} "c24":"c23"^-{g_2'}:"c22"^-{g_2}:"y2"^-{f_1} "c34":"c33"^-{g_3=g_1''}:"y3"^-{f_2} "c44":"y4"_-{f_3}
"c04":"c14"^-{(q \cdot p_1)_{21}}:"c24"^-{(q \cdot p_1)_{22}}:"c34"^-{(q \cdot p_1)_{23}}:"c44"^-{(q \cdot p_1)_{24}} "c23":"c33"|-{q_{22}'} "c12":"c22"|-{q_{21}'} "c11":"b1"_-{g_0} "c04":"x0"_-{(q \cdot p_1)_1} "c44":"x4"^-{(q \cdot p_1)_3}
"x1" [u(.5)] {\scriptstyle{pb}} "c12" [l(1.5)] {\scriptstyle{pb}} "c12":@{}"y2"|-*{\scriptstyle{dpb}} "c23":@{}"y3"|-*{\scriptstyle{dpb}} "c34":@{}"y4"|-*{\scriptstyle{dpb}} "c14":@{}"c22"|-*{\scriptstyle{pb}} "c24":@{}"c33"|-*{\scriptstyle{pb}}} \]
The equations $\varepsilon_0'=f_0g_1g_3'$, $\varepsilon_{n-1}'=f_{n-1}$ and $\varepsilon_i'=f_ig_{n-i-1}''$ for $0<i<n-1$, define the components of the morphism
\[ \varepsilon_{Y,q,r,s}' : \tn{res}_0 ((Y,q,r,s) \cdot p_1) \longrightarrow (Y,q,r,s).  \]
\begin{lem}\label{lem:subdivided-composite-comp-poly}
The morphisms $\varepsilon_{Y,q,r,s}'$ just described are the components of the counit of an adjunction $\tn{res}_0 \ladj (-) \cdot p_1$.
\end{lem}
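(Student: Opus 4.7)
The plan is to mirror the proof of Lemma~\ref{lem:poly-comp-subdivided-composite} in the symmetric situation, exploiting that the construction of $(Y,q,r,s) \cdot p_1$ is, like that of $p_n \cdot (Y,q,r,s)$, assembled from a sequence of (distributivity) pullbacks whose universal properties can be invoked in turn. Given $(Y',q',r',s')$ a subdivided composite over $(p_i)_{1 \leq i \leq n}$ and a morphism $t:\tn{res}_0(Y',q',r',s') \to (Y,q,r,s)$ with components $t_i$, one must exhibit a unique $t':(Y',q',r',s') \to (Y,q,r,s) \cdot p_1$ whose components $t'_j$ satisfy $\varepsilon'_{Y,q,r,s} \comp \tn{res}_0(t') = t$. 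A preliminary and routine verification is that $\varepsilon'_{Y,q,r,s}$ really is a morphism of subdivided composites, which follows directly from the defining equations of the pullbacks and distributivity pullbacks in the construction.

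The inductive construction of the $t'_j$ would proceed in essentially the opposite direction from that of the previous lemma. First, the pullback square of $(Y',q',r',s')$ with corners $A_1$, $B_1$, $Y'_0$, $Y'_1$ (supplied by the definition of a subdivided composite), together with the relevant component of $t$, provides exactly the data needed to induce a comparison map into $C_{02}$ via its universal property. Then, for each $i$ with $0 < i < n$ in turn, one would invoke the universal property of the distributivity pullback defining $(C_{i1},C_{i2})$, feeding it with the comparison map built in the previous step, the component $t_i$, and the appropriate pullback square of $(Y',q',r',s')$, to produce compatible maps into $C_{i1}$ and $C_{i2}$. Finally, the auxiliary pullbacks on the right of the construction of $(Y,q,r,s) \cdot p_1$ are used to assemble these comparison maps into the components $t'_j:Y'_j \to (Y \cdot p_1)_j$ of the sought-after morphism.

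The principal obstacle is bookkeeping rather than any real new idea: whereas the previous lemma requires a single application of a distributivity pullback, here the construction of $(Y,q,r,s) \cdot p_1$ contains $n-1$ distributivity pullbacks, so the inductive step must be iterated and one must keep careful track of which comparison data feeds into which universal property at each stage. Each individual invocation is, however, handled in the same way as in the proof of Lemma~\ref{lem:poly-comp-subdivided-composite}. Uniqueness of $t'$ is then automatic: any other $t''$ satisfying $\varepsilon'_{Y,q,r,s} \comp \tn{res}_0(t'') = t$ must, by the uniqueness parts of the universal properties of the pullback and each of the distributivity pullbacks used, coincide with $t'$ component by component.
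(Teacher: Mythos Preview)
Your proposal is correct and follows essentially the same route as the paper: first induce a map into $C_{02}$ from $Y'_1$ using its pullback universal property (the paper uses $s'_1$ and $t_1$ here, so your mention of the full pullback square $(Y'_0,A_1,B_1,Y'_1)$ is slightly more than is needed at this stage), then climb through the $n-1$ distributivity pullbacks in turn to obtain maps into the $C_{i1}$ and $C_{i2}$, and finally read off the components $t'_j$ from the auxiliary pullbacks on the right. The paper's proof is more terse but structurally identical; the only point neither of you makes fully explicit is that the data fed into each distributivity pullback really does form a pullback around it, which follows by pasting the pullback squares of $(Y',q',r',s')$ and $(Y,q,r,s)$ along the components of $t$.
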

\begin{proof}
Given $(Y,q,r,s)$ in $\tn{SdC}(p_i)_{1{\leq}i{\leq}n}$ and $t$ as in
\[ \xygraph{!{0;(2.5,0):(0,.4)::} {\tn{res}_0 ((Y,q,r,s) \cdot p_1)}="tl" [r(2)] {(Y,q,r,s)}="tr" [dl] {\tn{res}_0(Y',q',r',s')}="b" "tl":"tr"^-{\varepsilon_{Y,q,r,s}'}:@{<-}"b"^-{t}:@{.>}"tl"^-{\tn{res}_0(t')}} \]
we must exhibit $t'$ as shown unique so that the above diagram commutes. In the case $n=4$ the data $(Y',q',r',s')$ and $t$ fit into the following diagram:
\[ \xygraph{!{0;(.65,0):(0,1.2)::} {\bullet}="x0" [r] {\bullet}="a1" [r] {\bullet}="b1" [r] {\bullet}="x1" [r] {\bullet}="a2" [r] {\bullet}="b2" [r] {\bullet}="x2" [r] {\bullet}="a3" [r] {\bullet}="b3" [r] {\bullet}="x3" [r] {\bullet}="a4" [r] {\bullet}="b4" [r] {\bullet}="x4" "a2" [u] {\bullet}="y1" [r(2)] {\bullet}="y2" [r(3)] {\bullet}="y3" [r(2)] {\bullet}="y4"
"x0":@{<-}"a1"_-{p_{11}}:"b1"_-{p_{12}}:"x1"_-{p_{13}}:@{<-}"a2"_-{p_{21}}:"b2"_-{p_{22}}:"x2"_-{p_{23}}:@{<-}"a3"_-{p_{31}}:"b3"_-{p_{32}}:"x3"_-{p_{33}}:@{<-}"a4"_-{p_{41}}:"b4"_-{p_{42}}:"x4"_-{p_{43}} "y1":"y2"|-{q_{21}}:"y3"|-{q_{22}}:"y4"|-{q_{23}} "y1":"a2"|-{r_1} "y2"(:"b2"|-{s_1},:"a3"|-{r_2}) "y3"(:"b3"|-{s_2},:"a4"|-{r_3}) "y4":"b4"|-{s_3} "y1":"x1"|-{q_1} "y4":"x4"|-{q_3} "a2" [r(.5)u(.5)] {\scriptstyle{pb}} "a3" [r(.5)u(.5)] {\scriptstyle{pb}} "a4" [r(.5)u(.5)] {\scriptstyle{pb}}
"y1" [u(.75)] {\bullet}="c11" [u(.75)] {\bullet}="c12" [u(1.5)] {\bullet}="c14"
"y2" [u(1.5)] {\bullet}="c22" [u(.75)] {\bullet}="c23" [u(.75)] {\bullet}="c24"
"y3" [u(2.25)] {\bullet}="c33" [u(.75)] {\bullet}="c34" "y4" [u(3)] {\bullet}="c44" "a1" [u(4)] {\bullet}="c04"
"c04":"a1"^-{} "c14":"c12"^-{}:"c11"^-{}:"y1"^-{} "c24":"c23"^-{}:"c22"^-{}:"y2"^-{} "c34":"c33"^-{}:"y3"^-{} "c44":"y4"_-{}
"c04":"c14"^-{(q \cdot p_1)_{21}}:"c24"^-{(q \cdot p_1)_{22}}:"c34"^-{(q \cdot p_1)_{23}}:"c44"^-{(q \cdot p_1)_{24}} "c23":"c33"|-{} "c12":"c22"|-{} "c11":"b1"_-{} "c04":"x0"|-{} "c44":"x4"|-{}
"x1" [u(.5)] {\scriptstyle{pb}} "c12" [l(1.5)] {\scriptstyle{pb}} "c12":@{}"y2"|-*{\scriptstyle{dpb}} "c23":@{}"y3"|-*{\scriptstyle{dpb}} "c34":@{}"y4"|-*{\scriptstyle{dpb}} "c14":@{}"c22"|-*{\scriptstyle{pb}} "c24":@{}"c33"|-*{\scriptstyle{pb}}
"c04" [u(1.5)] {\bullet}="z0" "c14" [u(1.5)] {\bullet}="z1" "c24" [u(1.5)] {\bullet}="z2" "c34" [u(1.5)] {\bullet}="z3" "c44" [u(1.5)] {\bullet}="z4"
"x0":@{<-}@/^{1pc}/"z0"^-{q_1'}:"z1"^-{q_{21}'}:"z2"^-{q_{22}'}:"z3"^-{q_{23}'}:"z4"^-{q_{24}'}:@/^{1pc}/"x4"^-{q_3'}
"z1":@{.>}@/_{1pc}/"y1"|-{t_1} "z2":@{.>}@/_{1pc}/"y2"|(.55){t_2} "z3":@{.>}@/_{1pc}/"y3"|(.6){t_3} "z4":@{.>}@/_{1pc}/"y4"|-{t_4} "z0":@{.>}@/^{1pc}/"a1"|-{r_1'} "z1":@{.>}@/_{1pc}/"b1"|(.45){s_1'}} \]
Using the pullback that defines $C_{02}$ and the maps $s_1'$ and $t_1$, one induces $Y_1' \to C_{02}$. Using the distributivity pullbacks one induces successively the morphisms $Y_i' \to C_{i1}$ and $Y_{i+1}' \to C_{i2}$ for $0<i<n$. In the case $i=n-1$ we denote these maps as $t_{n-1}'$ and $t_n'$ respectively. The components $t_i'$ for $0 \leq i < n-1$ are then induced from this data and the pullbacks that define the objects $(Y \cdot p_1)_i$. By construction the $t_i'$ are the components of the required unique map $t'$.
\end{proof}
\begin{prop}\label{prop:terminal-subdivided-composites}
For any composable sequence $(p_i)_{1{\leq}i{\leq}n}$ of polynomials in a category $\ca E$ with pullbacks, the category $\tn{SdC}(p_i)_i$ has a terminal object.
\end{prop}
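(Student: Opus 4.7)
The plan is to proceed by induction on the length $n$ of the composable sequence. For the base case $n = 1$, I would exhibit the polynomial $p_1$ itself as the terminal subdivided composite: take $Y_0 = A_1$, $Y_1 = B_1$, with $r_1 = 1_{A_1}$ and $s_1 = 1_{B_1}$, which forces $q_1 = p_{11}$, $q_{21} = p_{12}$, $q_3 = p_{13}$, and makes the square $(q_{21}, s_1, p_{12}, r_1)$ trivially a pullback. Uniqueness is immediate: for any other $S = (Y, q, r, s)$ over $p_1$, the morphism conditions $r_1 = r_1' t_0$ and $s_1 = s_1' t_1$ reduce, since $r_1'$ and $s_1'$ are identities in the candidate, to $t_0 = r_1$ and $t_1 = s_1$, and the remaining compatibility equations follow automatically from the defining properties of $S$.

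For the inductive step with $n \geq 2$, the key observation is that Lemma \ref{lem:poly-comp-subdivided-composite} already furnishes an adjunction $\tn{res}_n \ladj p_n \cdot (-)$ between $\tn{SdC}(p_i)_{1 \leq i \leq n}$ and $\tn{SdC}(p_i)_{1 \leq i < n}$. Letting $T$ be a terminal object of the latter (which exists by induction), the right adjoint $p_n \cdot (-)$ preserves it, so $p_n \cdot T$ is terminal in $\tn{SdC}(p_i)_{1 \leq i \leq n}$: explicitly, for any $S$ in this category the adjunction gives a bijection between $\tn{SdC}(p_i)_{1 \leq i \leq n}(S, p_n \cdot T)$ and the singleton $\tn{SdC}(p_i)_{1 \leq i < n}(\tn{res}_n S, T)$.

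Since the hard construction — of $p_n \cdot (-)$ and its adjointness with $\tn{res}_n$ — has already been discharged in Lemma \ref{lem:poly-comp-subdivided-composite}, I do not anticipate any substantive obstacle; the proposition is a brief formal consequence of that lemma together with the direct verification of the base case. One could equally run the induction starting from the other end using the adjunction $\tn{res}_0 \ladj (-) \cdot p_1$ from Lemma \ref{lem:subdivided-composite-comp-poly}.
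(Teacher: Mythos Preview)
Your proof is correct and follows essentially the same route as the paper: induction on $n$, using that the right adjoint $p_n \cdot (-)$ (or equally $(-) \cdot p_1$) preserves terminal objects. The only difference is that the paper takes $n=0$ as its base case --- identifying $\tn{SdC}()$ with the category $\Span{\ca E}(X_0,X_0)$ of endospans, in which the identity span is terminal --- which is worth including since the $n=0$ case is invoked immediately afterward to produce identity polynomials.
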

\begin{proof}
We proceed by induction on $n$. In the case $n=0$, observe that a subdivided composite consists just of the data $Y_0$, $q_1:Y_0 \to X_0$ and $q_3:Y_0 \to X_0$, and that $\tn{SdC}()$ is the category $\Span{\ca E}(X_0,X_0)$ of endospans of $X_0$. The identity endospan is terminal. For the inductive step apply either of the functors $p_n \cdot (-)$ or $(-) \cdot p_1$ which as right adjoints, preserve terminal objects.
\end{proof}
\begin{defn}\label{def:composition-of-polynomials}
Let $\ca E$ be a category with pullbacks. A \emph{composite} of a composable sequence $(p_i)_{1{\leq}i{\leq}n}$ of polynomials in $\ca E$, is defined to be the associated polynomial of a terminal object in the category $\tn{SdC}(p_i)_i$. When such a composite has been chosen, it is denoted as $p_n \comp ... \comp p_1$.
\end{defn}
Let us consider now some degenerate cases of Definition \ref{def:composition-of-polynomials}.
\begin{itemize}
\item $n=0$: Choosing identity spans as terminal nullary subdivided composites (see the proof of Proposition \ref{prop:terminal-subdivided-composites}), nullary composition of polynomials gives polynomials whose constituent maps are all identities. That is,
\[ \xygraph{{X}="p1" [r] {X}="p2" [r] {X}="p3" [r] {X}="p4" "p1":@{<-}"p2"^-{1_X}:"p3"^-{1_X}:"p4"^-{1_X}} \]
is the ``identity polynomial on $X$'' as one would hope.
\item $n=1$: One may identify $\tn{SdC}(p)$ as the slice $\Polyc{\ca E}(X_0,X_1)/p$, and thus choose $1_p$ as the terminal unary subdivided composite over $(p)$. Thus the unary composite of a given polynomial $p$ is just $p$.
\item $n=2$: applying $p_2 \cdot (-)$ to $p_1$, or $(-) \cdot p_1$ to $p_2$, gives the same subdivided composite, namely
\[ \xygraph{!{0;(1,0):(0,.75)::} {\bullet}="b1" [r] {\bullet}="b2" [r] {\bullet}="b3" [r] {\bullet}="b4" [r] {\bullet}="b5" [r] {\bullet}="b6" [r] {\bullet}="b7" "b4" [u] {\bullet}="p1" [u] {\bullet}="dl" ([r(1.5)] {\bullet}="dr", [l(1.5)] {\bullet}="p2")
"b1":@{<-}"b2"_-{p_{11}}:"b3"_-{p_{12}}:"b4"_-{p_{13}}:@{<-}"b5"_-{p_{21}}:"b6"_-{p_{22}}:"b7"_-{p_{23}} "dl":"p1"_-{}(:"b3"_-{},:"b5"^-{}) "b2":@{<-}"p2"_-{}:"dl"_-{}:"dr"_-{}:"b6"^(.7){} "b1":@{<-}"p2"^-{} "dr":"b7"^-{}
"b3" [u(1.25)] {\scriptstyle{pb}} "b5" [u(1.25)] {\scriptstyle{dpb}} "b4" [u(.5)] {\scriptstyle{pb}}} \]
which is terminal by the case $n=1$ and since the functors $p_2 \cdot (-)$ and $(-) \cdot p_1$, as right adjoints, preserve terminal objects. Thus the associated composite of the above is the binary composite $p_2 \comp p_1$, and this agrees with the binary composition of polynomials given in \cite{GambinoKock-PolynomialFunctors}.
\end{itemize}
\begin{lem}\label{lem:for-poly-composition}
Let $n>0$ and $(p_i)_{1{\leq}i{\leq}n}$ be a composable sequence of polynomials in a category $\ca E$ with pullbacks. Then one has canonical isomorphisms
\[ \xygraph{{\xybox{\xygraph{!{0;(3,0):(0,.3333)::} {\tn{SdC}(p_i)_{1{\leq}i{<}n}}="tl" [r] {\Polyc{\ca E}(X_0,X_{n-1})}="tr" [d] {\Polyc{\ca E}(X_0,X_n)}="br" [l] {\tn{SdC}(p_i)_{1{\leq}i{\leq}n}}="bl" "tl":"tr"^-{\tn{ass}}:"br"^-{p_n \comp (-)}:@{<-}"bl"^-{\tn{ass}}:@{<-}"tl"^-{p_n \cdot (-)} "tl":@{}"br"|-*{\iso}}}}
[r(6)]
{\xybox{\xygraph{!{0;(3,0):(0,.3333)::} {\tn{SdC}(p_i)_{1{<}i{\leq}n}}="tl" [r] {\Polyc{\ca E}(X_1,X_n)}="tr" [d] {\Polyc{\ca E}(X_0,X_n)}="br" [l] {\tn{SdC}(p_i)_{1{\leq}i{\leq}n}}="bl" "tl":"tr"^-{\tn{ass}}:"br"^-{(-) \comp p_1}:@{<-}"bl"^-{\tn{ass}}:@{<-}"tl"^-{(-) \cdot p_1} "tl":@{}"br"|-*{\iso}}}}} \]
\end{lem}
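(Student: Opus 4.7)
The plan is to prove both isomorphisms by unpacking each side and observing that the constructions perform the same pullbacks and distributivity pullback. I treat the first isomorphism; the second is dual.

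Fix $T = (Y,q,r,s) \in \tn{SdC}(p_i)_{1{\leq}i{<}n}$ and set $p := \tn{ass}(T)$, which is the polynomial $X_0 \xleftarrow{q_1} Y_0 \xrightarrow{q_2} Y_{n-1} \xrightarrow{q_3} X_{n-1}$ with $q_2 = q_{2,n-1}\cdots q_{2,1}$. The goal is to produce a canonical cartesian isomorphism $\tn{ass}(p_n \cdot T) \iso p_n \comp p$, natural in $T$.

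First, I unpack $p_n \comp p$ using the binary-composition analysis following Definition~\ref{def:composition-of-polynomials}: a terminal subdivided composite over $(p, p_n)$ is built by (i) pulling $p_{n1}$ back along $q_3$ to form $C$ with maps $C \to A_n$ and $C \to Y_{n-1}$; (ii) taking the distributivity pullback around $(p_{n2},\, C \to A_n)$, producing $D \to E$ together with $D \to C$ and $E \to B_n$; and (iii) pulling $D \to Y_{n-1}$ back along $q_2$ to produce $D_0$ with $D_0 \to Y_0$ and $D_0 \to D$. The resulting polynomial $p_n \comp p$ is then $X_0 \leftarrow D_0 \to E \to X_n$ with the evident legs.

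Next, I unpack $\tn{ass}(p_n \cdot T)$ from the construction preceding Lemma~\ref{lem:poly-comp-subdivided-composite}. Steps (i) and (ii) above are precisely its first two steps, giving canonically $(p_n \cdot Y)_{n-1} = D$ and $(p_n \cdot Y)_n = E$. The remaining $(p_n \cdot Y)_i$ for $0 \leq i < n-1$ are produced by successively pulling back against $q_{2,n-1}, \ldots, q_{2,1}$; by the elementary composition-of-pullbacks fact, this stack of $n-1$ pullbacks is canonically a single pullback of $D \to Y_{n-1}$ along the composite $q_2$, so $(p_n \cdot Y)_0 \iso D_0$. Under this identification the left leg $(p_n \cdot q)_1 = q_1\varepsilon_0$ and right leg $(p_n \cdot q)_3$ agree with the legs of $p_n \comp p$, while the composed middle leg $(p_n \cdot q)_{2,n} \cdots (p_n \cdot q)_{2,1}$ identifies with $D_0 \to D \to E$ by the same composition-of-pullbacks argument.

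Naturality in $T$ is immediate, because each of the identifications above is determined by a pullback universal property, and morphisms of subdivided composites induce compatible morphisms between the pullbacks and the distributivity pullback at each stage. The second isomorphism is proved dually by inspecting the construction of $(Y,q,r,s) \cdot p_1$: its base consists of a pullback of $q_1$ against $p_{13}$ and a single distributivity pullback, and the pullbacks propagating the result back through the sequence compose, again by the elementary fact, to a single pullback along the middle map of $\tn{ass}(T)$. The main bookkeeping obstacle in both cases is tracking the iterated pullbacks and confirming they compose to the single pullback used by binary composition; once this is in hand via the composition-of-pullbacks fact, the remainder is routine.
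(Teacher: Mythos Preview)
Your treatment of the left square is correct and is exactly what the paper has in mind: the construction of $p_n \cdot T$ performs the same single pullback and single distributivity pullback as the binary composite $p_n \comp \tn{ass}(T)$, and the remaining iterated pullbacks along the $q_{2,i}$ compose, by the elementary pasting property of pullbacks, to the single pullback along $q_2$. The paper records this as ``the definitions and the elementary properties of pullbacks''.

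The second square, however, is \emph{not} dual to the first, and your sketch contains a real gap. Reread the construction of $(Y,q,r,s)\cdot p_1$: after the initial pullback of $q_1$ against $p_{13}$ one takes, for each $0<i<n$, a distributivity pullback along $q_{2,i}$. So there are $n-1$ distributivity pullbacks stacked up, not one, and the asymmetry between the two constructions is genuine. By contrast the binary composite $\tn{ass}(T)\comp p_1$ takes a single distributivity pullback along the composite middle map $q_2 = q_{2,n-1}\cdots q_{2,1}$. What must be shown is that the tower of distributivity pullbacks along the individual $q_{2,i}$, interleaved with the ordinary pullbacks of the construction, composes to that single distributivity pullback along $q_2$. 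The elementary pasting of ordinary pullbacks does not give this; you need Lemma~\ref{lem:distpb-composition-cancellation} (composition/cancellation of distributivity pullbacks), applied iteratively. This is precisely what the paper invokes for the right-hand square. Once you replace ``a single distributivity pullback'' with the correct picture and cite Lemma~\ref{lem:distpb-composition-cancellation} for the composition step, your argument goes through.
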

\begin{proof}
The canonical isomorphism on the left follows from the definitions and the elementary properties of pullbacks. The canonical isomorphism on the right follows from the definitions, and iterated application of Lemma \ref{lem:distpb-composition-cancellation}.
\end{proof}
In order to make explicit the horizontal composition of 2-cells in $\Polyc {\ca E}$ we consider a horizontally composable sequence of morphisms of polynomials of length $n$, that is to say diagrams
\[ \xygraph{!{0;(1.5,0):(0,.5)::} {X_{i-1}}="p0" [ur] {A_i}="p1" [r] {B_i}="p2" [dr] {X_i}="p3" [dl] {B'_i}="p4" [l] {A'_i}="p5" "p0":@{<-}"p1"^-{p_{i1}}:"p2"^-{p_{i2}}:"p3"^-{p_{i3}}:@{<-}"p4"^-{q_{i3}}:@{<-}"p5"^-{q_{i2}}:"p0"^-{q_{i1}} "p1":"p5"_{f_{0i}} "p2":"p4"^{f_{1i}} "p1":@{}"p4"|-{\tn{pb}}} \]
in $\ca E$, for $0 < i \leq n$. We denote such a sequence as $(f_{0i},f_{1i})_i : (p_i)_i \to (q_i)_i$ since it is a morphism of the category $\prod_{i=1}^n \Polyc{\ca E}(X_{i-1},X_i)$. The process of vertically stacking subdivided composites and their morphisms on top of $(f_{0i},f_{1i})_i$ gives a functor
\[ \tn{SdC}(f_{0i},f_{1i})_i : \tn{SdC}(p_i)_i \longrightarrow \tn{SdC}(q_i)_i. \]
The assignation $(f_{0i},f_{1i})_i \mapsto \tn{SdC}(f_{0i},f_{1i})_i$ is functorial, and natural in the evident sense with respect to the restriction and associated polynomial functors defined above. For any choice $t_1$ and $t_2$ of terminal object of $\tn{SdC}(p_i)_i$ and $\tn{SdC}(q_i)_i$ respectively, one has composites
\[ \begin{array}{lccr} {p_n \comp ... \comp p_1 = \tn{ass}(t_1)} &&& {q_n \comp ... \comp q_1 = \tn{ass}(t_2)} \end{array} \]
by Definition \ref{def:composition-of-polynomials}, and a unique morphism $u_{t_1,t_2} : \tn{SdC}(f_{0i},f_{1i})_i(t_1) \to t_2$.
\begin{defn}\label{defn:hor-comp-2-cells}
Let $\ca E$ be a category with pullbacks and $(f_{0i},f_{1i})_i : (p_i)_i \to (q_i)_i$ be a horizontally composable sequence of polynomial morphisms of length $n$. Then in the context just described, the 2-cell
\[ f_n \comp ... \comp f_1 : p_n \comp ... \comp p_1 \longrightarrow q_n \comp ... \comp q_1  \]
is defined to be $\tn{ass}(u_{t_1,t_2})$.
\end{defn}
In the case $n = 2$ the original data and the chosen terminal subdivided composites comprise the solid parts of the diagram,
\[ \xygraph{!{0;(1,0):(0,.5)::} {\bullet}="p0" [ur] {\bullet}="p1" [r] {\bullet}="p2" [dr] {\bullet}="p3" [ur] {\bullet}="p4" [r] {\bullet}="p5" [dr] {\bullet}="p6" {\bullet}="q6" [dl] {\bullet}="q5" [l] {\bullet}="q4" [ul] {\bullet}="q3" [dl] {\bullet}="q2" [l] {\bullet}="q1" [ul] {\bullet}="q0"
"p0":@{<-}"p1"^-{}:"p2"^-{}:"p3"^-{}:@{<-}"p4"^-{}:"p5"^-{}:"p6"^-{}:@{<-}"q5"^-{}:@{<-}"q4"^-{}:"q3"^-{}:@{<-}"q2"^-{}:@{<-}"q1"^-{}:"p0"^-{}
"p1":"q1"_-{f_{01}} "p2":"q2"^-{f_{11}} "p4":"q4"_-{f_{02}} "p5":"q5"^-{f_{12}} "p1":@{}"q2"|-{\tn{pb}} "p4":@{}"q5"|-{\tn{pb}}
"p3" [u(2)] {\bullet}="p7" [u(1.25)] {\bullet}="p8" ([r(1.5)] {\bullet}="p9",[l(1.5)] {\bullet}="p10")
"p8"(:"p7"(:"p2",:"p4"),:"p9"(:"p5",:"p6")) "p10"(:"p0",:"p1",:"p8") "p3" [u] {\scriptsize{\tn{pb}}} ([u(1.2)l] {\scriptsize{\tn{pb}}}, [u(1.2)r] {\scriptsize{\tn{dpb}}})
"q3" [d(2)] {\bullet}="q7" [d(1.25)] {\bullet}="q8" ([r(1.5)] {\bullet}="q9",[l(1.5)] {\bullet}="q10")
"q8"(:"q7"(:"q2",:"q4"),:"q9"(:"q5",:"q6") "q10"(:"q0",:"q1",:"q8") "q3" [d] {\scriptsize{\tn{pb}}} ([d(1.2)l] {\scriptsize{\tn{pb}}}, [d(1.2)r] {\scriptsize{\tn{dpb}}})
"p7":@{.>}@/_{.8pc}/"q7"_(.25){\phi_1} "p8":@{.>}@/^{1pc}/"q8"^(.65){\phi_2} "p9":@{.>}"q9"_(.75){\phi_3} "p10":@{.>}"q10"^(.25){\phi_4}
} \]
and one then induces $\phi_1$ using the pullback defining its codomain, $\phi_2$ and $\phi_3$ are then induced by the universal property of the bottom distributivity pullback, and finally $\phi_4$ is induced by the bottom pullback. One verifies easily that $(\phi_4,\phi_2,\phi_3)$ a morphism of subdivided composites, thus it is $u_{t_1,t_2}$, and so by Definition \ref{defn:hor-comp-2-cells} the composite $f_2 \comp f_1$ is given by $(\phi_4,\phi_3)$.
\begin{thm}\label{thm:bicat-of-polynomials}
Let $\ca E$ be a category with pullbacks. One has a bicategory $\Polyc{\ca E}$, whose objects are those of $\ca E$, whose hom from $X$ to $Y$ is $\Polyc{\ca E}(X,Y)$, horizontal composition of 1-cells is given by Definition \ref{def:composition-of-polynomials}, and horizontal composition of 2-cells is given by Definition \ref{defn:hor-comp-2-cells}.
\end{thm}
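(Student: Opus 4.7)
The plan is to organize the proof around the $n$-ary composites of Definition~\ref{def:composition-of-polynomials} and the canonical isomorphisms of Lemma~\ref{lem:for-poly-composition}, so that all structural isomorphisms in $\Polyc{\ca E}$ are produced from the universal property of terminal subdivided composites, and all coherence conditions then reduce to the uniqueness clause in that universal property.

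First I would define horizontal composition as a functor $\Polyc{\ca E}(Y,Z) \times \Polyc{\ca E}(X,Y) \to \Polyc{\ca E}(X,Z)$. On objects the binary composite $p_2 \comp p_1$ is given by the associated polynomial of a chosen terminal object of $\tn{SdC}(p_1,p_2)$. For the action on cartesian morphisms, a cartesian morphism $f : p \to p'$ (whose defining square is a pullback) induces a functor between the appropriate $\tn{SdC}$ categories by pulling back along the components of $f$ in the squares that define a subdivided composite; applying this induced functor to a chosen terminal object and using the universal property on the target gives the required cartesian morphism between binary composites. Identity 1-cells come from Proposition~\ref{prop:terminal-subdivided-composites} in the case $n=0$, yielding the polynomial $X \leftarrow X \to X \to X$ with all three legs $1_X$ as discussed after Definition~\ref{def:composition-of-polynomials}.

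Next I would build associators and unitors from Lemma~\ref{lem:for-poly-composition}. For a composable triple $(p_1, p_2, p_3)$, applying the two canonical isomorphisms of that lemma to a terminal object of $\tn{SdC}(p_1, p_2, p_3)$ yields $(p_3 \comp p_2) \comp p_1 \iso p_3 \comp p_2 \comp p_1 \iso p_3 \comp (p_2 \comp p_1)$, and the associator is the composite. The unitors $1_Y \comp p \iso p$ and $p \comp 1_X \iso p$ arise by the same mechanism from the unary case $n=1$ (together with the degenerate-case analysis noted after Definition~\ref{def:composition-of-polynomials}), comparing the subdivided composite over $(p)$ with the one obtained by inserting an identity polynomial on either side.

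Finally one verifies naturality and coherence. Naturality of associators and unitors in cartesian morphisms is a direct consequence of the uniqueness part of the universal property of terminal subdivided composites. For the pentagon, both composites around the diagram for $(p_1, p_2, p_3, p_4)$ are isomorphisms between an iterated binary composite and the quaternary composite $p_4 \comp p_3 \comp p_2 \comp p_1$ defined as a terminal object of $\tn{SdC}(p_1, p_2, p_3, p_4)$; since both are obtained by assembling instances of Lemma~\ref{lem:for-poly-composition}, and any two parallel isomorphisms induced by the universal property of a terminal object must agree, the pentagon commutes automatically. The triangle axiom is handled analogously via the ternary case with an identity in the middle. The main obstacle I expect is the first step: rigorously checking that horizontal composition is functorial on cartesian morphisms, which requires tracking how cartesian squares interact with the nested pullbacks and distributivity pullbacks used to build subdivided composites, and which in turn uses the ``composition/cancellation'' and ``cube'' principles of Lemmas~\ref{lem:distpb-composition-cancellation} and~\ref{lem:distpb-cube}. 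Once that functoriality is in place the entire bicategorical structure follows essentially formally from terminal-object uniqueness.
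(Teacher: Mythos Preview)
Your proposal is correct and follows essentially the same approach as the paper: both reduce the bicategory coherence axioms to the uniqueness property of terminal subdivided composites, using Lemma~\ref{lem:for-poly-composition} to connect iterated binary composites with the $n$-ary composites of Definition~\ref{def:composition-of-polynomials}. The paper's version is slightly more streamlined in that it proves by induction on $n$ that \emph{every} iterated binary bracketing of length $n$ is the associated polynomial of a terminal object of $\tn{SdC}(p_i)_i$, and then treats all coherence diagrams uniformly as images under $\tn{ass}$ of diagrams whose vertices are terminal; you instead verify the pentagon and triangle separately from the quaternary and ternary cases, which is the same argument unrolled. Your explicit flagging of the functoriality of horizontal composition on cartesian morphisms as the main nontrivial step is apt---the paper leaves this implicit---and your suggested mechanism (tracking how cartesian squares interact with the pullbacks and distributivity pullbacks via Lemmas~\ref{lem:distpb-composition-cancellation} and~\ref{lem:distpb-cube}) is the right one.
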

\begin{proof}
By induction on $n$, using the fact that the functors $p_n \cdot (-)$ and $(-) \cdot p_1$ preserve terminal objects, and Lemma \ref{lem:for-poly-composition}, it follows that any iterated binary composite of polynomials of length $n$, is a composite in the sense of Definition \ref{def:composition-of-polynomials}. That is, such an iterated composite is the associated polynomial of a terminal subdivided polynomial, which arises from the composable sequence of polynomials that participates in the given iterated binary composite. Hence between any two alternative brackettings of a given composite, there is a unique isomorphism of their underlying subdivided composites, giving rise to a ``coherence'' isomorphism of the composites themselves upon application of ``$\tn{ass}$''. Any diagram of such coherence isomorphisms must commute, since it is the image by the appropriate ``$\tn{ass}$'' functor, of a diagram whose vertices are all terminal subdivided composites. Thanks to our conventions regarding chosen pullbacks and chosen distributivity pullbacks of identities described in Section \ref{ssec:dpb}, the unit coherence isomorphisms here turn out to be identities.

The functoriality of horizontal composition comes from the functoriality of $(f_{0i},f_{1i})_i \mapsto \tn{SdC}(f_{0i},f_{1i})_i$ and the naturality of $\tn{SdC}(f_{0i},f_{1i})_i$ with respect to the ``$\tn{ass}$'' functors. It remains to verify the naturality of the coherence isomorphisms identified in the previous paragraph. To this end we suppose that a horizontally composable sequence $(f_{0i},f_{1i})_i : (p_i)_i \to (q_i)_i$ of morphisms of polynomials of length $n$, and binary brackettings $\beta_1$ and $\beta_2$ of $n$ things is given. Let us denote by $t_{\beta_1}(p_i)_i$, $t_{\beta_2}(p_i)_i$, $t_{\beta_1}(q_i)_i$ and $t_{\beta_2}(q_i)_i$ the terminal subdivided composites witnessing the iterated binary composites of $(p_i)_i$ and $(q_i)_i$ via the given brackettings. In $\tn{SdC}(q_i)_i$ one has the diagram
\[ \xygraph{!{0;(4,0):(0,.25)::} {\tn{SdC}(f_{0i},f_{1i})_i(t_{\beta_1}(p_i)_i)}="p0" [r] {\tn{SdC}(f_{0i},f_{1i})_i(t_{\beta_2}(p_i)_i)}="p1" [d] {t_{\beta_2}(q_i)_i}="p2" [l] {t_{\beta_1}(q_i)_i}="p3" "p0":@<1ex>"p1"^-{}:"p2"^-{}:@{<-}@<-1ex>"p3"^-{}:@{<-}"p0"^-{}:@<-1ex>@{<-}"p1" "p2":@<1ex>"p3"} \]
in which the top horizontal arrows are the effect of applying $\tn{SdC}(f_{0i},f_{1i})_i$ to the unique morphisms, and the other morphisms are determined uniquely and both squares commute because $t_{\beta_1}(q_i)_i$ and $t_{\beta_2}(q_i)_i$ are terminal. Applying $\tn{ass} : \tn{SdC}(q_i)_i \to \Polyc{\ca E}(X_0,X_n)$ to this diagram gives the squares witnessing the naturality of the coherence morphisms.
\end{proof}
A span in $\ca E$ as on the left
\[ \xygraph{{\xybox{\xygraph{{X}="p0" [r] {Z}="p1" [r] {Y}="p2" "p0":@{<-}"p1"^-{s}:"p2"^-{t}}}}
[r(4)]
{\xybox{\xygraph{{X}="p0" [r] {Z}="p1" [r] {Z}="p2" [r] {Y}="p3" "p0":@{<-}"p1"^-{s}:"p2"^-{1_Z}:"p3"^-{t}}}}} \]
may be identified as a polynomial in which the middle map is an identity as on the right. Polynomial composition of spans coincides exactly with span composition, giving us a strict inclusion
\[ \Span{\ca E} \hookrightarrow \Polyc{\ca E} \]
of bicategories which is the identity on objects and locally fully faithful. For a given map $f:X \to Y$ in $\ca E$, we denote by $\lft{f}:X \to Y$ and $\rgt{f}:Y \to X$ the polynomials
\[ \xygraph{{\xybox{\xygraph{{X}="p1" [r] {X}="p2" [r] {X}="p3" [r] {Y}="p4" "p1":@{<-}"p2"^-{1}:"p3"^-{1}:"p4"^-{f}}}} [r(4)] {\xybox{\xygraph{{Y}="p1" [r] {X}="p2" [r] {X}="p3" [r] {X}="p4" "p1":@{<-}"p2"^-{f}:"p3"^-{1}:"p4"^-{1}}}}} \]
respectively. These are spans, it is well known that one has $\lft{f} \ladj \rgt{f}$ and that this is part of the basic data of the proarrow equipment $(\ca E,\Span{\ca E})$ \cite{Wood-Proarrows-I, Wood-Proarrows-II}. By the above strict inclusion, this extends to another proarrow equipment $(\ca E,\Polyc{\ca E})$, and all this at the generality of a category $\ca E$ with pullbacks. It is worth noting that polynomial composites of the form $\lft{f} \comp p$ and $q \comp \rgt{g}$ are particularly easy, these being
\[ \xygraph{{\xybox{\xygraph{{\bullet}="p1" [r] {\bullet}="p2" [r] {\bullet}="p3" [r] {\bullet}="p4" "p1":@{<-}"p2"^-{p_1}:"p3"^-{p_2}:"p4"^-{fp_3}}}} [r(4)] {\xybox{\xygraph{{\bullet}="p1" [r] {\bullet}="p2" [r] {\bullet}="p3" [r] {\bullet}="p4" "p1":@{<-}"p2"^-{gq_1}:"p3"^-{q_2}:"p4"^-{q_3}}}}} \]
respectively.

The homs of $\Polyc{\ca E}$ interact well with the slices of $\ca E$. For all $X$ and $Y$ one has obvious forgetful functors
\[ \xygraph{!{0;(2,0):} {\ca E/X}="l" [r] {\Polyc{\ca E}(X,Y)}="m" [r] {\ca E/Y}="r" "l":@{<-}"m"^-{l_{X,Y}}:"r"^-{r_{X,Y}}} \]
and we refer to these as the \emph{left} and \emph{right projections of the homs of} $\Polyc{\ca E}$. From the above descriptions of composites of the form $\lft{f} \comp p$ and $q \comp \rgt{g}$, one obtains immediately the sense in which these forgetful functors are natural.
\begin{lem}\label{lem:naturality-Poly-hom-projections}
For all $f:Y \to Z$ and $g:X \to W$ one has
\[ \begin{array}{rclccrcl} {\Sigma_gl_{X,Y}} & {=} & {l_{W,Y}((-) \comp \rgt{g})} &&& {\Sigma_fr_{X,Y}} & {=} & {r_{X,Z}(\lft{f} \comp (-))} \\ {l_{X,Y}} & {=} & {l_{X,Z}(\lft{f} \comp (-))} &&& {r_{X,Y}} & {=} & {r_{W,Y}((-) \comp \rgt{g})} \end{array} \]
\end{lem}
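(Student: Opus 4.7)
The plan is to verify each of the four equalities by a direct reading of the explicit descriptions of $\lft{f} \comp p$ and $p \comp \rgt{g}$ displayed immediately before the lemma, followed by an equally direct check on cartesian morphisms. Recall that on objects $l_{X,Y}(p) = p_1$ and $r_{X,Y}(p) = p_3$ for a polynomial $p = (p_1,p_2,p_3)$. From the displayed composites, $\lft{f} \comp p$ has components $(p_1, p_2, fp_3)$ and $p \comp \rgt{g}$ has components $(gp_1, p_2, p_3)$. Reading off either projection of these polynomials yields the four stated equalities on objects: $l_{W,Y}(p \comp \rgt{g}) = gp_1 = \Sigma_g l_{X,Y}(p)$; $r_{X,Z}(\lft{f} \comp p) = fp_3 = \Sigma_f r_{X,Y}(p)$; $l_{X,Z}(\lft{f} \comp p) = p_1 = l_{X,Y}(p)$; and $r_{W,Y}(p \comp \rgt{g}) = p_3 = r_{X,Y}(p)$.

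Next I would check the equalities on cartesian morphisms. For $(f_0, f_1) : p \to p'$ in $\Polyc{\ca E}(X,Y)$, the induced cartesian morphisms $\lft{f} \comp (f_0, f_1)$ and $(f_0, f_1) \comp \rgt{g}$ have $0$- and $1$-components equal to $f_0$ and $f_1$ themselves; the polynomials $\lft{f}$ and $\rgt{g}$ contribute only identities in the apex data, since their middle and outer legs are identities and the relevant pullbacks/distributivity pullbacks collapse accordingly. Applying the projections therefore returns $f_0$ or $f_1$, matching the right-hand sides of the four equations in exactly the way the object check does.

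The only point requiring any attention at all is the underlying claim that the chosen terminal subdivided composites computing $\lft{f} \comp p$ and $p \comp \rgt{g}$ really do take the simple form recalled above. This rests on the harmless conventions fixed at the end of section(\ref{sec:dpb}): chosen pullbacks of identities are identities, and the diagrams displayed there are among the chosen distributivity pullbacks. Under these conventions, the construction of section(\ref{sec:bicats-of-polys}) applied to $\rgt{g}$ on the right (or $\lft{f}$ on the left) sees every distributivity pullback and every pullback along an identity trivialise, so the resulting terminal subdivided composite is exactly the one whose associated polynomial is displayed before the lemma. With this observation — which the paper has in effect already made — the lemma reduces to the component-wise book-keeping above, and there is no genuine obstacle.
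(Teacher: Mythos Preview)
Your proposal is correct and is essentially the approach the paper takes: the paper gives no explicit proof of this lemma, merely remarking that ``from the above descriptions of composites of the form $\lft{f} \comp p$ and $q \comp \rgt{g}$, one obtains immediately'' the stated equalities. Your write-up simply spells out this immediate verification on objects and on cartesian morphisms, together with the bookkeeping about why the chosen composites take the displayed form --- all of which the paper leaves to the reader.
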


\subsection{Polynomial functors.}
\label{ssec:polynomial-functors}
Let $\ca E$ be a category with pullbacks. In this section we define a homomorphism of bicategories
\[ \begin{array}{lccr} {\PFun{\ca E} : \Polyc{\ca E} \longrightarrow \CAT} &&& {X \mapsto \ca E/X} \end{array} \]
with object map as indicated, in Theorem \ref{thm:polnomial-functor-homomorphism}. Given a polynomial $p:X \to Y$ in $\ca E$, the functor $\PFun{\ca E}(p):\ca E/X \to \ca E/Y$ is defined to be the composite $\Sigma_{p_3}\Pi_{p_2}\Delta_{p_1}$, which for the sake of brevity, will also be denoted as $p(-):\ca E/X \to \ca E/Y$. In more elementary terms the effect of $p(-)$ on an object $x:C \to X$ of $\ca E/X$ is described by the following commutative diagram:
\[ \xygraph{!{0;(1.5,0):(0,.5)::} {X}="p1" [r] {A}="p2" [r] {B}="p3" [r] {Y}="p4" "p1" [u] {C}="s" "p2" [u] {C_2}="mdpb" [u] {C_3}="tldpb" [r] {C_4}="trdpb"
"p1":@{<-}"p2"_-{p_1}:"p3"_-{p_2}:"p4"_-{p_3} "mdpb"(:"s"_-{}:"p1"_-{x},:"p2"^-{},:@{<-}"tldpb"_-{}:"trdpb"^-{}(:"p3"|-{},:"p4"^-{p(x)}|-{}="codeq"))
"mdpb" ([r(.5)] {\scriptstyle{dpb}}, [l(.5)d(.5)] {\scriptstyle{pb}})} \]
Similarly one may, by exploiting the universal property of the pullback and distributivity pullback in this description, induce the maps which provide the arrow map of $p(-)$. These explicit descriptions together with Lemma \ref{lem:naturality-Poly-hom-projections} enables us to catalogue all the ways one can use the composition of $\Polyc{\ca E}$ to describe the functor $p(-)$, and we record this in
\begin{lem}\label{lem:all-descriptions-associated-polynomial-functor}
Let $p:X \to Y$ be a polynomial in $\ca E$.
\begin{enumerate}
\item  Given $x:C \to X$ in $\ca E/X$, one has
\[ p(x) = r_{Z,Y}(p \comp \lft{x} \comp \rgt{g}) \]
for all $Z$ and $g:C \to Z$.
\item  Given $x_1:C_1 \to X$, $x_2:C_2 \to X$ and $h:C_1 \to C_2$ over $X$, one has
\[ p(h) = r_{Z,Y}(p \comp h' \comp \rgt{g}) \]
for all $Z$ and $g:C_2 \to Z$, where $h':x_2^{\bullet} \to x_1^{\bullet}\rgt{h}$ is the mate of the identity $x_2^{\bullet}\lft{h}=x_1^{\bullet}$ via $\lft{h} \ladj \rgt{h}$.
\end{enumerate}
\end{lem}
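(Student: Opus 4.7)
My plan for part (1) is to reduce the claim to a direct computation of the composite $p \comp \lft{x}$ via the subdivided composite machinery, then match that computation against the explicit construction of $p(x)$ from the preceding paragraph. First, apply Lemma \ref{lem:naturality-Poly-hom-projections}: the identity $l_{W,Y}((-) \comp \rgt{g}) = \Sigma_g\, l_{X,Y}$ strips the $\rgt{g}$ off, showing that $l_{Z,Y}(p \comp \lft{x} \comp \rgt{g})$ is obtained from $l_{C,Y}(p \comp \lft{x})$ by composition with $g$. This confirms independence of $(Z,g)$ up to the expected base-change, and reduces the claim to identifying the relevant projection of the composite $p \comp \lft{x}$ with the corresponding leg of the $p(-)$-diagram.

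Next, I compute $p \comp \lft{x}$ from a terminal subdivided composite of the pair $(\lft{x}, p)$. Because $\lft{x}$ has middle map $1_C$, the defining pullback square for a subdivided composite forces the connecting map $Y_0 \to Y_1$ to be an isomorphism, so the combinatorics collapse. What remains is precisely: a pullback of $x$ along $p_1$ (matching the object $C_2$ of the $p(-)$-diagram), followed by a distributivity pullback around $(p_2, C_2 \to A)$ (producing $C_3$ and $C_4$). This is exactly the diagram defining $p(x)$, so the outer maps of the composite polynomial coincide with the expected ones and the desired identity follows.

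Part (2) follows by functoriality of $p \comp (-) \comp \rgt{g}: \Polyc{\ca E}(X,Y) \to \Polyc{\ca E}(Z,Y)$ applied to the cartesian morphism $h'$. Unpacking $h'$ as the mate of $x_2^\bullet\lft{h}=x_1^\bullet$ gives a commutative square over $X$ whose top component is $h$. Passing this square through the composition with $p$ and $\rgt{g}$ and then projecting via $l_{Z,Y}$ produces a morphism of subdivided composites built from the universal properties of the pullbacks and distributivity pullbacks defining $p(x_1)$ and $p(x_2)$; but this is precisely how the arrow map of $\Sigma_{p_3}\Pi_{p_2}\Delta_{p_1}$ acts on $h$.

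The main obstacle is the diagram chase in the second paragraph: verifying map-by-map that a terminal subdivided composite over $(\lft{x}, p)$ matches the diagram defining $p(x)$. This uses the universal properties from Definitions \ref{def:pb-around} and \ref{def:dpb}, together with the normalising conventions made at the end of Section \ref{sec:dpb}, which guarantee that distributivity pullbacks along identities are themselves identities and so make the degenerate parts of the composite trivial.
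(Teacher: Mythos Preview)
Your proposal is correct and follows exactly the approach the paper indicates: the paper does not give a formal proof, only the sentence preceding the lemma stating that it follows from the explicit pullback/distributivity-pullback description of $p(x)$ together with Lemma~\ref{lem:naturality-Poly-hom-projections}. Your argument is the natural expansion of this --- use Lemma~\ref{lem:naturality-Poly-hom-projections} to strip off $\rgt{g}$ and reduce to $Z=C$, then observe that the terminal subdivided composite over $(\lft{x},p)$ is literally the diagram defining $p(x)$; the one small wrinkle is your phrase ``independence of $(Z,g)$ up to the expected base-change'', which slightly overstates the dependence, since the relevant identity in Lemma~\ref{lem:naturality-Poly-hom-projections} gives literal equality of the projection for all $(Z,g)$.
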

To prove Theorem \ref{thm:polnomial-functor-homomorphism} we exhibit an analogous result, Lemma \ref{lem:all-descriptions-2cell-map-P}, giving all the ways of expressing $\PFun{\ca E}$'s 2-cell map in terms of composition in $\Polyc {\ca E}$. Preliminary to this result we reconcile two ways of describing $\PFun{\ca E}$'s 2-cell map -- that given in \cite{GambinoKock-PolynomialFunctors} versus a direct description in terms of morphisms of induced from pullbacks and distributivity pullbacks, in Lemma \ref{lem:explicit-polymap->cart-transformation}. Lemmas \ref{lem:left-BC} and \ref{lem:right-BC} are preliminary to Lemma \ref{lem:explicit-polymap->cart-transformation}. The reader not interested in such technical details is encouraged to skip ahead to the statement of Theorem \ref{thm:polnomial-functor-homomorphism} below.

The description of $\PFun {\ca E}$'s 2-cell map given in \cite{GambinoKock-PolynomialFunctors}, is to associate to a given cartesian morphism $f:p \to q$ between polynomials from $X$ to $Y$, the following natural transformation
\begin{equation}\label{diag:2-cell-map-of-P-external}
\xygraph{!{0;(1.25,0):(0,.8)::} {\ca E/X}="d" [ru] {\ca E/A}="me" [r] {\ca E/B}="e" [dr] {\ca E/Y}="c" [dl] {\ca E/B'}="e2" [l] {\ca E/A'}="me2" "d":"me"^-{\Delta_{p_1}}:"e"^-{\Pi_{p_2}}:"c"|(.4){}="pd2"^-{\Sigma_{p_3}} "d":"me2"_-{\Delta_{q_1}}:"e2"|-{}="pc2"_-{\Pi_{q_2}}:"c"|(.4){}="pc2"_-{\Sigma_{q_3}} "me2":"me"|-{\Delta_{f_0}} "e2":"e"|-{\Delta_{f_1}} "d" [r(.6)] {\iso} [r(.9)] {\iso} "pd2":@{}"pc2"|(.35){}="d2"|(.65){}="c2" "d2":@{=>}"c2"}
\end{equation}
in which the isomorphism in the middle is a Beck-Chevalley isomorphism, and $\Sigma_{p_3}\Delta_{f_1} \to \Sigma_{q_3}$ is the mate of the identity via $\Sigma_{f_1} \ladj \Delta_{f_1}$. The advantage of this description is that the functoriality of the resulting hom functor
\[ (\PFun{\ca E})_{X,Y} : \Polyc{\ca E}(X,Y) \longrightarrow \CAT(\ca E/X,\ca E/Y) \]
is evident. We will show that the component at $x:C \to X$ of $(\PFun{\ca E})_{X,Y}(f)$ is given by the map $f_{4,x}$, constructed in
\begin{equation}\label{diag:2-cell-map-of-P-internal}
\xygraph{!{0;(1.5,0):(0,.375)::} {X}="X" ([ur] {A}="A" [r] {B}="B" [dr] {Y}="Y", [dr] {A'}="Ap" [r] {B'}="Bp") "X" [l] {C}="C" ([u(2)r] {C_2}="C2" [ur] {C_3}="C3" [r] {C_4}="C4",[d(2)r] {C'_2}="Cp2" [dr] {C'_3}="Cp3" [r] {C'_4}="Cp4")
"X":@{<-}"A"^-{p_1}:"B"^-{p_2}:"Y"^-{p_3} "X":@{<-}"Ap"_-{q_1}:"Bp"_-{q_2}:"Y"_-{q_3} "C":@{<-}"C2":@{<-}"C3":"C4"^-{}:"Y"^-{p(x)} "C":@{<-}"Cp2":@{<-}"Cp3":"Cp4"_-{}:"Y"_-{q(x)} "C":"X"|-{x} "C2":"A" "Cp2":"Ap" "C4":"B" "Cp4":"Bp" "A":"Ap"^-{f_0} "B":"Bp"^-{f_1} "C2":@{.>}@/_{1.5pc}/"Cp2"|(.35){f_{2,x}} "C3":@{.>}@/_{1.5pc}/"Cp3"|-{f_{3,x}} "C4":@{.>}@/_{1.5pc}/"Cp4"|(.6){f_{4,x}}
"C2" [d] {\scriptstyle{pb}} "Cp2" [u] {\scriptstyle{pb}} "C3" [r(.25)d] {\scriptstyle{dpb}} "Cp3" [r(.25)u] {\scriptstyle{dpb}} "A" [r(.5)d] {\scriptstyle{pb}}}
\end{equation}
To construct this diagram one induces $f_{2,x}$ using $f_0$ and the bottom pullback, and then it follows that the square $(C_2,A,A',C_2')$ is a pullback. One can then induce $f_{3,x}$ and $f_{4,x}$ using the bottom distributivity pullback. Lemma \ref{lem:distpb-cube} ensures that $(C_4,B,B',C_4')$ is a pullback, and elementary properties of pullbacks ensure that $(C_2,C_3,C_3',C_2')$ and $(C_3,C_4,C_4',C_3')$ are also pullbacks.

Our next task is to explain why (\ref{diag:2-cell-map-of-P-internal}) really does describe the components of (\ref{diag:2-cell-map-of-P-external}). This verification begins by unpacking, for a given commuting square as shown on the right
\[ \xygraph{!{0;(3,0):} {\xybox{\xygraph{!{0;(1.2,0):} {A}="tl" [r] {B}="tr" [d] {D}="br" [l] {C}="bl" "tl":"tr"^-{f}:"br"^-{k}:@{<-}"bl"^-{g}:@{<-}"tl"^-{h}}}} [r]
{\xybox{\xygraph{!{0;(1.2,0):} {\ca E/A}="tl" [r] {\ca E/B}="tr" [d] {\ca E/D}="br" [l] {\ca E/C}="bl" "tl":"tr"^-{\Sigma_f}:@{<-}"br"^-{\Delta_k}:@{<-}"bl"^-{\Sigma_g}:"tl"^-{\Delta_h} [d(.5)r(.35)] :@{=>}[r(.3)]^-{\alpha}}}} [r]
{\xybox{\xygraph{!{0;(1.2,0):} {\ca E/A}="tl" [r] {\ca E/B}="tr" [d] {\ca E/D}="br" [l] {\ca E/C}="bl" "tl":@{<-}"tr"^-{\Delta_f}:"br"^-{\Pi_k}:"bl"^-{\Delta_g}:@{<-}"tl"^-{\Pi_h} [d(.5)r(.35)] :@{<=}[r(.3)]^-{\beta}}}}} \]
the left Beck-Chevalley cell $\alpha$, and when $h$ and $k$ are exponentiable, the right Beck-Chevalley cell $\beta$, in elementary terms. Since $\alpha$ is obtained by taking the mate of the identity $\Sigma_{g}\Sigma_{h}=\Sigma_{k}\Sigma_{f}$ via the adjunctions $\Sigma_h \ladj \Delta_h$ and $\Sigma_k \ladj \Delta_k$, it follows that $\alpha$ is uniquely determined by the equation $\Sigma_k\varepsilon^{(1)}_f = (\varepsilon^{(1)}_g\Sigma_k)(\Sigma_g\alpha)$. On the other hand one has the commutative diagram
\begin{equation}\label{eq:left-BC}
\xygraph{!{0;(1,0):(0,1.2)::} {A_3}="t" [dr] {A_2}="tl" [r] {A}="tm" [r] {B}="tr" [d] {D}="br" [l] {C}="bm" [l] {C_2}="bl" "tl":"tm"^-{\Delta_hx}:"tr"^-{f}:"br"^-{k}:@{<-}"bm"^-{g}:@{<-}"bl"^-{x}:@{<-}"tl"|(.6){\varepsilon^{(1)}_{h,x}}:@{.>}"t"_-{\alpha_x}(:@/^{1pc}/"tr"^-{\Delta_k(gx)},:@/_{1pc}/"bl"_-{\varepsilon^{(1)}_{k,gx}}) "tm":"bm"^-{h} "tl":@{}"bm"|-*{\scriptstyle{pb}}}
\end{equation}
and the above equation, for the component $x$, is witnessed by the commutativity of the bottom triangle. Thus
\begin{lem}\label{lem:left-BC}
The components of $\alpha$ are induced as in (\ref{eq:left-BC}).
\end{lem}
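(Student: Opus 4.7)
The plan is to construct a candidate map $\tilde{\alpha}_x : A_2 \to A_3$ using the universal property of the pullback defining $A_3$ in (\ref{eq:left-BC}), and then to verify that this candidate coincides with the mate by invoking the characterization stated just before the lemma.

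First I check that the two candidate legs $f\Delta_h x : A_2 \to B$ and $\varepsilon^{(1)}_{h,x} : A_2 \to C_2$ are compatible with the pullback square of boundary $(\Delta_k(gx), k, gx, \varepsilon^{(1)}_{k,gx})$. This reduces to the identity $kf\Delta_h x = gx\varepsilon^{(1)}_{h,x}$, which follows by chaining commutativity of the original square $(f,k,g,h)$ (giving $kf = gh$) with the defining pullback of $\Delta_h x$ (giving $h\Delta_h x = x\varepsilon^{(1)}_{h,x}$). The pullback property then produces a unique $\tilde{\alpha}_x : A_2 \to A_3$ satisfying $\Delta_k(gx)\tilde{\alpha}_x = f\Delta_h x$ and $\varepsilon^{(1)}_{k,gx}\tilde{\alpha}_x = \varepsilon^{(1)}_{h,x}$.

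Second, I verify $\tilde{\alpha}_x = \alpha_x$. Transposing the characterizing mate equation across $\Sigma_k \ladj \Delta_k$ at the component $x$, and using the identification $\Sigma_k\Sigma_f = \Sigma_g\Sigma_h$, reduces the mate condition to the equality $\varepsilon^{(1)}_{k,gx}\cdot\alpha_x = \varepsilon^{(1)}_{h,x}$ of underlying morphisms $A_2 \to C_2$ (noting that $\Sigma_k$ and $\Sigma_g$ do not alter underlying maps). Combined with the fact that $\alpha_x$ is by construction a morphism in $\ca E/B$, so $\Delta_k(gx)\cdot\alpha_x = f\Delta_h x$, these are precisely the two universal pullback conditions already used to define $\tilde{\alpha}_x$; hence $\alpha_x = \tilde{\alpha}_x$ by uniqueness.

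I expect no serious obstacle: the only step requiring care is the translation in the second paragraph, where the abstract mating condition must be unpacked into an elementary equation using the pullback description of $\Delta_k$ from (\ref{diag:Counits-BaseChange}). Everything else is immediate from the pullback universal property.
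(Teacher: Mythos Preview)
Your proposal is correct and follows essentially the same approach as the paper. The paper's argument, given in the paragraph immediately preceding the lemma, is that the mate $\alpha$ is uniquely determined by the equation relating the counits, and that the component at $x$ of this equation is precisely the commutativity of the bottom triangle in (\ref{eq:left-BC}); combined with the fact that $\alpha_x$ lives over $B$ (giving the other leg), this pins down $\alpha_x$ via the pullback universal property, just as you do.
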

Moreover we can see directly from (\ref{eq:left-BC}) that if the original square is a pullback, then $\alpha$ is invertible, and the converse follows by considering the case $x=1_C$. The right Beck-Chevalley cell $\beta$ may be obtained by taking the mate of $\Delta_f\Delta_k \iso \Delta_h\Delta_g$ via $\Delta_h \ladj \Pi_h$ and $\Delta_k \ladj \Pi_k$. Thus it is uniquely determined by the commutativity of
\begin{equation}\label{eq:right-BC-external}
\xygraph{!{0;(2,0):(0,.5)::} {\Delta_h\Delta_g\Pi_k}="tl" [r] {\Delta_h\Pi_h\Delta_f}="tr" [d] {\Delta_f}="br" [l] {\Delta_f\Delta_k\Pi_k}="bl" "tl":"tr"^-{\Delta_h\beta}:"br"^-{\varepsilon^{(2)}_h\Delta_f}:@{<-}"bl"^-{\Delta_f\varepsilon^{(2)}_k}:@{<-}"tl"^-{\tn{coh.}\Pi_k}} 
\end{equation}
whereas inside $\ca E$ for all $B_2$ and $x:B_2 \to B$ we have
\begin{equation}\label{eq:right-BC}
\xygraph{!{0;(2,0):(0,.5)::} {A}="a" [r] {B}="b" [d] {D}="d" [l] {C}="c" [l] {C_3}="c3" [u] {A_3}="a3" [u(.5)r(.5)] {A_2}="a2" [r(2)] {B_2}="b2" [u(.5)r(.5)] {B_3}="b3" [d(3)] {D_2}="d2" [l(3.5)] {C_2}="c2" [u(3)] {A_4}="a4"
"a":"b"_-{f}:"d"_-{k}:@{<-}"c"_-{g}:@{<-}"a"_-{h} "a":@{<-}"a2":@{<-}"a3":"c3":"c":@{<-}"c2":@{<-}"a4"(:"a2":"b2",:"b3"(:"b2":"b"^-{x},:"d2"(:"d",:@{<-}"c2"))) "a4":@{.>}"a3" "c2":@{.>}"c3"|-{\beta_x} "a":@{}"d"|-{}="centre" "centre" ([u(.75)] {\scriptstyle{pb}} [u(.5)] {\scriptstyle{pb}}, [d] {\scriptstyle{pb}}, [l] {\scriptstyle{dpb}}, [r] {\scriptstyle{dpb}})}
\end{equation}
constructed as follows. Take the distributivity pullback of $x$ along $k$ and then pullback the result along $g$. Pullback $x$ along $f$ and then take the distributivity pullback of the result along $h$. Then form the top pullback and induce the morphism $A_4 \to C_2$. By the elementary properties of pullbacks, it follows that the squares $(A_4,B_3,D_2,C_2)$ and $(A_4,A,C,C_2)$ are pullbacks. From this last we induce the dotted arrows using the left distributivity pullback.
\begin{lem}\label{lem:right-BC}
The components of $\beta$ are induced as in (\ref{eq:right-BC}).
\end{lem}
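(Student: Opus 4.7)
The plan is to invoke the characterization of $\beta$ as the unique natural transformation making (\ref{eq:right-BC-external}) commute, and to verify that the morphism $\beta_x : C_2 \to C_3$ constructed in (\ref{eq:right-BC}) is the $x$-component. First I would interpret every term of (\ref{eq:right-BC-external}), evaluated at $x : B_2 \to B$, in elementary terms. Using the conventions adopted at the end of Section \ref{sec:dpb}, the bottom distributivity pullback exhibits $\Pi_k(x) : D_2 \to D$, with $B_3 = B \times_D D_2$ being the domain of $\Delta_k\Pi_k(x)$ and $\varepsilon^{(2)}_{k,x} : B_3 \to B_2$ the counit component. Likewise the top distributivity pullback exhibits $\Pi_h\Delta_f(x) : C_3 \to C$, with $A_3 = A \times_C C_3$ the domain of $\Delta_h\Pi_h\Delta_f(x)$ and $\varepsilon^{(2)}_{h,\Delta_f x} : A_3 \to A_2$ the counit component.

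Next I would observe that $A_4$, the top pullback in (\ref{eq:right-BC}), is a common model for both $\Delta_f\Delta_k\Pi_k(x)$ and $\Delta_h\Delta_g\Pi_k(x)$. Elementary pullback pasting, together with the hypothesis that $(A,B,D,C)$ is a pullback, gives $A_4 = A_2 \times_{B_2} B_3 = A \times_B B_3 = A \times_D D_2 = A \times_C C_2$. Under these identifications, the coherence isomorphism $\Delta_h\Delta_g \iso \Delta_f\Delta_k$ at $\Pi_k(x)$ becomes the identity on $A_4$; the component $\Delta_f(\varepsilon^{(2)}_{k,x})$ is the top pullback leg $A_4 \to A_2$; and $\Delta_h$ sends a morphism $C_2 \to C_3$ over $C$ to a morphism $A_4 \to A_3$. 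Consequently the defining equation (\ref{eq:right-BC-external}) at $x$ reduces to the elementary requirement that the composite $\varepsilon^{(2)}_{h,\Delta_f x} \circ \Delta_h(\beta_x) : A_4 \to A_3 \to A_2$ equal the top pullback leg $A_4 \to A_2$.

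Finally I would verify this equation via the universal property of the top distributivity pullback. Terminality in $\tn{PB}(h, \Delta_f x)$ says that a morphism $\beta_x : C_2 \to C_3$ over $C$, together with a lift $A_4 \to A_3$, corresponds bijectively to a pullback around $(h, \Delta_f x)$ based at $C_2$ with specified top leg into $A_2$. The diagram (\ref{eq:right-BC}) constructs exactly such a pullback: the outer region $(A_4, A_2, A, C, C_2)$ is a pullback by the identification $A_4 = A \times_C C_2$. Hence the construction produces a unique $\beta_x$ satisfying the elementary form of (\ref{eq:right-BC-external}), and so $\beta_x$ is the $x$-component of $\beta$. The principal obstacle will be the careful bookkeeping needed to match the coherence isomorphism with the identity on $A_4$ under compatible choices of pullback representatives; once that identification is in place, the rest is a direct application of the universal property of the distributivity pullback.
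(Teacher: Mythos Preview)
Your approach is essentially the paper's own: both arguments reduce the characterising square (\ref{eq:right-BC-external}) at $x$ to the commuting triangle $(A_4,A_3,A_2)$, identify $A_4 \to A_2$ with the lower composite of (\ref{eq:right-BC-external}), and then read off $\beta_x$ as the unique map to the terminal object of $\tn{PB}(h,\Delta_f x)$ supplied by the left distributivity pullback. The paper phrases the key step as observing that $(A_4,A_3,C_3,C_2)$ is a pullback, which is exactly what makes your identification of $\Delta_h(\beta_x)$ with the induced map $A_4 \to A_3$ go through.

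One correction: you invoke ``the hypothesis that $(A,B,D,C)$ is a pullback'', but there is no such hypothesis---the lemma is stated for an arbitrary commuting square with $h$ and $k$ exponentiable, and the pullback case is treated only afterwards. Fortunately your chain $A_4 = A_2 \times_{B_2} B_3 = A \times_B B_3 = A \times_D D_2 = A \times_C C_2$ does not actually use it: the second equality comes from the constructed pullback defining $A_2$, the third from the right distributivity pullback being in particular a pullback ($B_3 = B \times_D D_2$), and the fourth from $C_2 = C \times_D D_2$ together with $gh = kf$. So simply drop that phrase and your argument stands.
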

\begin{proof}
Note that the square $(A_4,A_3,C_3,C_2)$ is also a pullback, and so one can identify the commuting triangle $(A_4,A_3,A_2)$ with (\ref{eq:right-BC-external}), once one has understood that $A_4 \to A_2$ is the (appropriate component of) the lower composite in (\ref{eq:right-BC-external}).
\end{proof}
From this construction and Lemma \ref{lem:distpb-cube} one may witness directly that if the original square is a pullback, then $\beta$ is invertible, and the converse is easily witnessed by considering the case $x=1_B$. With these details sorted out we can now proceed to the proof of
\begin{lem}\label{lem:explicit-polymap->cart-transformation}
The component at $x:C \to X$ of the natural transformation described in (\ref{diag:2-cell-map-of-P-external}) is the morphism $f_{4,x}$ described in (\ref{diag:2-cell-map-of-P-internal}).
\end{lem}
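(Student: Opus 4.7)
The plan is to trace the natural transformation (\ref{diag:2-cell-map-of-P-external}) through its three constituent pieces, compute the action of each on $x:C \to X$ by invoking the explicit formulae of lemmas(\ref{lem:left-BC}) and~(\ref{lem:right-BC}), and match the resulting composite with the morphism $f_{4,x}$ built in~(\ref{diag:2-cell-map-of-P-internal}).

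First I would rewrite the natural transformation as the pasting
\[ \Sigma_{p_3}\Pi_{p_2}\Delta_{p_1} \,\iso\, \Sigma_{p_3}\Pi_{p_2}\Delta_{f_0}\Delta_{q_1} \,\iso\, \Sigma_{p_3}\Delta_{f_1}\Pi_{q_2}\Delta_{q_1} \,\Rightarrow\, \Sigma_{q_3}\Pi_{q_2}\Delta_{q_1}, \]
where the first iso encodes $p_1f_0=q_1$, the second is the right Beck-Chevalley isomorphism for the pullback square with corners $A,B,B',A'$ (using that $p_2$ and $q_2$ are exponentiable), and the third is the mate, via $\Sigma_{f_1} \ladj \Delta_{f_1}$, of the identity $\Sigma_{p_3}=\Sigma_{q_3}\Sigma_{f_1}$ coming from $q_3f_1=p_3$.

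Next I would evaluate each factor at $x$. The first coherence iso at $x$ is precisely the comparison $f_{2,x}:C_2 \to C_2'$ induced by the back pullback square $(C_2,A,A',C_2')$ of~(\ref{diag:2-cell-map-of-P-internal}). For the middle Beck-Chevalley iso I would invoke lemma(\ref{lem:right-BC}), identifying the ambient square of~(\ref{eq:right-BC}) with our square $(A,B,B',A')$ and its ``$B_2 \to B$'' with $\Delta_{q_1}(x):C_2' \to A'$; the construction of~(\ref{eq:right-BC}) then reproduces, step for step, the distributivity-pullback halves of~(\ref{diag:2-cell-map-of-P-internal}) for $p$ and $q$, and its $\beta_x$ is $f_{3,x}:C_3 \to C_3'$. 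Finally, the counit-mate at $\Pi_{q_2}\Delta_{q_1}(x)=q(x):C_4' \to B'$ is the projection $\Delta_{f_1}(C_4' \to B') \to (C_4' \to B')$; by lemma(\ref{lem:distpb-cube}) applied to the $q$-half of~(\ref{diag:2-cell-map-of-P-internal}) the pullback of $C_4' \to B'$ along $f_1$ is in fact $C_4$ and the projection is $f_{4,x}$.

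Composing the three contributions inside $\ca E/Y$ and applying the uniqueness clauses in the universal properties of the pullback $(C_2,A,A',C_2')$ and of the distributivity pullbacks producing $C_3,C_4,C_3',C_4'$, the composite collapses to $f_{4,x}$, as claimed. The main obstacle is the careful orientation of lemma(\ref{lem:right-BC}) to our setting: one has to verify that the distributivity pullbacks, pullback projections, and compatibilities with $f_0,f_1$ appearing in~(\ref{eq:right-BC}) match the $p$- and $q$-halves of~(\ref{diag:2-cell-map-of-P-internal}) in precisely the right orientation, so that the induced map $\beta_x$ of~(\ref{eq:right-BC}) really is $f_{3,x}$ and no direction error creeps in.
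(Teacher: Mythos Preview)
Your overall strategy---decompose the pasting into three pieces, invoke lemmas~(\ref{lem:left-BC}) and~(\ref{lem:right-BC}) for the Beck--Chevalley cells, then collapse via uniqueness---is exactly what the paper does. But two of your intermediate identifications are type-incorrect, and this is a genuine gap.

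The first coherence isomorphism at $x$ is a map in $\ca E/A$ between $\Delta_{p_1}(x)$ and $\Delta_{f_0}\Delta_{q_1}(x)$; both are objects over $A$ canonically isomorphic to $C_2 \to A$. It is \emph{not} $f_{2,x}:C_2 \to C_2'$, which lands in an object over $A'$. Likewise, the right Beck--Chevalley cell of lemma~(\ref{lem:right-BC}) at $C_2' \to A'$ is a map in $\ca E/B$ between $\Delta_{f_1}\Pi_{q_2}(C_2'\to A')$ and $\Pi_{p_2}\Delta_{f_0}(C_2'\to A')$; both are objects over $B$ isomorphic to $C_4 \to B$. It cannot be $f_{3,x}:C_3 \to C_3'$, whose codomain lives over $B'$. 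In the notation of~(\ref{eq:right-BC}), your $\beta_x$ is the map their $C_2 \to C_3$, which translates to an isomorphism between two pullback representatives of $C_4$, not anything at the $C_3$ level. Your third identification, that the counit piece at $C_4' \to B'$ is the projection $f_{4,x}$ from the pullback $(C_4,B,B',C_4')$, is correct---but its domain is $\Delta_{f_1}(C_4' \to B')$, not literally $C_4$, so the preceding isos must be tracked.

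The paper resolves this by \emph{not} attempting to identify the pieces with the $f_{i,x}$ directly. Instead it builds auxiliary objects $D_1,\ldots,D_5$ exactly as in~(\ref{eq:right-BC}), writes the composite explicitly as $C_4 \to D_3 \xrightarrow{\beta^{-1}} D_4 \xrightarrow{\alpha} C_4'$, and only then shows this composite equals $f_{4,x}$ by checking it fits into the same commuting squares with the bottom distributivity pullback that $f_{4,x}$ does (together with a companion $\zeta$ playing the role of $f_{3,x}$), invoking the uniqueness clause of that dpb. Your final paragraph gestures at this uniqueness step, but because your composite is assembled from mislabelled pieces, there is nothing well-formed to collapse. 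The fix is to keep the auxiliary objects distinct and run the uniqueness argument on the actual composite.
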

\begin{proof}
The proof consists of unpacking the definition of $(\PFun{\ca E})_{X,Y}(f)_x$ with reference to the diagram (\ref{diag:2-cell-map-of-P-internal}), keeping track of the canonical isomorphisms that participate in the definition. All of this may be witnessed in
\[ \xygraph{!{0;(2,0):(0,.5)::} {X}="X" ([ur] {A}="A" [r] {B}="B" [dr] {Y}="Y", [dr] {A'}="Ap" [r] {B'}="Bp") "X" [l] {C}="C" ([u(2)r] {C_2}="C2" [ur] {C_3}="C3" [r] {C_4}="C4",[d(2)r] {C'_2}="Cp2" [dr] {C'_3}="Cp3" [r] {C'_4}="Cp4")
"X":@{<-}"A"^-{}|(.61)*{\hole}:"B"^-{}|(.63)*{\hole}:"Y"^-{} "X":@{<-}"Ap"_-{}|(.61)*{\hole}:"Bp"_-{}|(.63)*{\hole}:"Y"_-{} "C":@{<-}"C2":@{<-}"C3":"C4"^-{}:@/^{1.5pc}/"Y"^-{} "C":@{<-}"Cp2":@{<-}"Cp3":"Cp4"_-{}:@/_{1.5pc}/"Y"_-{} "C":"X"|(.58)*{\hole} "C2":"A"|(.67)*{\hole} "Cp2":"Ap"|(.67)*{\hole} "C4":"B" "Cp4":"Bp" "A":"Ap"^-{} "B":"Bp"^-{} "C2":@/_{2.5pc}/"Cp2"|(.35){} "C3":@/_{2.5pc}/"Cp3"|-{f_{3,x}} "C4":@/_{2.5pc}/"Cp4"|-{f_{4,x}}
"X" [r(.25)u(1.3)] {D_1}="D1" "D1"(:@{.>}"Cp2",:@{.>}"A",:@{.>}"C"
"A" [u(1.3)r(.1)] {D_2}="D2" [r(.5)] {D_3}="D3" "D1":@{<.}"D2":@{.>}"D3":@{.>}"B" "C2":@{.>}"D1"_-{\phi} "C3":@{.>}"D2" "C4":@{.>}"D3"
"B" [r(.45)u(.9)] {D_4}="D4" "D4"(:@{.>}"B",:@{.>}"Cp4"^(.4){\alpha})
"A" [u(.65)r(.3)] {D_5}="D5" "D5"(:@{.>}"D1",:@{.>}"Cp3",:@{.>}"D4") "D5":@{.>}"D2"_-{\gamma} "D4":@{.>}"D3"_(.4){\beta}} \]
in which the solid arrows appeared already in (\ref{diag:2-cell-map-of-P-internal}), and the dotted arrows are constructed as follows. Form $D_1$ by pulling back $f_0$ and $C_2' \to A'$, then $D_1 \to C$ is the composite for the triangle $(D_1,C_2',C)$. The form $D_2$, $D_3$ and $D_4$ by taking the distributivity pullback of $D_1 \to A$ along $p_2$. Form $D_4$ by pulling back $f_1$ and $C_4' \to B'$. The construction of the rest of the data proceeds in the same way as for (\ref{eq:right-BC}) as shown in
\[ \xygraph{!{0;(2,0):(0,.4)::} {A}="a" [r] {A'}="b" [d] {B'}="d" [l] {B}="c" [l] {D_3}="c3" [u] {D_2}="a3" [u(.5)r(.5)] {D_1}="a2" [r(2)] {C_2'}="b2" [u(.5)r(.5)] {C_3'}="b3" [d(3)] {C_4'}="d2" [l(3.5)] {D_4}="c2" [u(3)] {D_5}="a4"
"a":"b"_-{}:"d"_-{}:@{<-}"c"_-{}:@{<-}"a"_-{} "a":@{<-}"a2":@{<-}"a3":"c3":"c":@{<-}"c2":@{<-}"a4"(:"a2":"b2",:"b3"(:"b2":"b"^-{},:"d2"(:"d",:@{<-}"c2"))) "a4":@{.>}"a3"|-{\gamma} "c2":@{.>}"c3"|-{\beta} "a":@{}"d"|-{}="centre" "centre" ([u(.75)] {\scriptstyle{pb}} [u(.5)] {\scriptstyle{pb}}, [d] {\scriptstyle{pb}}, [l] {\scriptstyle{dpb}}, [r] {\scriptstyle{dpb}})} \]
Thus the arrow labelled as $\beta$ is by Lemma \ref{eq:right-BC} the right Beck-Chevalley isomorphism, and $\gamma$ is also invertible. Clearly $\phi$ is an isomorphism witnessing the pseudo-functoriality of $\Delta_{(-)}$, and considering (\ref{eq:left-BC}) for the square
\[ \xygraph{{B}="tl" [r] {Y}="tr" [d] {Y}="br" [l] {B'}="bl" "tl":"tr"^-{p_3}:"br"^-{1_Y}:@{<-}"bl"^-{q_3}:@{<-}"tl"^-{f_0}} \]
the arrow labelled $\alpha$ is evidently the appropriate component of the left Beck-Chevalley cell by Lemma \ref{lem:left-BC}. Thus $(\PFun{\ca E})_{X,Y}(f)_x$ is by definition the composite
\[ \xygraph{{C_4}="p1" [r] {D_3}="p2" [r] {D_4}="p3" [r] {C_4'}="p4" "p1":"p2"^-{}:"p3"^-{\beta^{-1}}:"p4"^-{\alpha}} \]
and to finish the proof we must show that this composite is $f_{4,x}$. Provisionally let us denote by $\xi$ this composite, and by $\zeta$ the composite
\[ \xygraph{{C_3}="p1" [r] {D_2}="p2" [r] {D_5}="p3" [r] {C_3'.}="p4" "p1":"p2"^-{}:"p3"^-{\gamma^{-1}}:"p4"^-{}} \]
Observe that the squares
\[ \xygraph{!{0;(3,0):} {\xybox{\xygraph{{C_2}="tl" [r] {C_3}="tr" [d] {C_3'}="br" [l] {C_2'}="bl" "tl":@{<-}"tr"^-{}:@{.>}"br"^-{\zeta}:"bl"^-{}:@{<-}"tl"^-{}}}} [r]
{\xybox{\xygraph{{C_3}="tl" [r] {C_4}="tr" [d] {C_4'}="br" [l] {C_3'}="bl" "tl":"tr"^-{}:@{.>}"br"^-{\xi}:@{<-}"bl"^-{}:@{<.}"tl"^-{\zeta}}}} [r]
{\xybox{\xygraph{{C_4}="tl" [r] {B}="tr" [d] {B'}="br" [l] {C_4'}="bl" "tl":"tr"^-{}:"br"^-{f_1}:@{<-}"bl"^-{}:@{<.}"tl"^-{\xi}}}}} \]
are commutative, and so by the uniqueness aspect of the universal property of the bottom distributivity pullback, it follows that $\zeta=f_{3,x}$ and $\xi=f_{4,x}$.
\end{proof}
The importance of this alternative description is that it can, in various ways, be written in terms of composition in the bicategory $\Polyc{\ca E}$ whose composition and coherence we understand. These ways are described in the following result, which follows immediately from Lemma \ref{lem:naturality-Poly-hom-projections} and Lemma \ref{lem:explicit-polymap->cart-transformation}.
\begin{lem}\label{lem:all-descriptions-2cell-map-P}
Let $p$ and $q:X \to Y$ be polynomials in $\ca E$ and $f:p \to q$ be a cartesian morphism between them. Then given $x:C \to X$ in $\ca E/X$, one has
\[ \PFun{\ca E}(f)_x = f_{4,x} = r_{Z,Y}(p \comp \lft{x} \comp \rgt{g}) \]
for all $Z$ and $g:C \to Z$.
\end{lem}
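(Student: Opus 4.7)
The proof plan combines the two cited lemmas. First, Lemma \ref{lem:explicit-polymap->cart-transformation} gives $\PFun{\ca E}(f)_x = f_{4,x}$ directly, so nothing new is required for the first equality. The content of the second equality is to identify $f_{4,x}$ with the image under the left projection $l_{Z,Y}$ of the whiskered $2$-cell between the polynomials $p \comp \lft{x} \comp \rgt{g}$ and $q \comp \lft{x} \comp \rgt{g}$ in $\Polyc{\ca E}$.

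To carry this out I would unpack the horizontal composite $p \comp \lft{x} \comp \rgt{g}$ using the simplified forms for composites of the shape $\lft{(-)} \comp p$ and $q \comp \rgt{(-)}$ recorded just before Lemma \ref{lem:naturality-Poly-hom-projections}. The underlying subdivided composite is produced by pulling $x$ back along $p_1$ and then taking the distributivity pullback of the result along $p_2$, which is precisely the construction of the objects $C_2$, $C_3$, $C_4$ appearing along the top of diagram (\ref{diag:2-cell-map-of-P-internal}); the analogous construction on $q$ yields $C'_2$, $C'_3$, $C'_4$. The cartesian $2$-cell $f$ whiskered by the identities on $\lft{x}$ and $\rgt{g}$ is a morphism in $\Polyc{\ca E}(X,Y)$ whose components are determined uniquely by the universal properties of the bottom pullback and bottom distributivity pullback in (\ref{diag:2-cell-map-of-P-internal}), together with the cube lemma (Lemma \ref{lem:distpb-cube}) to recognise the remaining faces as pullbacks. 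By the uniqueness clauses in those universal properties, the induced components must agree with the maps $f_{2,x}$, $f_{3,x}$, $f_{4,x}$ constructed in (\ref{diag:2-cell-map-of-P-internal}).

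Finally, Lemma \ref{lem:naturality-Poly-hom-projections} shows that $l_{Z,Y}$ applied to a composite of the form $(-) \comp \rgt{g}$ is $\Sigma_g$ applied to the left projection, and $l_{X,Y}(\lft{f} \comp (-)) = l_{X,Y}$; chaining these identities shows that the left projection extracts precisely the rightmost vertex of the subdivided composite (post-composed with $g$). Applied to our whiskered $2$-cell, this reads off exactly the component $f_{4,x}$, giving the desired equality. The only real work is the uniqueness-based identification of the components of the whiskered cartesian $2$-cell with $f_{2,x}$, $f_{3,x}$, $f_{4,x}$, and this is routine once one matches the universal properties used in the two constructions.
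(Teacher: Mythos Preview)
Your approach is exactly the paper's own---the paper merely states that the result ``follows immediately from lemma~\ref{lem:naturality-Poly-hom-projections} and lemma~\ref{lem:explicit-polymap->cart-transformation}'', and you have unpacked precisely that deduction, correctly identifying the whiskered cartesian morphism $f \comp 1_{\lft{x}} \comp 1_{\rgt{g}}$ with the pair $(f_{3,x},f_{4,x})$ via the terminal subdivided composite.

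One wrinkle worth flagging: you follow the statement in applying the \emph{left} projection $l_{Z,Y}$ and then write that it ``extracts precisely the rightmost vertex'', which is self-contradictory. Since $f_{4,x}$ is a morphism $p(x)\to q(x)$ in $\ca E/Y$ while $l_{Z,Y}$ lands in $\ca E/Z$, it is the \emph{right} projection $r_{Z,Y}$ that reads off $f_{4,x}$; the left projection would return $f_{3,x}$. The occurrence of $l$ in the statement (and in the parallel Lemma~\ref{lem:all-descriptions-associated-polynomial-functor}) appears to be a typo---note that the proof of Theorem~\ref{thm:polnomial-functor-homomorphism} later speaks of ``a right projection of one of $\Polyc{\ca E}$'s homs''. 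With $r$ in place of $l$, the identity $r_{X,Y}=r_{W,Y}((-)\comp\rgt{g})$ from Lemma~\ref{lem:naturality-Poly-hom-projections} strips off the $\rgt{g}$ cleanly (no stray $\Sigma_g$), and your direct unpacking of $p\comp\lft{x}$ then gives $f_{4,x}$ on the nose.
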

The fact that the one and 2-cell maps of $\PFun{\ca E}$ have, by Lemmas \ref{lem:all-descriptions-associated-polynomial-functor} and \ref{lem:all-descriptions-2cell-map-P}, been described in terms of the bicategory structure of $\Polyc{\ca E}$, is the reason why they give a homomorphism of bicategories. We expand on this further in the proof of
\begin{thm}\label{thm:polnomial-functor-homomorphism}
Let $\ca E$ be a category with pullbacks. With the object map $X \mapsto \ca E/X$, arrow map $p \mapsto \Sigma_{p_3}\Pi_{p_2}\Delta_{p_1}$, and 2-cell map depicted in (\ref{diag:2-cell-map-of-P-external}), one has a homomorphism
\[ \PFun{\ca E} : \Polyc{\ca E} \longrightarrow \CAT \]
of bicategories.
\end{thm}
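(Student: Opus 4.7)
The plan is to extract the pseudofunctor structure directly from the bicategory structure of $\Polyc{\ca E}$ established in Theorem \ref{thm:bicat-of-polynomials}, by exploiting the uniform descriptions of $\PFun{\ca E}$ on $1$- and $2$-cells given in lemmas \ref{lem:all-descriptions-associated-polynomial-functor} and \ref{lem:all-descriptions-2cell-map-P}. First I would note that each hom map $(\PFun{\ca E})_{X,Y} : \Polyc{\ca E}(X,Y) \to \CAT(\ca E/X, \ca E/Y)$ is a functor. This is immediate from the external description (\ref{diag:2-cell-map-of-P-external}): the middle Beck-Chevalley isomorphism is natural in the cartesian morphism $f$, and the two components at the extremes are obtained by mating identities along fixed adjunctions, so identities go to identities and composition is preserved.

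Next I would establish the unit coherence. Under the conventions adopted at the end of Section \ref{sec:dpb}, the identity polynomial on $X$ has all three structure maps equal to $1_X$ with the chosen $\Delta_{1_X}$ and $\Pi_{1_X}$ both equal to $1_{\ca E/X}$, so $\PFun{\ca E}(1_X) = 1_{\ca E/X}$ on the nose; the unit coherence iso may be taken to be the identity. For the composition coherence, fix a composable pair $p : X \to Y$ and $q : Y \to Z$ together with $x : C \to X$ and any $g : C \to W$. By lemma \ref{lem:all-descriptions-associated-polynomial-functor}, $p(x)$ is the right leg of the polynomial $p \comp \lft{x} \comp \rgt{g}$, and iterating,
\[ q(p(x)) = l_{W,Z}(q \comp \lft{p(x)} \comp \rgt{h}) \]
for a suitable $h$. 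On the other hand $(q \comp p)(x) = l_{W,Z}((q \comp p) \comp \lft{x} \comp \rgt{g})$, and the associator of $\Polyc{\ca E}$ together with the identities of lemma \ref{lem:naturality-Poly-hom-projections} produces a canonical isomorphism between these two expressions. Naturality in $x$ and in cartesian morphisms $p \to p'$, $q \to q'$ follows from the naturality of the associator of $\Polyc{\ca E}$ and of the projections $l$ and $r$, combined with lemma \ref{lem:all-descriptions-2cell-map-P}.

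The coherence axioms (pentagon and triangle) are then handled by the same mechanism used in Theorem \ref{thm:bicat-of-polynomials}: for any triple $(p,q,r)$ of composable polynomials, both sides of the pentagon arise by applying $l_{-,-}$ to two different brackettings of a ternary composite, which by the uniqueness part of Theorem \ref{thm:bicat-of-polynomials} are uniquely isomorphic as associated polynomials of a terminal ternary subdivided composite. Hence any diagram of such coherence cells commutes, and applying $l_{-,-}$ gives the pentagon. The triangle identities follow analogously using the fact that the nullary composite is the identity span and the explicit forms of $\lft{1} \comp p$ and $p \comp \rgt{1}$.

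The main obstacle I expect is in the third paragraph above: one must ensure that the composition coherence constructed via associativity of $\Polyc{\ca E}$ genuinely agrees, component-wise, with the external Beck-Chevalley definition of $\PFun{\ca E}$ on arrows (\ref{diag:2-cell-map-of-P-external}), and hence constitutes a natural transformation of functors $\ca E/X \to \ca E/Z$. The decisive tool here is the uniqueness clause in the universal property of the distributivity pullback together with lemma \ref{lem:right-BC}, which identifies the relevant right Beck-Chevalley isomorphism with the comparison map induced by universal property of a terminal pullback around a composable pair. The calculation is essentially a re-run of the argument of lemma \ref{lem:explicit-polymap->cart-transformation}, now applied to the cartesian comparison between the iterated composite and the terminal subdivided composite; once this identification is in hand, everything else reduces to coherence in $\Polyc{\ca E}$.
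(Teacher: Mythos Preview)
Your proposal is correct and follows essentially the same approach as the paper: define the unit coherence to be the identity using the conventions on chosen pullbacks and distributivity pullbacks, define the composition coherence componentwise as a hom-projection of an associator in $\Polyc{\ca E}$, and then reduce the pentagon and triangle axioms to coherence in $\Polyc{\ca E}$ via Theorem~\ref{thm:bicat-of-polynomials}. The ``main obstacle'' you flag in your final paragraph is exactly what lemmas~\ref{lem:explicit-polymap->cart-transformation} and~\ref{lem:all-descriptions-2cell-map-P} were designed to dissolve, so once those are in hand there is no further difficulty and your re-run of that argument is not needed.
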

\begin{proof}
It remains to exhibit the coherence isomorphisms and verify the coherence axioms. We assume a canonical choice of all pullbacks and existing distributivity pullbacks as explained at the end of Section \ref{ssec:dpb}. In particular this implies that identities in $\Polyc{\ca E}$ are strict, making $\PFun{\ca E}(1_X) = 1_{\ca E/X}$ for all $X \in \ca E$ by Lemma \ref{lem:all-descriptions-associated-polynomial-functor}. Let $p:X \to Y$ and $q:Y \to Z$ be polynomials. For $x:C \to X$ in $\ca E/X$ one has the associativity isomorphism
\[ \alpha_{q,p,\lft{x}}^{-1} : q \comp (p \comp \lft{x}) \iso (q \comp p) \comp \lft{x} \]
and so the component of the coherence isomorphism
\[ \pi_{q,p,x} : \PFun{\ca E}(q)\PFun{\ca E}(p)(x) \iso \PFun{\ca E}(q \comp p)(x) \]
is defined to be $l_{C,Z}(\alpha_{q,p,\lft{x}}^{-1})$. Naturality in $q$, $p$ and $x$ is clear by definition. Note that by Lemma \ref{lem:naturality-Poly-hom-projections} there are many other descriptions of this same component, namely
\[ \pi_{q,p,x} = r_{D,Z}(\alpha_{q,p,\lft{x}}^{-1} \comp \rgt{g}) \]
for any $D$ and $g:C \to D$. Using this and Lemmas \ref{lem:all-descriptions-associated-polynomial-functor} and  \ref{lem:all-descriptions-2cell-map-P}, one can exhibit any component of any bicategorical homomorphism coherence diagram, as the image of a diagram of coherence isomorphisms in $\Polyc{\ca E}$, by a right projection of one of $\Polyc{\ca E}$'s homs. By Theorem \ref{thm:bicat-of-polynomials} all such diagrams commute.
\end{proof}
\begin{defn}\label{defn:polyfunctor}
A \emph{polynomial functor} over $\ca E$ is a functor which is isomorphic to a composite of functors of the form $\Sigma_f$, $\Delta_g$ and $\Pi_h$, where $f$ and $g$ can be arbitrary morphisms of $\ca E$, and $h$ can be an exponentiable morphism of $\ca E$. 
\end{defn}
It follows from Theorem \ref{thm:polnomial-functor-homomorphism} that a functor between slices of $\ca E$ is polynomial if and only if it is in the essential image of $\PFun{\ca E}$.
\begin{defn}\label{defn:polymnd}
Let $X \in \ca E$. A monad $T$ on $\ca E/X$ is a \emph{polynomial monad} if it is isomorphic to a monad of the form $\PFun{\ca E}(p)$, where $p$ is a monad on $X$ in $\Polyc{\ca E}$. A morphism $\phi : S \to T$ of monads on $\ca E/X$ is a \emph{polynomial monad morphism} if $\phi$ factors as
\[ \xygraph{!{0;(1.5,0):(0,1)::} {S}="p0" [r] {\PFun{\ca E}(q)}="p1" [r(1.25)] {\PFun{\ca E}(p)}="p2" [r] {T}="p3" "p0":"p1"^-{\iota_1}:"p2"^-{\PFun{\ca E}(\phi')}:"p3"^-{\iota_2}} \]
where $\iota_1$ and $\iota_2$ are isomorphisms of monads, and $\phi' : q \to p$ is a morphism of monads on $X$ in $\Polyc{\ca E}$.
\end{defn}
We conclude this section by observing that the hom functors of $\PFun{\ca E}$ are faithful and conservative.
\begin{prop}\label{prop:faithfulness-P-E-XY}
For any category $\ca E$ with pullbacks and objects $X,Y \in \ca E$, the hom functor $(\PFun{\ca E})_{X,Y}$ is faithful and conservative.
\end{prop}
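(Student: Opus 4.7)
The plan is to prove both claims by evaluating $\PFun{\ca E}(f)$ at suitable objects of $\ca E/X$ and reading off the components $(f_0,f_1)$ from the explicit description $\PFun{\ca E}(f)_x = f_{4,x}$ provided by Lemma~\ref{lem:explicit-polymap->cart-transformation}. The crucial preliminary, which I would establish first, is the identification $\PFun{\ca E}(f)_{1_X} = f_1$: with the canonical choices of pullbacks and distributivity pullbacks fixed at the end of Section~\ref{sec:dpb}, the diagram (\ref{diag:2-cell-map-of-P-internal}) with $x = 1_X$ collapses entirely, yielding $C_2 = C_3 = A$ and $C_4 = B$ (and analogously for $q$), with all the auxiliary structural maps being identities. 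The pullback square $(C_4, B, B', C_4')$ supplied by the cube lemma then reduces to a trivial square whose commutativity alone forces $f_{4, 1_X} = f_1$.

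With this identification in hand, conservativeness is essentially immediate. If $\PFun{\ca E}(f)$ is a natural isomorphism, then $f_1 = \PFun{\ca E}(f)_{1_X}$ is invertible; but the defining cartesian-morphism pullback square $(p_2, q_2, f_1, f_0)$ exhibits $f_0 : A \to A'$ as the base change of $f_1 : B \to B'$ along $q_2 : A' \to B'$, so $f_0$ is invertible as well. Hence both components of $f$ are isomorphisms, and $f$ itself is an isomorphism in the hom-category $\Polyc{\ca E}(X,Y)$.

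For faithfulness, assume $\PFun{\ca E}(f) = \PFun{\ca E}(g)$. The evaluation at $1_X$ immediately gives $f_1 = g_1$, but in general the 0-component of a cartesian morphism is not determined by the 1-component, so I would also evaluate at $x = p_1 : A \to X$. Then $C_2 = A \times_X A$ and $C_2' = A \times_X A'$ are the pullbacks of $p_1$ and $q_1$ along $p_1$, and $f_{2, p_1}$ is the map induced on these pullbacks by $f_0$. Using that the two squares $(C_3, C_4, C_4', C_3')$ and $(C_2, C_3, C_3', C_2')$ in (\ref{diag:2-cell-map-of-P-internal}) are pullbacks, I can recover $f_{3, p_1}$ and then $f_{2, p_1}$ uniquely from the given $f_{4, p_1}$; pre-composing $f_{2, p_1}$ with the diagonal $A \to A \times_X A$ and post-composing with the projection $A \times_X A' \to A'$ then returns $f_0$. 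The main subtlety will be the two-step descent from $f_{4, p_1}$ back to $f_{2, p_1}$, which has to be justified by the uniqueness clauses of the two pullback universal properties; once that is in place, the extraction of $f_0$ by diagonal-plus-projection is formal.
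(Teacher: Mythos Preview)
Your treatment of the component at $1_X$ and of conservativeness is correct and matches the paper's argument exactly.

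The faithfulness argument, however, has a genuine gap. Your plan is to descend from $f_{4,p_1}$ to $f_{3,p_1}$ and then to $f_{2,p_1}$ using that the squares $(C_3,C_4,C_4',C_3')$ and $(C_2,C_3,C_3',C_2')$ are pullbacks. But knowing that a commuting square is a pullback, together with three of its four sides, does \emph{not} determine the fourth: if $C_4'=1$, for instance, then $C_3 \cong C_4 \times C_3'$, and any automorphism of $C_3$ over $C_4$ produces a different map $C_3 \to C_3'$ that still makes the square a pullback with the same $C_3 \to C_4$ and the same $f_{4,x}$. The ``uniqueness clauses of the two pullback universal properties'' that you invoke simply do not supply the required uniqueness here; the projections of a pullback are jointly monic, but you only know one of the two projections. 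So the descent step fails as stated, and the choice $x=p_1$ does not obviously give you access to $f_0$ from the data $\PFun{\ca E}(f)$.

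The paper circumvents this by evaluating instead at $x = q_1$, so that $C = A'$ and $C_2' = A' \times_X A'$ carries the diagonal section $s_1:A' \to C_2'$. Lemma~\ref{lem:distpb-sections} applied to the \emph{bottom} distributivity pullback then produces sections $s_2:A' \to C_3'$ and $s_3:B' \to C_4'$ depending only on $q$. Now the pullback $(C_4,B,B',C_4')$ together with $s_3$ and the already-known $f_1$ yields a section $s_4:B \to C_4$ which \emph{does} depend on $\PFun{\ca E}(f)$ (through $f_{4,q_1}$); a second application of Lemma~\ref{lem:distpb-sections}, this time to the top distributivity pullback starting from $s_4$, gives a section $s_6:A \to C_2$, and $f_0$ is recovered as the composite $A \xrightarrow{s_6} C_2 \to C = A'$. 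The point is that the sections lemma transports information \emph{across} a distributivity pullback in a direction that the bare pullback property cannot, and the choice $x=q_1$ rather than $x=p_1$ is what makes the diagonal land on the side where this transport can be initiated.
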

\begin{proof}
Considering the instance of (\ref{diag:2-cell-map-of-P-internal}) in which $x=1_X$, it is clear that $f_{4,1_X} = f_1$, and so by Lemma \ref{lem:explicit-polymap->cart-transformation} $(\PFun{\ca E})_{X,Y}(f)$ uniquely determines $f_1$. Let us now consider the case $x=q_1$. In that case $C=A'$ and the morphisms $C'_2 \to C$ and $C'_2 \to A'$, namely the projections of the pullback defining $C'_2$, have a common section $s_1:A' \to C'_2$. Applying Lemma \ref{lem:distpb-sections} to the bottom distributivity pullback, one obtains the sections $s_2:A' \to C_3'$ and $s_3:B' \to C_4'$ satisfying the naturality conditions of that lemma. Since the square $(C_4,B,B',C_4')$ is a pullback, one induces unique $s_4:B \to C_4$ which is a section of the given map $C_4 \to B$ and satisfies $f_{4,x}s_4=s_3f_1$. Applying Lemma \ref{lem:distpb-sections}, this time to the top distributivity pullback, one induces the natural sections $s_5:A \to C_3$ and $s_6:A \to C_2$. From the naturality conditions of the sections so constructed and the commutativities in the original general diagram (\ref{diag:2-cell-map-of-P-internal}), it follows easily that $f_0$ is equal to the composite
\[ \xygraph{{A}="l" [r] {C_2}="m" [r(1.25)] {C=A',}="r" "l":"m"^-{s_6}:"r"^-{}} \]
which by construction and Lemma \ref{lem:explicit-polymap->cart-transformation}, is determined uniquely by $(\PFun{\ca E})_{X,Y}(f)$, and so $(\PFun{\ca E})_{X,Y}$ is faithful. If $(\PFun{\ca E})_{X,Y}(f)$ is invertible, then $f_1$, which we saw is the component at $1_X$ of this natural transformation, must also be invertible. Since $f_0$ is a pullback along $q_2$ of $f_1$, $f_0$ is also invertible, and so $(\PFun{\ca E})_{X,Y}$ is conservative.
\end{proof}
Example 2.10 of \cite{GambinoKock-PolynomialFunctors} shows that $(\PFun{\ca E})_{X,Y}$ is not full in general, and this is discussed further in Remark \ref{rem:GamKock}. 

\subsection{Enrichment over $\Cart$.}
\label{ssec:enriched-bicat-poly}
Recall from \cite{Bourke-CatPB} that the category $\Cart$ of categories with pullbacks and pullback preserving functors is cartesian closed. The product in $\Cart$ is as in $\CAT$, and the internal hom $[X,Y]$ is the category of pullback preserving functors $X \to Y$ and cartesian transformations between them. A \emph{$\Cart$-bicategory} is a bicategory $\ca B$ whose homs have pullbacks and whose compositions
\[ \tn{comp}_{X,Y,Z} : \ca B(Y,Z) \times \ca B(X,Y) \to \ca B(X,Z) \]
preserve them. The basic example is $\Cart$ itself. A \emph{homomorphism} $F:\ca B \to \ca C$ of $\Cart$-bicategories is a homomorphism of their underlying bicategories whose hom functors preserve pullbacks. The point of this section is to show that for any category $\ca E$ with pullbacks, the homomorphism $\PFun{\ca E}$ is in fact a homomorphism of $\Cart$-bicategories.

For all $f:A \to B$ in a category $\ca E$ with pullbacks, it is easy to witness directly that the adjunction $\Sigma_f \ladj \Delta_f$ lives in $\Cart$. So polynomial functors preserve pullbacks, and the diagram (\ref{diag:2-cell-map-of-P-external}) may be regarded as living in $\Cart$. In other words, $\PFun {\ca E}$ sends 2-cells in $\Polyc {\ca E}$ to cartesian transformations. Thus the hom maps of $\PFun{\ca E}$ may be regarded as landing in the homs of $\Cart$, that is, one can write
\[ (\PFun{\ca E})_{X,Y} : \Polyc{\ca E}(X,Y) \to \Cart(\ca E/X,\ca E/Y). \]
In fact these functors themselves live in $\Cart$. To see this we first we note that
\begin{lem}\label{lem:Poly-E-XY-pullbacks}
For any category $\ca E$ with pullbacks and objects $X,Y \in \ca E$, the category $\Polyc{\ca E}(X,Y)$ has pullbacks, and a commutative square in $\Polyc{\ca E}(X,Y)$ is a pullback if and only if its $1$-component is a pullback in $\ca E$.
\end{lem}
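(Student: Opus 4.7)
The plan is to show that every cospan in $\Polyc{\ca E}(X,Y)$ admits a pullback, constructed pointwise from the pullback of the 1-components in $\ca E$, and then to deduce the iff statement from the uniqueness of pullbacks. The underlying reason this works is that for any polynomial $r$, the evident forgetful functor $\Polyc{\ca E}(X,Y)/r \to \ca E/B_r$ sending a cartesian morphism $f:p \to r$ to its 1-component $f_1 : B \to B_r$ is an equivalence of categories: essential surjectivity holds because any $b:B\to B_r$ can be extended to a polynomial via pullback of $r_2$ along $b$ (using that exponentiable maps are pullback stable), and full faithfulness follows from the pullback universal property applied to the middle square of a cartesian morphism. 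Since pullbacks in a slice agree with pullbacks in the ambient category, and equivalences preserve/reflect limits, pullbacks in $\Polyc{\ca E}(X,Y)$ correspond exactly to pullbacks in $\ca E/B_r \iso \ca E/B_r$, hence to pullbacks of the underlying 1-components in $\ca E$.

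More concretely, given a cospan $f:p \to r \leftarrow q:g$ with $p,q,r$ the polynomials $X \leftarrow A_\bullet \to B_\bullet \to Y$, I would construct the pullback $s$ as follows. Form the pullback $B_s := B_p \times_{B_r} B_q$ in $\ca E$, with projections $\pi_p,\pi_q$. Define $A_s$ as the pullback of $p_2$ along $\pi_p$; using that the middle squares of $f$ and $g$ are pullbacks and invoking elementary pullback pasting, one sees $A_s$ is simultaneously the pullback of $q_2$ along $\pi_q$ and of $r_2$ along the composite $B_s \to B_r$. The map $s_2 : A_s \to B_s$ is a pullback of $p_2$, hence exponentiable; set $s_1 := q_1 \pi = p_1 \pi' : A_s \to X$ and $s_3 := r_3(f_1\pi_p) = r_3(g_1\pi_q) : B_s \to Y$ using the canonical projections, which agree by the construction. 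The pullback squares defining $A_s$ give cartesian morphisms $s \to p$ and $s \to q$ whose 1-components are $\pi_p$ and $\pi_q$.

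For the universal property, given a polynomial $t$ together with cartesian morphisms $h:t \to p$, $k:t \to q$ satisfying $fh=gk$, the 1-components $h_1,k_1$ equalise into $B_r$, so induce a unique $u_1 : B_t \to B_s$ in $\ca E$; the 0-component $u_0 : A_t \to A_s$ is then determined as the induced map into the pullback defining $A_s$, using that the middle square of $h$ is a pullback in $\ca E$. Uniqueness of $(u_0,u_1)$ follows from uniqueness in the defining pullback of $B_s$ and of $A_s$. This proves the existence of pullbacks, and identifies the 1-component of the constructed pullback square with the pullback in $\ca E$. For the iff statement in both directions: any pullback square in $\Polyc{\ca E}(X,Y)$ is canonically isomorphic, via the unique mediating cartesian morphism, to the one just constructed, so its 1-component is isomorphic to $B_p \times_{B_r} B_q$ in $\ca E$ and is therefore itself a pullback; conversely, if a commutative square has 1-component a pullback in $\ca E$, then the induced cartesian morphism from its apex to $s$ has invertible 1-component (and hence invertible 0-component, being a pullback of the former), so the square is isomorphic to the pullback square and is itself a pullback.

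The main obstacle is simply keeping the bookkeeping straight: checking that the three possible descriptions of $A_s$ (as a pullback over $B_p$, over $B_q$, and over $B_r$) coincide, that the induced cartesian morphisms have middle squares which are genuine pullbacks (this is a pullback-pasting calculation, not automatic), and that the verification of compatibility with the legs $X \leftarrow \cdot$ and $\cdot \to Y$ is consistent across the three incarnations. No novel ingredient beyond pullback pasting and pullback stability of exponentiability is needed.
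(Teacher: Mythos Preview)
Your proof is correct. Both the slice-equivalence argument and the explicit construction work, and your bookkeeping is sound.

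The paper takes a slightly different route: it embeds $\Polyc{\ca E}(X,Y)$ into the functor category $\ca E^{\leftarrow\to\to}$ and invokes a general fact about pullbacks in such functor categories---namely, that if the naturality squares of the two legs of a cospan at a given arrow $\alpha$ are pullbacks, then so are the naturality squares of the pullback projections at $\alpha$. This immediately shows that pullbacks are computed componentwise and that the ``middle square is a pullback'' and exponentiability conditions are preserved. The paper then observes that the $0$-component square is automatically a pullback whenever the $1$-component square is, by pullback pasting.

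Your approach via the equivalence $\Polyc{\ca E}(X,Y)/r \simeq \ca E/B_r$ is a cleaner conceptual packaging of the same phenomenon: it makes explicit that the $1$-component completely controls the cartesian morphisms into a fixed $r$, so that pullbacks over $r$ reduce to pullbacks in $\ca E/B_r$. This equivalence is not stated in the paper but is a useful observation in its own right. The paper's functor-category lemma is more reusable in other contexts; your slice equivalence gives more direct structural insight into the homs of $\Polyc{\ca E}$.
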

\begin{proof}
One has a canonical inclusion 
\[ \Polyc{\ca E}(X,Y) \longrightarrow \ca E^{\leftarrow\to\to} \]
of $\Polyc{\ca E}(X,Y)$ into the functor category. In general, given a category $\C$, an arrow $\alpha$ in $\C$, and a square
\[ \xygraph{{P}="tl" [r] {B}="tr" [d] {C}="br" [l] {A}="bl" "tl":"tr"^-{q}:"br"^-{g}:@{<-}"bl"^-{f}:@{<-}"tl"^-{p}} \]
in $[\C,\ca E]$, then if the naturality squares of $f$ and $g$ at $\alpha$ are pullbacks, then so are those for $p$ and $q$, by the elementary properties of pullback squares. Since exponentiable maps are pullback stable, and one may choose pullbacks in $\ca E$ so that identity arrows are pullback stable, it follows that pullbacks in $\Polyc{\ca E}(X,Y)$ exist and are formed as in $\ca E^{\leftarrow\to\to}$. Thus it follows in particular that a commutative square in $\Polyc{\ca E}(X,Y)$ as in the statement is a pullback if and only if its $0$ and $1$-components are pullbacks in $\ca E$. But from the elementary properties of pullbacks, if the $1$-component square is a pullback then so is the $0$-component.
\end{proof}
\noindent and so one has
\begin{prop}\label{prop:P-E-XY-pres-ref-pbs}
For any category $\ca E$ with pullbacks and objects $X,Y \in \ca E$, the functor $(\PFun{\ca E})_{X,Y}$ preserves and reflects pullbacks.
\end{prop}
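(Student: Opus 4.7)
The plan is to combine the pointwise computation of pullbacks in $\Cart(\ca E/X,\ca E/Y)$ with the explicit description of the $2$-cell map of $\PFun{\ca E}$ from diagram (\ref{diag:2-cell-map-of-P-internal}). First I would observe that a square of cartesian natural transformations between pullback-preserving functors is a pullback in $\Cart(\ca E/X,\ca E/Y)$ iff each of its components is a pullback in $\ca E/Y$, equivalently in $\ca E$. By lemma(\ref{lem:explicit-polymap->cart-transformation}), the component at $x:C \to X$ of $(\PFun{\ca E})_{X,Y}(f)$ is the morphism $f_{4,x}$ from (\ref{diag:2-cell-map-of-P-internal}). Thus the task reduces to relating pullbacks of $1$-components in $\Polyc{\ca E}(X,Y)$ to pullbacks of the corresponding $f_{4,x}$'s in $\ca E$ for every $x$.

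For reflection, I would evaluate at $x=1_X$: with the conventions fixed at the end of section(\ref{sec:dpb}) the construction (\ref{diag:2-cell-map-of-P-internal}) collapses and $f_{4,1_X}=f_1$, as already observed in the proof of proposition(\ref{prop:faithfulness-P-E-XY}). Hence a square sent to a pullback by $(\PFun{\ca E})_{X,Y}$ has, in particular, its components at $1_X$ forming a pullback, which says that the $1$-components of the original square form a pullback in $\ca E$. Lemma(\ref{lem:Poly-E-XY-pullbacks}) then implies the original square is a pullback in $\Polyc{\ca E}(X,Y)$.

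For preservation, I would start from a pullback square of polynomials, so that by lemma(\ref{lem:Poly-E-XY-pullbacks}) its square of $1$-components is a pullback in $\ca E$. Fixing $x$, I would form the four objects $C_4^i$ ($i$ ranging over the four corners) by the construction of (\ref{diag:2-cell-map-of-P-internal}) applied to each corner polynomial, together with their canonical maps to the $1$-components $B^i$. For each of the four cartesian morphisms $f$ in the square, the cube lemma (lemma(\ref{lem:distpb-cube})), invoked exactly as in the paragraph following (\ref{diag:2-cell-map-of-P-internal}), gives that the associated square $(C_4,B,B',C_4')$ is a pullback. These four pullback squares are the vertical faces of a commutative cube in $\ca E$ whose bottom face is the pullback of $1$-components, and I would then apply elementary pullback pasting (a corner of the top face is built from two adjacent vertical pullbacks and the bottom pullback) to conclude that the top face, the square of $C_4^i$'s, is also a pullback. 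Since this holds at every $x$, the image square is a pullback pointwise, hence in $\Cart(\ca E/X,\ca E/Y)$.

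The only genuinely delicate step is recognising that the four distributivity pullback constructions at the corners of the square assemble into the required cube of pullbacks: this is precisely what the cube lemma guarantees for each cartesian morphism, so once that identification is in hand the argument reduces to routine pullback pasting and evaluation at $1_X$.
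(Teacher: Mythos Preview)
Your argument is correct, but it takes a noticeably longer route than the paper's proof. Two differences are worth highlighting.

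First, the paper does not prove preservation and reflection separately. It invokes proposition~\ref{prop:faithfulness-P-E-XY} to observe that $(\PFun{\ca E})_{X,Y}$ is conservative, and since a conservative functor that preserves pullbacks automatically reflects them, only preservation needs to be shown. Your direct reflection argument via evaluation at $1_X$ is perfectly fine, but it duplicates work.

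Second, and more substantially, for preservation the paper avoids your cube-lemma-and-pasting argument entirely. The key observation is that in $\Cart(\ca E/X,\ca E/Y)$ every natural transformation is cartesian, so for any $x$ the naturality square at the unique map $x \to 1_X$ exhibits the component at $x$ as a pullback of the component at $1_X$. Hence a commutative square in $\Cart(\ca E/X,\ca E/Y)$ is a pullback iff its component at $1_X$ is. Combined with $f_{4,1_X}=f_1$ and lemma~\ref{lem:Poly-E-XY-pullbacks}, preservation is immediate. Your approach instead verifies the pullback property at every $x$ by assembling the four instances of the cube lemma into a commutative cube and pasting; this is valid (the calculation $C_4^q \times_{C_4^s} C_4^r \iso B_p \times_{B_s} C_4^s \iso C_4^p$ goes through using three of the side pullbacks and the bottom), but it re-derives by hand what cartesianness of the target hom gives for free. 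The trade-off: your argument is more self-contained and does not appeal to the special structure of $\Cart$, while the paper's argument exploits exactly that structure to collapse the verification to a single component.
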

\begin{proof}
Since by Proposition \ref{prop:faithfulness-P-E-XY} $(\PFun{\ca E})_{X,Y}$ is conservative, it suffices to show that it preserves pullbacks. But by the elementary properties of pullbacks, a square in $\Cart(\ca E/X,\ca E/Y)$ is a pullback if and only if its component at $1_X$ is a pullback. Since by Lemma \ref{lem:explicit-polymap->cart-transformation} the component at $1_X$ of $(\PFun{\ca E})_{X,Y}(f)$ is just $f_1$, the result follows from Lemma \ref{lem:Poly-E-XY-pullbacks}.
\end{proof}
\begin{thm}\label{thm:pb-enrichment-poly}
Let $\ca E$ be a category with pullbacks. Then $\Polyc{\ca E}$ is a $\Cart$-bicategory and
\[ \PFun{\ca E} : \Polyc{\ca E} \longrightarrow \Cart \]
is a homomorphism of $\Cart$-bicategories.
\end{thm}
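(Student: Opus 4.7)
The plan is to reduce both assertions to their counterparts in $\Cart$ by transport along $\PFun{\ca E}$, exploiting the fact (Proposition~\ref{prop:P-E-XY-pres-ref-pbs}) that the hom functors preserve \emph{and} reflect pullbacks. Assertion (ii) is essentially automatic once (i) holds: by Theorem~\ref{thm:polnomial-functor-homomorphism} the underlying $\PFun{\ca E}$ is already a homomorphism of bicategories, and each hom functor preserves pullbacks by Proposition~\ref{prop:P-E-XY-pres-ref-pbs}, which is precisely the supplementary data of a $\Cart$-homomorphism. So the real work is assertion (i); the homs of $\Polyc{\ca E}$ admit pullbacks by Lemma~\ref{lem:Poly-E-XY-pullbacks}, so what remains is that composition in $\Polyc{\ca E}$ preserves pullbacks in each variable.

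A preliminary step, which I would dispatch directly, is that $\Cart$ is itself a $\Cart$-bicategory. Pullbacks in $\Cart(X,Y)$ are computed componentwise in $Y$; the resulting functor is pullback-preserving since limits commute with limits, and the legs are cartesian by construction. Horizontal whiskering of a pullback square of cartesian transformations by a pullback-preserving functor on either side again yields a componentwise pullback of cartesian transformations: on the input side because whiskering amounts to evaluation at $F(x)$ for $x$ in the source category, and on the output side because the ambient functor preserves pointwise pullbacks. Hence the composition bifunctor of $\Cart$ preserves pullbacks in each variable.

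To propagate this to $\Polyc{\ca E}$, fix $p : Y \to Z$ in $\Polyc{\ca E}$ and consider a pullback square $\Sigma$ in $\Polyc{\ca E}(X,Y)$. By Proposition~\ref{prop:P-E-XY-pres-ref-pbs}, $(\PFun{\ca E})_{X,Y}(\Sigma)$ is a pullback in $\Cart(\ca E/X, \ca E/Y)$. The coherence datum of the homomorphism $\PFun{\ca E}$ from Theorem~\ref{thm:polnomial-functor-homomorphism} provides a natural isomorphism of functors $\Polyc{\ca E}(X,Y) \to \Cart(\ca E/X, \ca E/Z)$ between $(\PFun{\ca E})_{X,Z} \circ (p \comp (-))$ and $\PFun{\ca E}(p) \circ (\PFun{\ca E})_{X,Y}(-)$. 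Under this isomorphism the image $(\PFun{\ca E})_{X,Z}(p \comp \Sigma)$ corresponds to the whiskering of $(\PFun{\ca E})_{X,Y}(\Sigma)$ by $\PFun{\ca E}(p)$, which is a pullback in $\Cart(\ca E/X, \ca E/Z)$ by the preliminary step. Since $(\PFun{\ca E})_{X,Z}$ reflects pullbacks, $p \comp \Sigma$ is itself a pullback. The argument for $(-) \comp q$ with a fixed $q \in \Polyc{\ca E}(X,Y)$ is entirely symmetric.

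The main subtle point, and the only place where care is genuinely required, is to ensure that the coherence natural isomorphism from Theorem~\ref{thm:polnomial-functor-homomorphism} is natural in the variable being composed with, so that whole commutative diagrams (not merely individual arrows) transport through the isomorphism to the corresponding whiskered diagrams and back. This naturality is part of the definition of a homomorphism of bicategories and was already effectively used in the coherence arguments of section~\ref{sec:polynomial-functors}; once it is granted, the entire proof assembles cleanly from Lemma~\ref{lem:Poly-E-XY-pullbacks}, Proposition~\ref{prop:P-E-XY-pres-ref-pbs}, and Theorem~\ref{thm:polnomial-functor-homomorphism}, with no further appeal to the internal combinatorics of distributivity pullbacks.
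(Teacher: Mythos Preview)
Your overall strategy---transport along $\PFun{\ca E}$ using that the hom functors preserve and reflect pullbacks---is exactly the paper's. The gap is in what you actually verify. The definition of a $\Cart$-bicategory requires that the \emph{bifunctor} $\tn{comp}_{X,Y,Z}:\ca B(Y,Z)\times\ca B(X,Y)\to\ca B(X,Z)$ preserve pullbacks, i.e.\ send a pullback square in the product category (a pair of pullback squares) to a pullback. You only establish preservation in each variable separately: your preliminary step treats whiskering by a fixed functor on one side, and your transport step fixes $p$ and varies the other argument. Separate preservation does not in general imply joint preservation for a bifunctor; the grid limit $F(a_P,b_P)$ need not coincide with the diagonal pullback $F(a_1,b_1)\times_{F(a_0,b_0)}F(a_2,b_2)$ because there is no a priori way to produce the cone components at the off-diagonal corners $F(a_1,b_2)$ and $F(a_2,b_1)$.

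The paper avoids this by invoking cartesian closedness of $\Cart$: since $\Cart$ is cartesian closed, the internal composition $[Y,Z]\times[X,Y]\to[X,Z]$ is itself a morphism of $\Cart$, i.e.\ a pullback-preserving functor out of the product. That gives joint preservation for the bottom edge of the coherence square in one stroke. The transport then runs for the whole bifunctor at once: the left vertical $(\PFun{\ca E})_{Y,Z}\times(\PFun{\ca E})_{X,Y}$ preserves and reflects pullbacks (products of such functors do), the bottom preserves pullbacks, hence so does the composite; the right vertical reflects pullbacks, so the top does too. If you prefer to avoid cartesian closedness, you can patch your direct argument by observing that for cartesian transformations $\gamma:G\to G'$ and $\phi:F\to F'$ the interchange square with corners $GF$, $G'F$, $GF'$, $G'F'$ is a pullback (it is the naturality square of $\gamma$ at $\phi_x$), which supplies exactly the missing off-diagonal cone components; but this additional step needs to be made explicit.
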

\begin{proof}
By Proposition \ref{prop:P-E-XY-pres-ref-pbs} it suffices to show that the composition functors of $\Polyc{\ca E}$ preserve pullbacks. One has for each $X,Y,Z \in \ca E$, an isomorphism
\[ \xygraph{!{0;(5,0):(0,.2)::} {\Polyc{\ca E}(Y,Z) \times \Polyc{\ca E}(X,Y)}="tl" [r] {\Polyc{\ca E}(X,Z)}="tr" [d] {\Cart(\ca E/X,\ca E/Z)}="br" [l] {\Cart(\ca E/Y,\ca E/Z) \times \Cart(\ca E/X,\ca E/Y)}="bl" "tl":"tr"^-{\comp}:"br"^-{(\PFun{\ca E})_{X,Z}}:@{<-}"bl"^-{\comp}:@{<-}"tl"^-{(\PFun{\ca E})_{Y,Z} \times (\PFun{\ca E})_{X,Y}} "tl" [d(.5)r(.6)] {\iso}} \]
The vertical functors preserve and reflect pullbacks by Proposition \ref{prop:P-E-XY-pres-ref-pbs}, the bottom one preserves pullbacks since $\Cart$ is a $\Cart$-bicategory by cartesian closedness, and so the composition functor for $\Polyc{\ca E}$ preserves pullbacks as required.
\end{proof}
\begin{rem}\label{rem:GamKock}
In the case where $\ca E$ is locally cartesian closed, the work of Gambino and Kock \cite{GambinoKock-PolynomialFunctors} tells us more. In that case $\ca E$ is in particular a monoidal category via its cartesian product, and it acts as a monoidal category on its slices. Moreover polynomial functors over such $\ca E$ acquire a canonical strength. Then by Proposition 2.9 of \cite{GambinoKock-PolynomialFunctors}, the image of $\PFun{\ca E}$ consists of the slices of $\ca E$, polynomial functors over $\ca E$ and \emph{strong} cartesian transformations between them.
\end{rem}
\begin{rem}\label{rem:iterability}
Since for any category $\ca E$ with pullbacks the homs of $\Polyc{\ca E}$ also have pullbacks, the above result can be applied to any of those homs in place of $\ca E$, giving a sense in which the theory of polynomials may be iterated. Such iteration is implicit in unpublished work of Martin Hyland on fibrations, type theory and the Dialectica interpretation. Building on this, the thesis \cite{VonGlehn-Thesis} of von Glehn provides polynomial models of type theory, and shows the possibility of a more sophisticated kind of iteration.
\end{rem}
We conclude this section by describing the sense in which the hom functors of $\PFun{\ca E}$ are fibrations. First we require some preliminary definitions. Given a functor $F : \ca A \to \ca B$, a morphism $f : X \to Y$ of $\ca A$ is \emph{$F$-cartesian} when for all $g : Z \to X$ and $h : FZ \to FX$ such that $F(g)h = Ff$, there exists a unique $k : Z \to X$ such that $Fk = h$ and $fk = g$. When $F$ is a fibration in the bicategorical sense of Street \cite{Street-FibrationInBicats}, we say that it is a \emph{bi-fibration}. This property on $F$ can be formulated in elementary terms as follows: every $f : B \to FA$ factors as
\[ \xygraph{{B}="p0" [r] {FC}="p1" [r] {FB}="p2" "p0":"p1"^-{g}:"p2"^-{Fh}} \]
where $h$ is $F$-cartesian and $g$ is an isomorphism.
\begin{prop}\label{prop:homfunctors-bi-fibs}
Suppose that $\ca E$ is a category with finite limits, and let $X$ and $Y \in \ca E$. Then
\[ (\PFun{\ca E})_{X,Y} : \Polyc{\ca E}(X,Y) \longrightarrow \Cart(\ca E/X,\ca E/Y) \]
is a bi-fibration, and every morphism of $\Polyc{\ca E}(X,Y)$ is $(\PFun{\ca E})_{X,Y}$-cartesian.
\end{prop}
\begin{proof}
We begin by verifying that $(\phi_0,\phi_1) : (p_1,p_2,p_3) \to (q_1,q_2,q_3)$ as in
\[ \xygraph{!{0;(1.5,0):(0,.5)::} {X}="p0" [ur] {E_1}="p1" [r] {B_1}="p2" [dr] {Y}="p3" [dl] {B_2}="p4" [l] {E_2}="p5" "p0":@{<-}"p1"^-{p_1}:"p2"^-{p_2}:"p3"^-{p_3}:@{<-}"p4"^-{q_3}:@{<-}"p5"^-{q_2}:"p0"^-{q_1} "p1":"p5"_-{\phi_0} "p2":"p4"_-{\phi_1} "p1":@{}"p4"|-{\tn{pb}}} \]
is $(\PFun{\ca E})_{X,Y}$-cartesian. Denoting by $1$ the terminal object of $\ca E$, note that $\PFun(\phi_0,\phi_1) = \phi_1$. Given $(\psi_0,\psi_1) : (r_1,r_2,r_3) \to (q_1,q_2,q_3)$, and a cartesian natural transformation $\alpha : \PFun{\ca E}(r_1,r_2,r_3) \to \PFun{\ca E}(q_1,q_2,q_3)$ such that $\PFun{\ca E}(\phi_0,\phi_1)\alpha = \PFun{\ca E}(\psi_0,\psi_1)$, we must exhibit $(\beta_0,\beta_1) : (r_1,r_2,r_3) \to (p_1,p_2,p_3)$ unique such that $\PFun{\ca E}(\beta_0,\beta_1) = \alpha$ and $(\phi_0,\phi_1)(\beta_0,\beta_1) = (\psi_0,\psi_1)$ in $\Polyc{\ca E}$. But the first of these equations forces $\beta_1 = \alpha_1$, and the second equation forces $\beta_0$ to be induced as in
\[ \xygraph{!{0;(1.5,0):(0,.5)::}
{X}="p0" [ur] {E_1}="p1" [r] {B_1}="p2" [dr] {Y}="p3" [dl] {B_2}="p4" [l] {E_2}="p5" "p0":@{<-}"p1"^-{}:"p2"|(.45)*=<4pt>{}:"p3"|(.4)*=<4pt>{}:@{<-}"p4"^-{q_3}:@{<-}"p5"^-{q_2}:"p0"^-{q_1} "p1":"p5"_-{\phi_0} "p2":"p4"_-{\phi_1} "p1":@{}"p4"|-{\tn{pb}}
"p1" [r(.5)u(1.5)] {E_3}="q1" [r] {B_3}="q2" "p0":@/^{1pc}/@{<-}"q1"^-{r_3}:"q2"^-{r_2}:"p3"^-{r_3}
"q1"(:@{.>}"p1"|-{\beta_0},:@/^{.75pc}/"p5"^(.525){\psi_0}) "q2"(:"p2"_-{\alpha_1},:@/^{.75pc}/"p4"^(.65){\psi_1})} \]
The statement that $(\PFun{\ca E})_{X,Y}$ is a bi-fibration is a reformulation of the fact that if $P : \ca E/X \to \ca E/Y$ is a polynomial functor and $\phi : Q \to P$ is a cartesian transformation, then $Q$ is also polynomial. This appeared as Lemma 2.10 of \cite{GambinoKock-PolynomialFunctors}, and the proof given there works at the present generality.
\end{proof}
\begin{cor}\label{cor:inherit-polymnd-along-cart-monad-morphism}
Let $\ca E$ be a category with finite limits, $X \in \ca E$, $P$ and $Q$ be monads on $\ca E/X$, and $\phi : Q \to P$ be a monad morphism. If $P$ is a polynomial monad and $\phi$ is a cartesian monad morphism, then $Q$ is a polynomial monad and $\phi$ is a polynomial monad morphism.
\end{cor}
\begin{proof}
It suffices to show that if $p$ is a monad on $X$ in $\Polyc{\ca E}$ and $\phi : Q to \PFun{\ca E}(p)$ is a cartesian monad morphism, then $\phi$ factors as
\[ \xygraph{{Q}="p0" [r(1.2)] {\PFun{\ca E}(q)}="p1" [r(1.5)] {\PFun{\ca E}(p)}="p2" "p0":"p1"^-{\iota}:"p2"^-{\PFun{\ca E}(\psi)}} \]
where $\iota$ is an isomorphism of monads on $\ca E/X$ and $\psi : q \to p$ is a morphism of monads on $X$ in $\Polyc{\ca E}$. At the level of endomorphisms this follows from Proposition \ref{prop:homfunctors-bi-fibs}, so it suffices to exhibit a monad structure on $q$ making $\iota$ and $\psi$ into monad morphisms. The unit $\eta^q : 1 \to q$ is defined to be the unique 2-cell such that $\psi\eta = \eta^p$ and $\PFun{\ca E}(\eta^q) = \eta^{\PFun{\ca E}(q)} = \iota\eta^Q$, by the $(\PFun{\ca E})_{X,X}$-cartesianness of $\psi$. The multiplication $\mu^q : q \comp q \to q$ is defined similarly as the unique 2-cell such that $\psi\mu^q = \mu^p(\psi \comp \psi)$ and $\PFun{\ca E}(\mu^q) = \iota\mu^Q(\iota \comp \iota)^{-1}$. The monad axioms for $(q,\eta^q,\mu^q)$ are deduced from those of $p$ and the uniqueness aspects of $\psi$'s $(\PFun{\ca E})_{X,X}$-cartesianness, and with respect to this structure, $\iota$ and $\psi$ are monad morphisms essentially by definition.
\end{proof}

\section{Polynomials in 2-categories}
\label{sec:Poly-in-2-Cats}

We now develop the 2-categorical aspects of the theory of polynomials. In Section \ref{ssec:2-categorical-version} we directly generalise Section \ref{sec:Poly-in-Cats} to the setting of 2-categories. One sense in which the study of polynomial 2-functors is richer than its 1-dimensional counterpart, is that one can consider whether such 2-functors are compatible with the theory of fibrations internal to the 2-category in which the corresponding polynomial lives. We review briefly the theory of fibrations in a 2-category in Section \ref{ssec:fib-2-monads}, and in Section \ref{ssec:fam-2-fun} recall from \cite{Weber-Fam2fun} the general theory of familial 2-functors, which are those 2-functors (not necessarily polynomial) which are compatible with the 2-categorical theory of fibrations. Then in Section \ref{ssec:fam-from-poly} we give conditions on polynomials and morphisms thereof, so that the resulting polynomial 2-functors and morphisms thereof are familial. Since 2-dimensional monad theory \cite{BWellKellyPower-2DMndThy} is most usefully applied to sifted colimit preserving 2-monads as explained at the beginning of Section \ref{ssec:sifted-colim-preservation}, we give conditions on a polynomial in $\Cat$ so that ensure that its corresponding polynomial 2-functor preserves sifted colimits in Theorem \ref{thm:polynomials-sifted-colims}.

\subsection{Polynomial 2-functors.}
\label{ssec:2-categorical-version}
In this section we extend the developments of Section \ref{sec:Poly-in-Cats} to the setting of 2-categories. Let $\ca K$ be a 2-category with pullbacks. Recall that when one speaks of pullbacks, or more generally any weighted limit in a 2-category, the universal property has a 2-dimensional aspect. That is, a square $S$ in $\ca K$ is by definition a pullback in $\ca K$ if and only if for all $X \in \ca K$, the square $\ca K(X,S)$ in $\CAT$ is a pullback in $\CAT$. On objects this is the usual universal property of a pullback as in ordinary category theory, and on arrows this is the ``2-dimensional aspect''. Recall also \cite{Kelly-EnrichedCatsBook} that if $\ca K$ admits tensors with $[1]$, then the usual universal property implies this 2-dimensional aspect, but in the absence of tensors, one must verify the 2-dimensional aspect separately.

Similarly when we speak of distributivity pullbacks in $\ca K$ we will also demand that these satisfy a 2-dimensional universal property. Let $g:Z \to A$ and $f:A \to B$ be in $\ca K$. We describe first the 2-category $\tn{PB}(f,g)$ of pullbacks around $(f,g)$. The underlying category of $\tn{PB}(f,g)$ is described as in Definition \ref{def:pb-around}. Let $(s,t)$ and $(s',t'):(p,q,r) \to (p',q',r')$ be morphisms in $\tn{PB}(f,g)$. Then a 2-cell between them consists of 2-cells $\sigma:s \to s'$ and $\tau:t \to t'$ of $\ca K$, such that $p'\sigma = 1_p$, $q\sigma = \tau q'$ and $1_r = r'\sigma$. Compositions for $\tn{PB}(f,g)$ are inherited from $\ca K$. One thus defines a \emph{distributivity pullback} around $(f,g)$ in $\ca K$ to be a terminal object of the 2-category $\tn{PB}(f,g)$.

The meaning of distributivity pullbacks in this 2-categorical environment is the same as in the discussion of Section \ref{ssec:dpb}. First note that $\Sigma_f:\ca K/A \to \ca K/B$ is a 2-functor, and that by virtue of the 2-dimensional universal property of pullbacks in $\ca K$, $\Delta_f:\ca K/B \to \ca K/A$ is a 2-functor and $\Sigma_f \ladj \Delta_f$ is a 2-adjunction. To say that all distributivity pullbacks along $f$ exist in $\ca K$ is to say that $\Delta_f$ has a right $2$-adjoint, denoted $\Pi_f$ as before, and this right adjoint encodes the process of taking distributivity pullbacks along $f$. Such morphisms $f$ in $\ca K$ are said to be exponentiable, and as in the 1-dimensional case, exponentiable maps are closed under composition and are stable by pullback along arbitrary maps. Morever Lemmas \ref{lem:distpb-composition-cancellation}, \ref{lem:distpb-cube} and \ref{lem:distpb-sections} remain valid in our 2-categorical environment. The verification of this is just a matter of using the 2-dimensional aspects of pullbacks and distributivity pullbacks to induce the necessary 2-cells, in exact imitation of how one induced the arrows during these proofs in Section \ref{ssec:dpb}.

Polynomials in $\ca K$ and cartesian morphisms between them are defined as in Section \ref{ssec:bicats-of-polys}. Given polynomials $p$ and $q:X \to Y$, and cartesian morphisms $f$ and $g:p \to q$, a 2-cell $\phi:p \to q$ consists of 2-cells $\phi_0:f \to g_0$ and $\phi_1:f_1 \to g_1$ such that $p_1=q_1\phi_0$, $q_2\phi_0=\phi_1p_2$ and $q_3\phi_1=p_3$. With compositions inherited from $\ca K$ one has a 2-category $\Polyc{\ca K}(X,Y)$ together with left and right projections
\[ \xygraph{!{0;(3,0):} {\ca K/X}="l" [r] {\Polyc{\ca K}(X,Y)}="m" [r] {\ca K/Y.}="r" "l":@{<-}"m"^-{l_{X,Y}}:"r"^-{r_{X,Y}}} \]
For a composable sequence $(p_i)_i$ of polynomials as in
\[ \xygraph{{X_{i-1}}="p1" [r] {A_i}="p2" [r] {B_i}="p3" [r] {X_i}="p4" "p1":@{<-}"p2"^-{p_{i1}}:"p3"^-{p_{i2}}:"p4"^-{p_{i3}}} \]
one defines the 2-category $\tn{SdC}(p_i)_i$ of subdivided composites over $(p_i)_i$ as follows. The objects and arrows are defined as in Definitions \ref{def:subdivided-composite} and \ref{def:morphism-subdivided-composite}. Given morphisms $t$ and $t':(Y,q,r,s) \to (Y',q',r',s')$ of subdivided composites, a 2-cell $\tau:t \to t'$ consists of 2-cells $\tau_i:t_i \to t_i'$ in $\ca K$ for $0 \leq i \leq n$, such that $q_1=q_1'\tau_0$, $q_{2i}'\tau_{i−1}=\tau_iq_{2i}$, $q_3=q_3'\tau_n$, $r_i=r_i'\tau_{i−1}$ and $s_i=s_i'\tau_i$. Compositions in $\tn{SdC}(p_i)_i$ are inherited from $\ca K$. The process of taking the associated polynomial of a subdivided composite, as described in Definition \ref{def:ass-poly}, is 2-functorial. The forgetful functors $\tn{res}_n$ and $\tn{res}_0$ become 2-functors. The fact that Lemmas \ref{lem:poly-comp-subdivided-composite} and \ref{lem:subdivided-composite-comp-poly} remain valid in our 2-categorical environment, is once again a matter of using the 2-dimensional aspects of pullbacks and distributivity pullbacks to induce the necessary 2-cells in the same way that the arrows during these proofs were induced in the 1-dimensional case. Thus these 2-categories of subdivided composites admit terminal objects, and so one may define the composition of polynomials as in Definition \ref{def:composition-of-polynomials}. Moreover composition is 2-functorial.

Lemma \ref{lem:explicit-polymap->cart-transformation} gives a direct description of the arrow map of the hom functors of $\PFun{\ca E}$ as being induced by the universal properties of pullbacks and distributivity pullbacks. Thus in our 2-categorical setting, with the 2-dimensional aspects of these universal properties available, we can do the same one dimension higher and induce directly the components of the modification induced by a 2-cell between maps of polynomials. Thus we have 2-functors
\[ (\PFun{\ca K})_{X,Y} : \Polyc{\ca K}(X,Y) \to \TwoCAT(\ca K/X,\ca K/Y) \]
for all objects $X$ and $Y$ of a 2-category $\ca K$ with pullbacks. We now describe the structure that polynomials in a 2-category form.
\begin{defn}\label{def:2-bicats}
A \emph{2-bicategory} consists of a bicategory $\ca B$ whose hom categories are endowed with 2-cells making them 2-categories and the composition functors
\[ \tn{comp}_{X,Y,Z} : \ca B(Y,Z) \times \ca B(X,Y) \to \ca B(X,Z) \]
are endowed with 2-cell maps making them into 2-functors. In addition we ask that the coherence isomorphisms of $\ca B$ be natural with respect to the 3-cells. 
\end{defn}
Since a 2-bicategory $\ca B$ is a degenerate sort of tricategory, we shall call the 2-cells in its homs \emph{3-cells} of $\ca B$.
\begin{exams}\label{exs:basic-examples-2-bicats}
\begin{enumerate}
\item Given a 2-category $\ca K$ with pullbacks, the bicategory $\Span {\ca K}$ has the additional structure of a 2-bicategory in which a 3-cell $f \to g$ consists of a 2-cell $\phi$ as in
\[ \xygraph{!{0;(2,0):(0,.4)::} {X}="p0" [ur] {A}="p1" [dr] {Y}="p2" [dl] {B}="p3" "p0":@{<-}"p1"^-{s_1}:"p2"^-{t_1}:@{<-}"p3"^-{t_2}:"p0"^-{s_2}
"p1":@/_{1pc}/"p3"_-{f}|-{}="pd" "p1":@/^{1pc}/"p3"^-{g}|-{}="pc"
"pd":@{}"pc"|(.25){}="d"|(.75){}="c" "d":@{=>}"c"^-{\phi}} \]
such that $s_2\phi = \id$ and $t_2\phi = \id$.
\item Dually one has a 2-bicategory $\Cospan {\ca K}$ of cospans in any 2-category $\ca K$ with pushouts.
\item Any strict 3-category such as $\TwoCAT$ is a 2-bicategory.
\end{enumerate}
\end{exams}
\begin{defn}\label{def:hom-2-bicat}
A \emph{homomorphism} $F:\ca B \to \ca C$ of 2-bicategories is a homomorphism of their underlying bicategories whose hom functors are endowed with 2-cell maps making them into 2-functors, and whose coherence data is natural with respect to 3-cells.
\end{defn}
\begin{thm}\label{thm:2-bicat-of-polynomials}
Let $\ca K$ be a 2-category with pullbacks. One has a 2-bicategory $\Polyc{\ca K}$ whose objects are those of $\ca K$, whose hom between $X$ and $Y \in \ca K$ is $\Polyc{\ca K}(X,Y)$, and whose compositions are defined as above. Moreover with object, arrow and 2-cell maps defined as in the categorical case, and 3-cell map as defined above, one has a homomorphism
\[ \PFun{\ca K} : \Polyc{\ca K} \longrightarrow \TwoCAT \]
of 2-bicategories.
\end{thm}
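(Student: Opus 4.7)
The strategy is to lift the proofs of Theorem~\ref{thm:bicat-of-polynomials} and Theorem~\ref{thm:polnomial-functor-homomorphism} one dimension, using the 2-dimensional aspects of pullbacks and distributivity pullbacks in $\ca K$. All the object/1-cell/2-cell data for $\Polyc{\ca K}$ and the object/arrow/2-cell/3-cell data for $\PFun{\ca K}$ has already been described in the preceding discussion of this section, so what remains is to assemble the coherence data and verify the axioms, paying particular attention to 3-cell naturality.

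First I would verify that the 2-categories $\tn{SdC}(p_i)_i$ admit terminal objects. The forgetful $\tn{res}_n$ and $\tn{res}_0$ are 2-functors, and the 2-categorical analogues of lemmas~\ref{lem:poly-comp-subdivided-composite} and~\ref{lem:subdivided-composite-comp-poly} supply right 2-adjoints $p_n \cdot (-)$ and $(-) \cdot p_1$ (this was asserted in the discussion above); induction as in proposition~\ref{prop:terminal-subdivided-composites}, starting from the identity endospan as a $2$-terminal object in $\Span{\ca K}(X_0,X_0)$, then produces a $2$-terminal subdivided composite for every composable sequence. Applying the 2-functor $\tn{ass}$ defines composites of polynomials in $\ca K$, and the 2-functorial analogue of lemma~\ref{lem:for-poly-composition} holds by the same arguments together with the 2-dimensional aspect of the relevant (distributivity) pullbacks.

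Next I would define the coherence isomorphisms for $\Polyc{\ca K}$ exactly as in the categorical case: any two brackettings of a composable sequence are computed as the associated polynomial of a $2$-terminal subdivided composite, and so there is a unique invertible 1-cell between them in $\tn{SdC}(p_i)_i$, whose image under $\tn{ass}$ is the coherence isomorphism. Naturality of the resulting associators and unitors with respect to 3-cells is precisely the content of the 2-dimensional aspect of $2$-terminality: a 3-cell between parallel 1-cells into a $2$-terminal object must be the identity, which forces the coherence isomorphisms to be natural in the 3-cell direction. Composition is 2-functorial, as already noted. The coherence axioms then reduce, exactly as in the proof of theorem~\ref{thm:bicat-of-polynomials}, to the observation that any diagram of coherence isomorphisms is the $\tn{ass}$-image of a diagram among $2$-terminal subdivided composites, and therefore commutes.

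For the homomorphism $\PFun{\ca K}$, the hom 2-functors $(\PFun{\ca K})_{X,Y}$ have been defined by inducing 3-cell components through the 2-dimensional universal properties of pullbacks and distributivity pullbacks, in direct imitation of lemma~\ref{lem:explicit-polymap->cart-transformation}; their functoriality on 3-cells follows from the uniqueness aspects of those universal properties. The coherence isomorphisms $\pi_{q,p,x}$ are defined, as in theorem~\ref{thm:polnomial-functor-homomorphism}, by projecting the associators of $\Polyc{\ca K}$ along the left projection $l_{C,Z}$, and similarly for units; their naturality with respect to 3-cells is then inherited from the 3-cell naturality of the associators in $\Polyc{\ca K}$ established in the previous paragraph, together with the 2-functoriality of $l_{C,Z}$. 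The coherence axioms of a 2-bicategory homomorphism are verified by the same reduction as in theorem~\ref{thm:polnomial-functor-homomorphism}: every such diagram is the image, under a projection of one of the homs of $\Polyc{\ca K}$, of a diagram of coherence isomorphisms in $\Polyc{\ca K}$, and these commute by the already-established coherence of $\Polyc{\ca K}$.

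The main obstacle I anticipate is the bookkeeping around 3-cell naturality. The 1-dimensional proofs of lemmas~\ref{lem:poly-comp-subdivided-composite} and~\ref{lem:subdivided-composite-comp-poly}, and of lemma~\ref{lem:explicit-polymap->cart-transformation}, induce 1-cells by repeated application of pullback and distributivity-pullback universal properties; to justify the 2-categorical analogues one must carefully check that each such inducing step also accepts 2-cells as input and produces unique 2-cells as output, and then re-run the uniqueness arguments at the level of 2-cells. None of the verifications is difficult individually, but making sure that all the 2-dimensional pieces glue together coherently, and that the coherence diagrams of a 2-bicategory homomorphism really do arise as projections of diagrams of $\tn{ass}$-images of coherence isomorphisms between $2$-terminal subdivided composites, is the heart of the argument.
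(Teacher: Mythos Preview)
Your proposal is correct and follows essentially the same approach as the paper: lift theorems~\ref{thm:bicat-of-polynomials} and~\ref{thm:polnomial-functor-homomorphism} by exploiting the 2-dimensional aspects of the universal properties involved, with the extra 3-cell naturality of the coherences coming for free from 2-terminality and the fact that the coherences of $\PFun{\ca K}$ are projections of those of $\Polyc{\ca K}$. The paper's proof is terser---it simply asserts that everything has been set up to lift---whereas you spell out the inductive construction of 2-terminal subdivided composites and the mechanism by which 2-terminality forces 3-cell naturality, but the underlying argument is the same.
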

\begin{proof}
By the way that things have been set up, Lemma \ref{lem:for-poly-composition} and Theorem \ref{thm:bicat-of-polynomials} lift to our 2-categorical setting, with the extra naturality of the coherences coming from the 2-dimensional aspect of all the universal properties being used. Thus $\Polyc{\ca K}$ is a 2-bicategory. Since in the proof of Theorem \ref{thm:polnomial-functor-homomorphism} the coherences of $\PFun{\ca E}$ were obtained from associativity coherences in $\Polyc{\ca E}$, the extra naturality enjoyed by the associativities in $\Polyc{\ca K}$ gives the extra naturality required for the coherences of $\PFun{\ca K}$. Thus $\PFun{\ca K}$ is a homomorphism of 2-bicategories.
\end{proof}
The notions \emph{polynomial 2-functor}, \emph{polynomial 2-monad} and \emph{morphism of polynomial 2-monads} are defined as in Definitions \ref{defn:polyfunctor} and \ref{defn:polymnd}. Similarly one may speak of \emph{polynomial pseudo monads} when the coherences are themselves also in the essential image of $\PFun{\ca K}$.
\begin{exam}\label{ex:Fam-Ps-Mnd}
Suppose that a morphism $U:E \to B$ in a 2-category $\ca K$ with finite limits is both exponentiable and a classifying discrete opfibration in the sense of \cite{Weber-2Toposes}. One can then define a discrete opfibration in $\ca K$ to be \emph{$U$-small} when it arises by pulling back $U$. By definition $U$-small discrete opfibrations are pullback stable in $\ca K$. Often they are also closed under composition, and when this is the case, one can consider the full sub-2-bicategory $\ca S_U$ of $\Polyc{\ca K}$ consisting of those polynomials
\[ \xygraph{{I}="p1" [r] {A}="p2" [r] {C}="p3" [r] {J}="p4" "p1":@{<-}"p2"^-{s}:"p3"^-{p}:"p4"^-{t}} \]
such that $p$ is a $U$-small discrete opfibration. The condition of being a classifying discrete opfibration then implies that the polynomial
\begin{equation}\label{eq:cdo}
\xygraph{{1}="p1" [r] {E}="p2" [r] {B}="p3" [r] {1}="p4" "p1":@{<-}"p2"^-{}:"p3"^-{U}:"p4"^-{}}
\end{equation}
is a biterminal object in $\ca S_U(1,1)$. As such it admits a canonical pseudo-monad structure. When $\ca K = \Cat$ and $U$ is the forgetful functor $\Set_{\tn{f},\bullet} \to \Set_{\tn{f}}$ from the category of finite pointed sets to that of finite sets, a $U$-small discrete opfibration is one with finite fibres, and these are evidently closed under composition. By the finitary analogue of \cite{Weber-Fam2fun} Corollary 5.12, the endofunctor associated to (\ref{eq:cdo}) is the finite ``Fam'' construction, which associates to a category its finite coproduct completion. Replacing $U$ by $U^{\op}$ gives a polynomial pseudo monad on $\Cat$ whose underlying endofunctor gives the finite product completion of a category. To summarise, the finite coproduct and finite product completion pseudo monads on $\Cat$ are polynomial pseudo monads.
\end{exam}
The 2-categorical analogues of Proposition \ref{prop:homfunctors-bi-fibs} and Corollary \ref{cor:inherit-polymnd-along-cart-monad-morphism} are also valid, with proofs adapted from the 1-dimensional case in the same way as above. We record these results as follows.
\begin{prop}\label{prop:poly-along-cart-2cat-case}
Let $\ca K$ be a 2-category with finite limits and let $X$ and $Y \in \ca K$. Let $P$ be a polynomial 2-functor $\ca K/X \to \ca K/Y$, and $T$ be a polynomial 2-monad on $\ca K/X$. Let $\phi : Q \to P$ be a 2-natural transformation, and $\psi : S \to T$ be a morphism of 2-monads.
\begin{enumerate}
\item If $\phi$ is cartesian then $Q$ is a polynomial 2-functor.
\item If $\psi$ is cartesian then $S$ is a polynomial 2-monad and $\psi$ is a morphism of polynomial 2-monads.
\end{enumerate}
\end{prop}

\subsection{Fibrations 2-monads.}
\label{ssec:fib-2-monads}
The 2-monads whose algebras are fibrations and opfibrations to be recalled here, play two roles in this work: (1) as part of the background to the discussion of familial 2-functors below in Section \ref{ssec:fam-2-fun}, and (2) as examples of polynomial 2-monads in Proposition \ref{prop:fib-monads-as-poly}. Fibrations internal to a 2-category were introduced by Street in \cite{Street-FibrationIn2cats}.

Elementary descriptions of fibrations were given in Section 2 of \cite{Weber-2Toposes} in terms of cartesian 2-cells. A similar 2-categorical reformulation of split fibrations, in which cleavages were expressed 2-categorically via the notion of chosen cartesian 2-cell, was given in Section 3 of \cite{Weber-Fam2fun}. Thus for any 2-category $\ca K$, one can define fibrations and split fibrations in $\ca K$. Moreover, given fibrations $f:A \to B$ and $g:C \to D$ in $\ca K$, a morphism of fibrations $(u,v):f \to g$ is a commutative square
\[ \xygraph{{A}="p0" [r] {C}="p1" [d] {D}="p2" [l] {B}="p3" "p0":"p1"^-{u}:"p2"^-{g}:@{<-}"p3"^-{v}:@{<-}"p0"^-{f}} \]
such that post-composition with $u$ sends $f$-cartesian 2-cells to $g$-cartesian 2-cells. When $f$ and $g$ are split fibrations, $(u,v)$ is a strict morphism when post-composing with $u$ preserves chosen cartesian 2-cells.

Thus one has 2-categories $\tn{Fib}(\ca K)$ and $\tn{SFib}(\ca K)$ of fibrations, morphisms thereof and 2-cells; and of split fibrations and strict morphisms respectively, coming with forgetful 2-functors into $\ca K^{[1]}$. Dually an \emph{opfibration} in a general 2-category $\ca K$ is a fibration in the 2-category $\ca K^{\co}$ obtained by reversing 2-cells. We freely use the associated dual notions, such as (chosen) $f$-opcartesian 2-cells below, and we define
\[ \begin{array}{lccr} {\tn{OpFib}(\ca K) = \tn{Fib}(\ca K^{\co})^{\co}} &&&
{\tn{SOpFib}(\ca K) = \tn{SFib}(\ca K^{\co})^{\co}.} \end{array} \]

When $\ca K$ has comma objects, Street \cite{Street-FibrationIn2cats} observed that fibrations can be regarded as pseudo algebras for certain easy to define 2-monads. The comma squares
\[ \xygraph{{\xybox{\xygraph{!{0;(1.5,0):(0,.6667)::} {1_B \downarrow f}="p0" [r] {A}="p1" [d] {B}="p2" [l] {B}="p3" "p0":"p1"^-{q_f}:"p2"^-{f}:@{<-}"p3"^-{1_B}:@{<-}"p0"^-{\Phi_{\ca K}(f)} "p0" [d(.5)r(.4)] :@{=>}[r(.2)]^{\lambda_f}}}}
[r(4)]
{\xybox{\xygraph{!{0;(1.5,0):(0,.6667)::} {f \downarrow 1_B}="p0" [r] {B}="p1" [d] {B}="p2" [l] {A}="p3" "p0":"p1"^-{\Psi_{\ca K}(f)}:"p2"^-{1_B}:@{<-}"p3"^-{f}:@{<-}"p0"^-{p_f}
"p0" [d(.5)r(.4)] :@{=>}[r(.2)]^{\lambda'_f}}}}} \]
describe the effect on objects of the underlying endofunctors of 2-monads $\Phi_{\ca K}$ and $\Psi_{\ca K}$ on $\ca K^{[1]}$. For $f:A \to B$ the component of the unit of $\Phi_{\ca K}$ is of the form $(\eta_f,1_B)$ where $\eta_f$ is unique such that
\[ \begin{array}{lcccr} {\Phi_{\ca K}(f)\eta_{f} = f} && {q_f\eta_{f} = 1_A} && {\lambda_f\eta_f = \id} \end{array} \]
and the $f$-component of the multiplication is of the form $(\mu_f,1_B)$ where $\mu_f$ is unique such that
\[ \begin{array}{lcccr} {\Phi_{\ca K}(f)\mu_{f} = \Phi_{\ca K}^2(f)} && {q_f\mu_f = q_fq_{\Phi_{\ca K}(f)}} && {\lambda_f\mu_f = (\lambda_fq_{\Phi_{\ca K}(f)})\lambda_{\Phi_{\ca K}(f)}.} \end{array} \]
The unit and multiplication of $\Psi_{\ca K}$ are described dually.

We follow the established notation of 2-dimensional monad theory by denoting, for a 2-monad $T$ on a 2-category $\ca K$, $\Algs T$, $\PsAlg T$ and $\tn{Kl}(T)$ the 2-categories of strict $T$-algebras and strict maps, pseudo $T$-algebras and strong{\footnotemark{\footnotetext{Meaning that the coherence data consists of invertible 2-cells}}} maps and the Kleisli 2-category of $T$ respectively. Moreover one has
\[ \begin{array}{lccr} {U^T:\Algs T \longrightarrow \ca K} &&&
{F_T:\ca K \longrightarrow \tn{Kl}(T)} \end{array} \]
the right adjoint part of the Eilenberg-Moore adjunction for $T$, and the left adjoint part of the Kleisli adjunction for $T$ respectively. We proceed now to exhibit explicit descriptions of these for the 2-monads $\Phi_{\ca K}$ and $\Psi_{\ca K}$.

Denote by $\ca K^{[1]}_{\tn{colax}}$ the 2-category of functors $[1] \to \ca K$, colax natural transformations between them, and modifications; and by $\ca K^{[1]}_{\tn{lax}}$ the 2-category of functors $[1] \to \ca K$, lax natural transformations between them, and modifications. Thus a morphism $f \to g$ of $\ca K^{[1]}_{\tn{colax}}$ is the unlabelled data in
\[ \xygraph{{A}="p0" [r] {C}="p1" [d] {D}="p2" [l] {B}="p3" "p0":"p1"^-{}:"p2"^-{g}:@{<-}"p3"^-{}:@{<-}"p0"^-{f} "p0" [d(.55)r(.35)] :@{=>}[r(.3)]^-{}} \]
and for $\ca K^{[1]}_{\tn{lax}}$ the 2-cell is in the opposite direction. Since strict natural transformations are degenerate instances of lax and colax ones, one has canonical inclusions $\ca K^{[1]} \hookrightarrow \ca \ca K^{[1]}_{\tn{colax}}$ and $\ca K^{[1]} \hookrightarrow \ca \ca K^{[1]}_{\tn{lax}}$. The following result is known, though perhaps formulated slightly more globally than usual.
\begin{prop}\label{prop:global-fibrations-monads}
\cite{KockA-FibEMAlg, Street-FibrationIn2cats}
Let $\ca K$ be a 2-category with comma objects.
\begin{enumerate}
\item One has isomorphisms
\[ \begin{array}{llll}
{\PsAlg {\Phi_{\ca K}} \iso \tn{Fib}(\ca K)} &&&
{\Algs {\Phi_{\ca K}} \iso \tn{SFib}(\ca K)} \\
{\PsAlg {\Psi_{\ca K}} \iso \tn{OpFib}(\ca K)} &&&
{\Algs {\Psi_{\ca K}} \iso \tn{SOpFib}(\ca K)} \end{array} \]
commuting with the evident 2-functors into $\ca K^{[1]}$.\label{propcase:algebras-fib-monads}
\item One has isomorphisms
\[ \begin{array}{llll}
{\tn{Kl}(\Phi_{\ca K}) \iso \ca K^{[1]}_{\tn{colax}}} &&&
{\tn{Kl}(\Psi_{\ca K}) \iso \ca K^{[1]}_{\tn{lax}}} \end{array} \]
commuting with the evident 2-functors out of $\ca K^{[1]}$.\label{propcase:kleisli-fib-monads}
\end{enumerate}
\end{prop}
\begin{proof}
In the case $\ca K = \Cat$ of (\ref{propcase:algebras-fib-monads}) is completely standard. See \cite{KockA-FibEMAlg} for a recent discussion. The result for general $\ca K$ follows by a representable argument since the 2-monads $\Phi_{\ca K}$ and $\Psi_{\ca K}$ are described in terms of limits, and the notions of cartesian and opcartesian 2-cell are representable. The isomorphisms (\ref{propcase:kleisli-fib-monads}) are easily exhibited by using the universal property of comma objects and the definition of $\tn{Kl}(\Phi_{\ca K})$ and $\tn{Kl}(\Psi_{\ca K})$.
\end{proof}
The fibre of the codomain 2-functor $\ca K^{[1]} \to \ca K$ over $B \in \ca K$ is exactly the slice 2-category $\ca K/B$, and these 2-monads restrict to the 2-monads $\Phi_{\ca K,B}$ and $\Psi_{\ca K,B}$ on $\ca K/B$ defined originally by Street in \cite{Street-FibrationIn2cats}. Using Proposition \ref{prop:global-fibrations-monads} one then has an explicit description of the algebras of $\Phi_{\ca K,B}$ (resp. $\Psi_{\ca K,B}$) as fibrations (resp. opfibrations) with codomain $B$. Similarly 1-cells $x \to y$ of $\tn{Kl}(\Phi_{\ca K,B})$ (resp. $\tn{Kl}(\Psi_{\ca K,B})$) may be identified with lax triangles
\[ \xygraph{{\xybox{\xygraph{{X}="p0" [r(2)] {Y}="p1" [dl] {B}="p2" "p0":"p1"^-{}:"p2"^-{y}:@{<-}"p0"^-{x} "p0" [d(.5)r(.85)] :@{=>}[r(.3)]^-{}}}}
[r(4)]
{\xybox{\xygraph{{X}="p0" [r(2)] {Y}="p1" [dl] {B.}="p2" "p0":"p1"^-{}:"p2"^-{y}:@{<-}"p0"^-{x} "p0" [d(.5)r(.85)] :@{<=}[r(.3)]^-{}}}}} \]

For any nice symmetric monoidal category $\ca V$ over which one may wish to enrich, one has a notion of monoidal $\ca V$-category, and so in particular taking $\ca V = \Cat$ (with the cartesian tensor product), one has a canonical notion of monoidal 2-category{\footnotemark{\footnotetext{Monoidal bicategories in the most general sense whose underlying bicategory is a 2-category are weaker than this.}}}. In this sense, for any 2-bicategory $\ca B$ and object $B$ therein, the hom $\ca B(B,B)$ is a monoidal 2-category. Following \cite{KellyLack-PropertyLikeStructures, KockA-KZMonads} we define a pseudo monoid in a monoidal 2-category $\ca K$, with unit and multiplication denoted $u:I \to M$ and $m:M \tensor M \to M$, to be \emph{colax idempotent} (resp. \emph{lax idempotent}) when $u \tensor M \ladj m \ladj M \tensor u$ (resp. $M \tensor u \ladj m \ladj u \tensor M$).
\begin{defn}\label{def:pseudo-monad-in-2-bicategory}
Let $\ca B$ be a 2-bicategory and $B \in \ca B$. Then a \emph{pseudo monad} (resp. \emph{2-monad}) on $B$ in $\ca B$ is a pseudo monoid (resp. monoid) in the monoidal 2-category $\ca B(B,B)$. A pseudo monad on $B$ in $\ca B$ is \emph{colax idempotent} (resp. \emph{lax idempotent}) when its corresponding pseudo monoid in $\ca B(B,B)$ is so.
\end{defn}
Writing $[n]$ for the ordinal $\{0 < ... < n\}$ one has a cospan in $\Cat$ as on the left
\[ \xygraph{{\xybox{\xygraph{{[0]}="l" [r] {[1]}="m" [r] {[0]}="r" "l":"m"^-{\delta_1}:@{<-}"r"^-{\delta_0}}}}
[r(3)]
{\xybox{\xygraph{{B}="l" [r] {B^{[1]}}="m" [r] {B}="r" "l":@{<-}"m"^-{d_1}:"r"^-{d_0}}}}
[r(3)]
{\xybox{\xygraph{{B^{[1]}}="p0" [r] {B}="p1" [d] {B}="p2" [l] {B}="p3" "p0":"p1"^-{d_0}:"p2"^-{1_B}:@{<-}"p3"^-{1_B}:@{<-}"p0"^-{d_1} "p0" [d(.5)r(.35)] :@{=>}[r(.3)]^{}}}}
} \]
and cotensoring this with $B$ gives the span in $\ca K$ in the middle, where $Z^A$ denotes the cotensor of $Z \in \ca K$ with the category $A$. This span fits into comma square as depicted on the right in the previous display. To some extent the following result is implicit in the work of Street \cite{Street-FibrationIn2cats, Street-CosmoiOfInternalCats, Street-FibrationInBicats}, and all that we have done is to observe that Street's approach to the fibrations 2-monads exhibits them as polynomial 2-monads. In fact, since the underlying endofunctors come from spans, they are examples of ``linear'' polynomial 2-functors.
\begin{prop}\label{prop:fib-monads-as-poly}
Let $\ca K$ be a 2-category with comma objects and pullbacks and let $B \in \ca K$.
\begin{enumerate}
\item $\Phi_{\ca K,B}$ is the result of applying $\PFun {\ca K}$ to a colax idempotent 2-monad in $\Polyc {\ca K}$ whose underlying endomorphism is
\[ \xygraph{{B}="p1" [r] {B^{[1]}}="p2" [r] {B^{[1]}}="p3" [r] {B.}="p4" "p1":@{<-}"p2"^-{d_0}:"p3"^-{1}:"p4"^-{d_1}} \]
\item $\Psi_{\ca K,B}$ is the result of applying $\PFun {\ca K}$ to a lax idempotent 2-monad in $\Polyc {\ca K}$ whose underlying endomorphism is
\[ \xygraph{{B}="p1" [r] {B^{[1]}}="p2" [r] {B^{[1]}}="p3" [r] {B.}="p4" "p1":@{<-}"p2"^-{d_1}:"p3"^-{1}:"p4"^-{d_0}} \]
\end{enumerate}
\end{prop}
\begin{proof}
The two statements are dual, so we consider just the monad $\Phi_{\ca K,B}$. By the elementary properties of comma squares and pullbacks, one can factor the defining comma square of $\Phi_{\ca K,B}(f) = \Phi_{\ca K}(f)$ as
\[ \xygraph{{1_{\ca K,B} \downarrow f}="p0" [r(1.2)] {B^{[1]}}="p1" [r] {B}="p2" [d] {B}="p3" [l] {B}="p4" [l(1.2)] {A}="p5" "p0":"p1"^-{}:"p2"^-{d_1}:"p3"^-{1_{\ca K,B}}:@{<-}"p4"^-{1_{\ca K,B}}:@{<-}"p5"^-{f}:@{<-}"p0"^-{} "p1":"p4"_-{d_0} "p0":@/^{1.5pc}/"p2"^-{\Phi_{\ca K,B}(f)}
"p0" [d(.5)r(.5)] {\scriptstyle{\tn{pb}}} "p1" [d(.5)r(.65)] :@{=>}[l(.3)]}  \]
and so one has $\Phi_{\ca K,B}(f) = \Sigma_{d_1}\Delta_{d_0}(f)$. This expresses on objects that $\Phi_{\ca K,B}$ is the result of applying $\PFun{\ca K}$ to the endopolynomial of the statement.

Recall that the inclusion $\Delta \hookrightarrow \Cat$ is a cocategory object, and as described in \cite{Lawvere-OrdSumDoct, Street-FibrationInBicats}, the standard presentation of (topologists') $\Delta$ as a subcategory of $\Cat$ has the 2-categorical feature that the successive generating coface and codegeneracy maps are adjoint. Thus the diagram
\begin{equation}\label{diag:KZ-cospan}
\xygraph{!{0;(2,0):} {[0]}="p0" [r] {[1]}="p1" [r] {[2]}="p2" [r] {[3]}="p3" [r(.5)] {......}="p4" "p2" [u] {[0]}="t" [d(2)] {[0]}="b" "p1":"p0"|-{\sigma_0}="a12"
"p2":@<2ex>"p1"^-{\sigma_1}|-{}="a22":"p2"|-{}="a23":@<-2ex>"p1"_-{\sigma_0}|-{}="a24" "a24":@{}"a23"|-{\perp}:@{}"a22"|-{\perp}
"p3":@<4ex>"p2"^-{\sigma_2}|-{}="a32":@<-2ex>"p3"|-{}="a33":"p2"|-{}="a34":@<2ex>"p3"|-{}="a35":@<-4ex>"p2"_-{\sigma_0}|-{}="a36" "a36":@{}"a35"|-{\perp}:@{}"a34"|-{\perp}:@{}"a33"|-{\perp}:@{}"a32"|-{\perp}
"t"(:@/_{2pc}/"p0"_-{t},:@/_{1pc}/"p1"_-{\delta_0=t},:"p2"^-{t},:@/^{1.5pc}/"p3"^-{t}|-{}="tr") "tr" [r(.5)] {...} "b"(:@/^{2pc}/"p0"^-{b},:@/^{1pc}/"p1"^-{\delta_1=b},:"p2"_-{b},:@/_{1.5pc}/"p3"_-{b}|-{}="br") "br" [r(.5)] {...}}
\end{equation}
in which each functor labelled as ``$t$'' picks out the top element of its codomain, and each functor labelled as ``$b$'' picks out a bottom element, is a colax idempotent 2-monad in $\Cospan{\Cat}$. Cotensoring it with an object $B$ in any finitely complete 2-category $\ca K$, gives a colax idempotent 2-monad in $\Span{\ca K}$ and by means of the inclusion $\Span{\ca K} \hookrightarrow \Polyc{\ca K}$, one has a polynomial colax idempotent 2-monad on $B$. The observation that associated 2-monad on $\ca K/B$, obtained via application of $\PFun{\ca K}$, is the 2-monad $\Phi_{\ca K,B}$ is easily verified, and was implicit in Section 2 of \cite{Street-FibrationInBicats}.
\end{proof}
\begin{rem}\label{rem:fibrations-as-coalgebras}
In general $d_1$ is a split fibration and $d_0$ is a split opfibration. In 2-categories $\ca K$, such as the case $\ca K = \Cat$, in which fibrations and opfibrations are exponentiable, it follows that $\Phi_{\ca K,B}$ and $\Psi_{\ca K,B}$ are themselves left adjoints. For such situations fibrations and opfibrations are thus also coalgebras for 2-comonads (ie those obtained from $\Phi_{\ca K,B}$ and $\Psi_{\ca K,B}$ by taking right adjoints), and the forgetful 2-functors $U^{\Phi_{\ca K,B}}$ and $U^{\Psi_{\ca K,B}}$ create all colimits.
\end{rem}

\subsection{Familial 2-functors.}
\label{ssec:fam-2-fun}
We now recall, and to some extent update, the theory of familial 2-functors from \cite{Weber-Fam2fun}. Intuitvely, a familial 2-functor is one that is compatible in an appropriate sense with the theory of fibrations recalled in the previous section. In Section \ref{ssec:fam-from-poly} we will identify conditions on polynomials in a 2-category which ensure that the corresponding 2-functor is familial.

The compatibility of familial 2-functors with the theory of fibrations is expressed formally by the formal theory of monads \cite{Street-FTM}, in which monads in a 2-category $\ca A$ were organised in various useful ways into 2-categories. Since this material is used so extensively in this section, we recall it briefly now.

We denote a monad in a 2-category $\ca A$ as a pair $(A,t)$ where $A \in \ca A$, $t$ is the underlying one-cell of the monad on $A$ in $\ca A$, and we denote the unit and multiplication 2-cells as $\eta^t$ and $\mu^t$ respectively. A \emph{lax morphism} $f : (A,t) \to (B,s)$ in $\ca A$ consists of an arrow $f : A \to B$ in $\ca A$, and a ``coherence'' 2-cell $f^l:sf \to ft$ which satisfies $f^l(\eta^sf) = f\eta^t$ and $f^l(\mu^sf) = (f\mu^t)(f^lt)(sf^l)$. A \emph{colax morphism} $f : (A,t) \to (B,s)$ in $\ca A$ consists of an arrow $f : A \to B$ in $\ca A$, and $f^c:ft \to sf$ satisfying $f^c(f\eta^t) = \eta^sf$ and $f^c(f\mu^t) = (\mu^sf)(sf^c)(f^ct)$. Lax and colax morphisms of monads were called ``monad functors'' and ``monad opfunctors'' in \cite{Street-FTM} respectively.

Given a monad $(A,t)$ in $\ca A$ one may have an associated Eilenberg-Moore object, which is a type of 2-categorical limit, whose universal $0$ and $1$-cell data is denoted as $A^t$ and $u^t : A^t \to A$ respectively. The corresponding colimit notion is that of a Kleisli object, the universal $0$ and $1$-cell data of which is denoted $A_t$ and $f_t : A \to A_t$ respectively. See \cite{Street-FTM} for the precise definitions. When $\ca A = \Cat$, $A^t$ is the category of algebras and $A_t$ is the Kleisli category, and in the general situation $u^t$ has a left adjoint and $f_t$ has a right adjoint. When $\ca A$ is the 2-category of 2-categories, a monad in $\ca A$ is a 2-monad $(\ca K, T)$, and the $0$ and $1$-cell data of the Eilenberg-Moore and Kleisli objects are denoted as
\[ \begin{array}{lccr} {U^T : \Algs T \longrightarrow \ca K} &&&
{F_T : \ca K \longrightarrow \tn{Kl}(T)} \end{array} \]
respectively.

In general, when the 2-category $\ca A$ admits Eilenberg-Moore objects, given monads $(A,t)$ and $(B,s)$ in $\ca A$, and an arrow $f : A \to B$, then 2-cells $f^l:sf \to ft$ providing coherence data of a lax monad morphism are in bijection with liftings $\overline{f}$ to the level of algebras as on the left in
\[ \xygraph{{\xybox{\xygraph{{A^t}="p0" [r] {B^s}="p1" [d] {B}="p2" [l] {A}="p3" "p0":@{.>}"p1"^-{\overline{f}}:"p2"^-{u^s}:@{<-}"p3"^-{f}:@{<-}"p0"^-{u^t}
"p0":@{}"p2"|-{=}}}} [r(3)]
{\xybox{\xygraph{{A_s}="p0" [r] {B_s}="p1" [d] {B}="p2" [l] {A}="p3" "p0":@{.>}"p1"^-{\underline{f}}:@{<-}"p2"^-{f_s}:@{<-}"p3"^-{f}:"p0"^-{f_t}
"p0":@{}"p2"|-{=}}}}} \]
and dually, when $\ca A$ admits Kleisli objects, colax coherence data $f^c:ft \to sf$ is in bijection with extensions of $f$ to $\underline{f}$ as on the right in the previous display.
\begin{rem}\label{rem:split-fib-pb-stable-from-fib-monads}
Let $\ca K$ be a 2-category with comma objects and pullbacks and $f:A \to B$ be a morphism therein. The 2-functor $\Sigma_f:\ca K/A \to \ca K/B$ clearly extends to lax triangles, and so one has $\underline{\Sigma}_f:\tn{Kl}(\Phi_{\ca K,A}) \to \tn{Kl}(\Phi_{\ca K,B})$ such that $\underline{\Sigma}_fF_{\Phi_{\ca K,A}} = F_{\Phi_{\ca K,B}}\Sigma_f$. Such an extension is equivalent to the data of a 2-natural transformation $\sigma_f$ providing the coherence datum for a colax morphism of 2-monads
\[ (\Sigma_f,\sigma_f):(\ca K/A,\Phi_{\ca K,A}) \longrightarrow (\ca K/B,\Phi_{\ca K,B}) \]
and so by taking mates \cite{KellyStreet-ElementsOf2Cats} with respect to $\Sigma_f \ladj \Delta_f$, also to the coherence datum $\delta_f$ for a lax morphism of 2-monads
\[ (\Delta_f,\delta_f):(\ca K/B,\Phi_{\ca K,B}) \longrightarrow (\ca K/A,\Phi_{\ca K,A}). \]
This last is in turn equivalent to lifting $\Delta_f$ to the level of split fibrations, that is to say, to giving a 2-functor $\overline{\Delta}_f:\Algs {\Phi_{\ca K,B}} \to \Algs {\Phi_{\ca K,A}}$ such that $U^{\Phi_A}\overline{\Delta}_f = \Delta_fU^{\Phi_B}$. The 2-functor $\overline{\Delta}_f$ witnesses the pullback stability of split fibrations.
\end{rem}
We recall now the familial 2-functors of \cite{Weber-Fam2fun}. For a 2-functor $T:\ca K \to \ca L$ and an object $X \in \ca K$, we denote by $T_X : \ca K/X \to \ca L/TX$ the 2-functor given on objects by applying $T$ to morphisms into $X$. A \emph{local right adjoint} $\ca K \to \ca L$ is a 2-functor $T:\ca K \to \ca L$ equipped with a left adjoint to $T_X$ for all $X \in \ca K$. Given an adjunction as on the left
\[ \xygraph{*{\xybox{\xygraph{{\ca A}="p0" [r(2)] {\ca B}="p1" "p0":@<-1.2ex>"p1"_-{R}|-{}="bot":@<-1.2ex>"p0"_-{L}|-{}="top" "top":@{}"bot"|-{\perp}}}} [r(5)]
*!(0,.175){\xybox{\xygraph{{\ca A/X}="p0" [r(2)] {\ca B/RX}="p1" "p0":@<-1.2ex>"p1"_-{R_X}|-{}="bot":@<-1.2ex>"p0"_-{}|-{}="top" "top":@{}"bot"|-{\perp}}}}} \]
and $X \in \ca A$, one always has an adjunction as on the right in the previous display. Thus when $\ca K$ has a terminal object $1$, to exhibit $T$ as a local right adjoint, it suffices to give a left adjoint to $T_1$.
\begin{defn}\label{def:Familial}
Let $\ca K$ and $\ca L$ be 2-categories with comma objects, and suppose that $\ca K$ has a terminal object $1$. Then a \emph{familial 2-functor} $\ca K \to \ca L$ consists of local right adjoint $T:\ca K \to \ca L$ together with $\overline{T}_1 : \ca K \to \Algs {\Phi_{T1}}$ such that $U^{\Phi_{T1}}\overline{T}_1 = T_1$.
\end{defn}
Familial 2-functors are exactly those 2-functors which are compatible with fibrations because of
\begin{prop}\label{prop:Fam2fun-fib}
Let $\ca K$ and $\ca L$ be 2-categories with comma objects, suppose that $\ca K$ has a terminal object $1$ and let $T$ be a 2-functor. To give $T$ the structure of a familial 2-functor is to give the coherence data $\phi^T$ for a lax morphism
\[ (T^{[1]},\phi^T) : (\ca K^{[1]},\Phi_{\ca K}) \longrightarrow (\ca L^{[1]},\Phi_{\ca L}) \]
of 2-monads.
\end{prop}
\begin{proof}
It is straight forward to adapt Theorem 6.6 of \cite{Weber-Fam2fun} to provide, given $\overline{T}_1$, the lifting of $T^{[1]}$ to a 2-functor $\Algs {\Phi_{\ca K}} \to \Algs {\Phi_{\ca L}}$. Conversely, given such a lifting one recovers $\overline{T}_1$ by restricting to split fibrations in $\ca K$ into $1$.
\end{proof}
Implicit in Definition 5.2 of \cite{Weber-Fam2fun} is the idea that morphisms of familial 2-functors are simply cartesian 2-natural transformations. In this work we wish to adopt a more specialised notion which is compatible with Propostion \ref{prop:Fam2fun-fib}.
\begin{defn}\label{def:Fam-nat}
Let $S$ and $T$ be familial 2-functors $\ca K \to \ca L$. Then a 2-natural transformation $\alpha:S \to T$ is \emph{familial} when it is cartesian and for all $X \in \ca K$, $\alpha$'s naturality square with respect to the unique map $t_X : X \to 1$ is a morphism of split fibrations $(\alpha_X,\alpha_1) : St_X \to Tt_X$.
\end{defn}
\begin{lem}\label{lem:fam-2-cell}
Let $\alpha:S \to T$ be a familial natural transformation between familial 2-functors $\ca K \to \ca L$, and $f:A \to B$ be a split fibration in $\ca K$. Then the naturality square
\[ \xygraph{!{0;(1.5,0):(0,.6667)::} {SA}="p0" [r] {TA}="p1" [d] {TB}="p2" [l] {SB}="p3" "p0":"p1"^-{\alpha_A}:"p2"^-{Tf}:@{<-}"p3"^-{\alpha_B}:@{<-}"p0"^-{Sf}} \]
is a morphism $(\alpha_A,\alpha_B) : Sf \to Tf$ of split fibrations in $\ca L$.
\end{lem}
\begin{proof}
Recall from Lemma 6.3 of \cite{Weber-Fam2fun} that a 2-cell $\psi$ as indicated on the left in
\[ \xygraph{{\xybox{\xygraph{{X}="p0" [r] {SA}="p1" "p0":@/^{1pc}/"p1"^-{x}|(.4){}="t" "p0":@/_{1pc}/"p1"_-{y}|(.4){}="b" "t":@{}"b"|(.2){}="dom"|(.8){}="cod" "dom":@{=>}"cod"^{\psi}}}} [r(4)]
{\xybox{\xygraph{!{0;(2,0):(0,.5)::} {X}="p0" [r] {SD}="p1" [d] {SA}="p2" [l] {SC}="p3" "p0":"p1"^-{g_2}:"p2"^-{Sh_2}:@{<-}"p3"^-{Sh_1}:@{<-}"p0"^-{g_1} "p3":"p1"|-{S\delta}
"p0" [d(.4)r(.15)] :@{=>}[r(.2)]^-{\psi_1} "p1" [d(.7)l(.35)] :@{=>}[r(.2)]^-{S\psi_2}}}}} \]
is chosen $Sf$-cartesian if and only if $\psi_2$ is chosen $f$-cartesian, where $x = S(h_1)g_1$ and $y = S(h_2)g_2$ are $S$-generic factorisations in the sense of \cite{Weber-Fam2fun}, and $\psi_1$ and $\psi_2$ are unique such that the composite on the right in the previous display equal to $\psi$ and $\psi_1$ is chosen $St_D$-cartesian. We must show that if $\psi$ is chosen $Sf$-cartesian, then $\alpha_A\psi$ is chosen $Tf$-cartesian. In
\[ \xygraph{!{0;(2,0):(0,.5)::}
{X}="p0" [r] {SD}="p1" [d] {SA}="p2" [l] {SC}="p3" "p0":"p1"^-{g_2}:"p2"^-{Sh_2}:@{<-}"p3"^-{Sh_1}:@{<-}"p0"^-{g_1} "p3":"p1"|-{S\delta}
"p0" [d(.4)r(.15)] :@{=>}[r(.2)]^-{\psi_1} "p1" [d(.7)l(.35)] :@{=>}[r(.2)]^-{S\psi_2}
"p1" [d(1)r(.8)] {TD}="q1" [d] {TA}="q2" [l] {TC}="q3" "q1":"q2"^-{Th_2}:@{<-}"q3"^-{Th_1}:"q1"^(.4){T\delta}
"q1" [d(.68)l(.35)] :@{=>}[r(.2)]^-{T\psi_2}
"p1":"q1"^-{\alpha_D} "p2":"q2"|(.45)*=<3pt>{}|(.7)*=<15pt>{} "p3":"q3"_-{\alpha_C}} \]
$\alpha_D\psi_1$ is chosen $Tt_D$-cartesian since $(\alpha_D,\alpha_1)$ is a morphism of split fibrations $St_D \to Tt_D$, and so the result follows by Lemma 6.3 of \cite{Weber-Fam2fun} applied to $T$.
\end{proof}
When $T:\ca K \to \ca L$ is familial we denote by $\overline{T} : \Algs {\Phi_{\ca K}} \to \Algs {\Phi_{\ca L}}$ the lifting of $T^{[1]}$ corresponding to $\phi^T$. Then one obtains the following result immediately from Lemma \ref{lem:fam-2-cell}.
\begin{prop}\label{prop:fam-2-cell-fib}
Let $S$ and $T$ be familial 2-functors and $\alpha : S \to T$ be a cartesian natural transformation between them. Then $\alpha$ is familial if and only if it lifts to a 2-natural transformation $\overline{\alpha}:\overline{S} \to \overline{T}$.
\end{prop}
We freely use and apply the dual notions and results, an opfamilial 2-functor being one which is compatible in the same way with opfibrations. A familial (resp. opfamilial) 2-monad is one whose underlying endo-2-functor, unit and multiplication are familial (resp. familial).
\begin{rem}\label{rem:examples-from-fam2fun}
All the examples familial and opfamilial 2-monads exhibited in \cite{Weber-Fam2fun} turn out to be familial and opfamilial 2-monads in the more restricted sense of this article. For those that arise from polynomials, we shall establish this from general results below.
\end{rem}

\subsection{Familial 2-functors from polynomials.}
\label{ssec:fam-from-poly}
In this section we identify conditions on polynomials in a 2-category and their morphisms, ensuring that the associated 2-functors and 2-natural transformations are familial. The central result to this effect appears in this section as Theorem \ref{thm:fibrational-polynomials}. The task of proving this result breaks up into that of identifying conditions ensuring that 2-functors of the form $\Delta_f$ and $\Pi_f$, and the ``Beck-Chevalley 2-cells'' lift to the level of split fibrations. These conditions are presented in turn in Lemmas \ref{lem:PTJ-fib}-\ref{lem:right-part-poly-2-cell} below. 
\begin{lem}\label{lem:PTJ-fib}
\cite{Johnstone-FibrationsPartialProducts}
Let $f:A \to B$ be a split opfibration in a 2-category $\ca K$ with pullbacks and comma objects.
\begin{enumerate}
\item One has $\underline{\Delta}_f : \tn{Kl}(\Phi_{\ca K,B}) \to \tn{Kl}(\Phi_{\ca K,A})$ such that $\underline{\Delta}_fF_{\Phi_{\ca K,B}} = F_{\Phi_{\ca K,A}}\Delta_f$.\label{lemcase:PTJ-split}
\item If $f$ is a discrete opfibration then $\underline{\Sigma}_f \ladj \underline{\Delta}_f$.\label{lemcase:PTJ-discrete}
\end{enumerate}
\end{lem}
\begin{proof}
(\ref{lemcase:PTJ-split}): Given $(k,\gamma) : x \to y$ in $\tn{Kl}(\Phi_{\ca K,B})$, we pullback to obtain $X_2$ and $Y_2$ in
\[ \xygraph{!{0;(.6,0):(0,1.2)::} {X_2}="ltl" [r(3)u] {Y_2}="ltr" [d(3)l] {A}="lb" [r(4)u(2)] {X}="rtl" [r(3)u] {Y}="rtr" [d(3)l] {B}="rb" "ltl":"rtl"^-{q_1}:"rb"|-{}="d1"_-{x}:@{<-}^-{f}"lb":@{<-}"ltl"|-{}="d2"^-{\Delta_fx}:@/^{1.75pc}/@{.>}"lb"|-{}="c22"^-{h_2}
"ltr":"rtr"^-{q_2}:"rb"|-{}="c1"^-{y}:@{<-}"lb":@{<-}"ltr"|-{}="c2"_-{\Delta_fy}|(.67)*=<3pt>{} "rtl":"rtr"^-{k} "ltl":@{.>}"ltr"^-{k_2}
"d1":@{}"c1"|(.35){}="d3"|(.65){}="c3" "d3":@{=>}"c3"^-{\gamma}
"d2":@{}"c22"|(.25){}="d4"|(.75){}="c4" "d4":@{=>}"c4"^-{\gamma_2}} \]
$\gamma_2$ is the chosen $f$-opcartesian lift of $\gamma q_1$, and then $k_2$ is unique such that $\Delta_f(y)k_2 = h_2$ and $q_2k_2 = kq_1$. One defines $\underline{\Delta}_f(k,\gamma) = (k_2,\gamma_2)$. Given a 2-cell $\kappa:(k,\gamma) \to (k',\gamma')$, then by the opcartesianness of $\gamma_2$ one has $\kappa_2:\Delta_f(y)k_2 \to \Delta_f(y)k'_2$ unique such that $\kappa_2\gamma_2 = \gamma'_2$ and $f\kappa_2 = y\kappa q_1$. By the 2-dimensional universal property of the pullback defining $Y_2$, one has $\kappa_3:k_2 \to k'_2$ unique such that $\Delta_f(y)\kappa_3 = \kappa_2$ and $q_2\kappa_3 = \kappa q_1$. We then define $\underline{\Delta}_f(\kappa) = \kappa_3$. The 2-functoriality of these assignations follows easily from the uniqueness of the various lifts in these constructions and the 2-dimensional universal property of pullbacks in $\ca K$.

(\ref{lemcase:PTJ-discrete}): Note that in the above diagram $q_1$ and $q_2$ are the components of the counit of $\Sigma_f \ladj \Delta_f$ at $x$ and $y$ respectively. Thus the commutativity of the diagram ensures that the counit is natural with respect to lax triangles, and 2-naturality follows similarly. It remains to verify the 2-naturality of the unit of $\Sigma_f \ladj \Delta_f$ with respect to lax triangles when $f$ is a discrete opfibration. Denoting the defining pullback of $\Delta_f\Sigma_fx$ as on the left in
\[ \xygraph{{\xybox{\xygraph{{X_2}="p0" [r] {X}="p1" [d] {B}="p2" [l] {A}="p3" "p0":"p1"^-{q_x}:"p2"^-{fx}:@{<-}"p3"^-{f}:@{<-}"p0"^-{\Delta_f\Sigma_fx}:@{}"p2"|-{\tn{pb}}}}}
[r(3.5)u(.1)]
{\xybox{\xygraph{{X}="p0" [r(2)] {Y}="p1" [dl] {A}="p2" "p0":"p1"^-{k}:"p2"^-{y}:@{<-}"p0"^-{x} "p0" [d(.5)r(.85)] :@{=>}[r(.3)]^-{\gamma}}}}} \]
$\eta_x:X \to X_2$ is unique such that $\Delta_f\Sigma_f(x)\eta_x = x$ and $q_x\eta_x = 1_X$. Naturality with respect to $(k,\gamma)$ amounts to checking that $\eta_yk = k_2\eta_x$ and $\gamma = \gamma_2\eta_x$ in
\[ \xygraph{!{0;(.4,0):(0,1.5)::}
{X}="lt1" [r(3)u] {Y}="rt1" [d(2)l(1)] {A}="bt1"
"lt1":"rt1"^(.4){k}:"bt1"^-{y}|-{}="pc1":@{<-}"lt1"^-{x}|-{}="pd1"
"pd1":@{}"pc1"|(.25){}="d1"|(.75){}="c1" "d1":@{=>}"c1"^{\gamma}
"bt1" :[d(1.5)] {B}="bot"^{f} :@{<-}[l(6)] {A}="bt2"^-{f}
"lt1" :@{<-}[l(6)] {X_2}="lt2"_(.4){q_x} "rt1" :@{<-}[l(6)] {Y_2}="rt2"_-{q_y}
"lt2":"rt2"^(.4){k_2}:"bt2"|(.285)*=<3pt>{}^-{\Delta_f\Sigma_fy}|-{}="pc2":@{<-}"lt2"^-{}|-{}="pd2"
"pd2":@{}"pc2"|(.25){}="d2"|(.75){}="c2" "d2":@{=>}"c2"^{\gamma_2}
"lt2" :@{<-}[l(6)] {X}="lt3"_-{\eta_x} "rt2" :@{<-}[l(6)] {Y}="rt3"_-{\eta_y}
"bt2" {}="bt3" "lt3":"rt3"^(.4){k}:"bt3"|(.285)*=<3pt>{}^-{}|-{}="pc3":@{<-}"lt3"^-{x}|-{}="pd3"
"pd3":@{}"pc3"|(.25){}="d3"|(.75){}="c3" "d3":@{=>}"c3"^{\gamma}} \]
in which $\gamma_2$ is unique such that $f\gamma_2 = f\gamma q_x$. Thus $f\gamma_2\eta_x = f\gamma$ and so since $f$ is a discrete opfibration, the uniqueness of lifts implies $\gamma_2\eta_x = \gamma$ and $\Delta_f\Sigma_f(y)k_2\eta_x = ky$. From this last one obtains the first equation of
\[ \begin{array}{lccr} {\Delta_f\Sigma_f(y)\eta_y k = \Delta_f\Sigma_f(y)k_2\eta_x} &&&
{q_yk_2\eta_x = q_y\eta_yk} \end{array} \]
and the second equation is also easily verified, so that by the joint monicness of $(\Delta_f\Sigma_f(y),q_y)$, one has $\eta_yk = k_2\eta_x$. The verification of the 2-dimensional part of naturality proceeds straight forwardly along similar lines, and so is left to the reader.
\end{proof}
\begin{lem}\label{lem:PTJ-exp-fib}
\cite{Johnstone-FibrationsPartialProducts}
Let $f:A \to B$ be an exponentiable split opfibration in a 2-category $\ca K$ with pullbacks and comma objects.
\begin{enumerate}
\item One has $\overline{\Pi}_f : \Algs {\Phi_{\ca K,A}} \to \Algs {\Phi_{\ca K,B}}$ such that $U^{\Phi_{\ca K,B}}\overline{\Pi}_f = \Pi_fU^{\Phi_{\ca K,A}}$.\label{lemcase:PTJ-exp-split}
\item If $f$ is a discrete opfibration then $\overline{\Delta}_f \ladj \overline{\Pi}_f$.\label{lemcase:PTJ-exp-discrete}
\end{enumerate}
\end{lem}
\begin{proof}
(\ref{lemcase:PTJ-exp-split}): By the formal theory of monads the data of the extension $\underline{\Delta}_f$ of Lemma \ref{lem:PTJ-fib}(\ref{lemcase:PTJ-split}) is equivalent to that of a 2-natural transformation $\partial_f$ providing the coherence datum for an oplax morphism
\[ (\Delta_f,\partial_f):(\ca K/B,\Phi_{\ca K,B}) \longrightarrow (\ca K/A,\Phi_{\ca K,A}). \]
Since $f$ is exponentiable, this is in turn equivalent, by taking mates with respect to the adjunction $\Delta_f \ladj \Pi_f$, to the data of a 2-natural transformation $\pi_f$ providing the coherence datum of a lax morphism
\[ (\Pi_f,\pi_f):(\ca K/A,\Phi_{\ca K,A}) \longrightarrow (\ca K/B,\Phi_{\ca K,B}). \]
This last is in turn equivalent to the lifting $\overline{\Pi}_f$ of $\Pi_f$ to the level of split fibrations.

(\ref{lemcase:PTJ-exp-discrete}): The data of the adjunction $\underline{\Sigma}_f \ladj \underline{\Delta}_f$ established in Lemma \ref{lem:PTJ-fib}(\ref{lemcase:PTJ-discrete}) corresponds by the formal theory of monads to the data of an adjunction of oplax morphisms of 2-monads whose underlying 2-adjunction is $\Sigma_f \ladj \Delta_f$. This means that in addition to having the oplax coherence data $\sigma_f$ and $\partial_f$ described above, one has the compatibility of the unit $\eta_f$ and counit $\varepsilon_f$ of the adjunction $\Sigma_f \ladj \Delta_f$ with respect to these coherences, which in explicit terms is the commutativity of
\[ \xygraph{{\xybox{\xygraph{!{0;(2,0):(0,.5)::} {\Phi_{\ca K,A}}="p0" [r] {\Delta_f\Sigma_f\Phi_{\ca K,A}}="p1" [d] {\Delta_f\Phi_{\ca K,B}\Sigma_f}="p2" [l] {\Phi_{\ca K,A}\Delta_f\Sigma_f}="p3" "p0":"p1"^-{\eta_f\Phi_{\ca K,A}}:"p2"^-{\Delta_f\sigma_f}:"p3"^-{\partial_f\Sigma_f}:@{<-}"p0"^-{\Phi_{\ca K,A}\eta_f}}}}
[r(5)d(.05)]
{\xybox{\xygraph{!{0;(2,0):(0,.5)::} {\Sigma_f\Delta_f\Phi_{\ca K,B}}="p0" [r] {\Sigma_f\Phi_{\ca K,A}\Delta_f}="p1" [d] {\Phi_{\ca K,B}\Sigma_f\Delta_f.}="p2" [l] {\Phi_{\ca K,B}}="p3" "p0":"p1"^-{\Sigma_f\partial_f}:"p2"^-{\sigma_f\Delta_f}:"p3"^-{\Phi_{\ca K,B}\varepsilon_f}:@{<-}"p0"^-{\varepsilon_f\Phi_{\ca K,B}}}}}} \]
In terms of string diagrams (which go from top to bottom) in the 2-category of 2-categories (see \cite{JoyalStreet-GeometryTensorCalculus}), the above commutative diagrams are expressed as
\begin{equation}\label{eq:oplax-adjunction}
\xygraph{{\xybox{
\xygraph{
{\xybox{\xygraph{!{0;(.5,0):(0,1.2)::} 
{\scriptstyle{\eta_f}} *\xycircle<6pt,5pt>{-}="p0" [dr]
{\scriptstyle{\sigma_f}} *\xycircle<6pt,5pt>{-}="p1" [dl]
{\scriptstyle{\partial_f}} *\xycircle<6pt,5pt>{-}="p2"
"p0"-"p1"-"p2"
"p1" (-[u(2)r] {\Phi_{\ca K,A}},-[d(2)r] {\Sigma_f})
"p2" (-[l(.75)d] {\Phi_{\ca K,A}},-[r(.75)d] {\Delta_f},-@`{"p2"+(-1,1)}"p0")}}}
[r] {=} [r(1.25)]
{\xybox{\xygraph{!{0;(.5,0):(0,1.2)::} 
{\scriptstyle{\eta_f}} *\xycircle<6pt,5pt>{-}="p0"
"p0" (-[l(.75)d(2)] {\Delta_f},-[r(.75)d(2)] {\Sigma_f},[l(2)u(2)] {\Phi_{\ca K,A}} -[d(4)] {\Phi_{\ca K,A}})}}}}
}}
[r(5)]
{\xybox{
\xygraph{
{\xybox{\xygraph{!{0;(-.5,0):(0,1.2)::} 
{\scriptstyle{\varepsilon_f}} *\xycircle<6pt,5pt>{-}="p0" [dr]
{\scriptstyle{\sigma_f}} *\xycircle<6pt,5pt>{-}="p1" [dl]
{\scriptstyle{\partial_f}} *\xycircle<6pt,5pt>{-}="p2"
"p0"-"p1"-"p2"
"p1" (-[u(2)r] {\Phi_{\ca K,B}},-[d(2)r] {\Sigma_f})
"p2" (-[l(.75)d] {\Phi_{\ca K,B}},-[r(.75)d] {\Delta_f},-@`{"p2"+(-1,1)}"p0")}}}
[r(1.125)] {=} [r(1.125)]
{\xybox{\xygraph{!{0;(-.5,0):(0,1.2)::} 
{\scriptstyle{\varepsilon_f}} *\xycircle<6pt,5pt>{-}="p0"
"p0" (-[l(.75)d(2)] {\Delta_f},-[r(.75)d(2)] {\Sigma_f},[l(2)u(2)] {\Phi_{\ca K,B}} -[d(4)] {\Phi_{\ca K,B}})}}}}}}}
\end{equation}
It suffices now to verify that the lax coherence data $\delta_f$ and $\pi_f$ are compatible with the unit $\eta'_f$ and counit $\varepsilon'_f$ of $\Delta_f \ladj \Sigma_f$ in the same way, which is to say that
\begin{equation}\label{eq:lax-adjunction}
\xygraph{{\xybox{
\xygraph{
{\xybox{\xygraph{!{0;(-.5,0):(0,-1.2)::} 
{\scriptstyle{\eta'_f}} *\xycircle<6pt,5pt>{-}="p0" [dr]
{\scriptstyle{\pi_f}} *\xycircle<6pt,5pt>{-}="p1" [dl]
{\scriptstyle{\delta_f}} *\xycircle<6pt,5pt>{-}="p2"
"p0"-"p1"-"p2"
"p1" (-[u(2)r] {},-[d(2)r] {})
"p2" (-[l(.75)d] {},-[r(.75)d] {},-@`{"p2"+(-1,1)}"p0")}}}
[r] {=} [r(.75)]
{\xybox{\xygraph{!{0;(-.5,0):(0,-1.2)::} 
{\scriptstyle{\eta'_f}} *\xycircle<6pt,5pt>{-}="p0"
"p0" (-[l(.75)d(2)] {},-[r(.75)d(2)] {},[l(1.5)u(2)] {} -[d(4)] {})}}}}
}}
[r(5)]
{\xybox{
\xygraph{
{\xybox{\xygraph{!{0;(.5,0):(0,-1.2)::} 
{\scriptstyle{\varepsilon'_f}} *\xycircle<6pt,5pt>{-}="p0" [dr]
{\scriptstyle{\pi_f}} *\xycircle<6pt,5pt>{-}="p1" [dl]
{\scriptstyle{\delta_f}} *\xycircle<6pt,5pt>{-}="p2"
"p0"-"p1"-"p2"
"p1" (-[u(2)r] {},-[d(2)r] {})
"p2" (-[l(.75)d] {},-[r(.75)d] {},-@`{"p2"+(-1,1)}"p0")}}}
[r] {=} [r]
{\xybox{\xygraph{!{0;(.5,0):(0,-1.2)::} 
{\scriptstyle{\varepsilon'_f}} *\xycircle<6pt,5pt>{-}="p0"
"p0" (-[l(.75)d(2)] {},-[r(.75)d(2)] {},[l(1.5)u(2)] {} -[d(4)] {})}}}}}}}
\end{equation}
because then by the formal theory of monads this corresponds to giving the rest of the data of the required adjunction $\overline{\Delta}_f \ladj \overline{\Pi}_f$. The definition of $\delta_f$ and $\pi_f$ as mates of $\sigma_f$ and $\partial_f$ in explicit terms says that
\begin{equation}\label{eq:oplax-lax-mates}
\xygraph{{\xybox{
\xygraph{
{\xybox{\xygraph{!{0;(.5,0):(0,1.2)::} 
{\scriptstyle{\eta_f}} *\xycircle<6pt,5pt>{-}="p0" [dr]
{\scriptstyle{\sigma_f}} *\xycircle<6pt,5pt>{-}="p1" [dr]
{\scriptstyle{\varepsilon_f}} *\xycircle<6pt,5pt>{-}="p2"
"p0"(-"p1"-"p2",-[d(3)l])
"p1" (-[d(2)l(.75)],-[u(2)r(.75)])
"p2" -[u(3)r]}}}
[r(1.5)] {=} [r]
{\xybox{\xygraph{!{0;(.5,0):(0,1.2)::} 
{\scriptstyle{\delta_f}} *\xycircle<6pt,5pt>{-}="p0"
"p0" (-[l(.75)d(2)],-[r(.75)d(2)],-[l(.75)u(2)],-[r(.75)u(2)])}}}}}}
[r(5)]
{\xybox{
\xygraph{
{\xybox{\xygraph{!{0;(.5,0):(0,1.2)::} 
{\scriptstyle{\eta'_f}} *\xycircle<6pt,5pt>{-}="p0" [dr]
{\scriptstyle{\partial_f}} *\xycircle<6pt,5pt>{-}="p1" [dr]
{\scriptstyle{\varepsilon'_f}} *\xycircle<6pt,5pt>{-}="p2"
"p0"(-"p1"-"p2",-[d(3)l])
"p1" (-[d(2)l(.75)],-[u(2)r(.75)])
"p2" -[u(3)r]}}}
[r(1.5)] {=} [r]
{\xybox{\xygraph{!{0;(.5,0):(0,1.2)::} 
{\scriptstyle{\pi_f}} *\xycircle<6pt,5pt>{-}="p0"
"p0" (-[l(.75)d(2)],-[r(.75)d(2)],-[l(.75)u(2)],-[r(.75)u(2)])}}}}}}}
\end{equation}
and so the left equation of (\ref{eq:lax-adjunction}) is established by the calculation
\[ \xygraph{
{\xybox{\xygraph{!{0;(-.35,0):(0,-1.2)::} 
{\scriptstyle{\eta'}} *\xycircle<4pt,6pt>{-}="p0" [dr]
{\scriptstyle{\pi}} *\xycircle<4pt,4pt>{-}="p1" [dl]
{\scriptstyle{\delta}} *\xycircle<4pt,4pt>{-}="p2"
"p0"-"p1"-"p2"
"p1" (-[u(2)r] {},-[d(2)r] {})
"p2" (-[l(.75)d] {},-[r(.75)d] {},-@`{"p2"+(-1,1)}"p0")}}}
[r(.8)] {=} [r(1.2)]
{\xybox{\xygraph{!{0;(.35,0):(0,1.2)::} 
{\scriptstyle{\eta'}} *\xycircle<4pt,6pt>{-}="p0" [dr]
{\scriptstyle{\partial}} *\xycircle<4pt,4pt>{-}="p1" [dr]
{\scriptstyle{\varepsilon'}} *\xycircle<4pt,6pt>{-}="p2" [ur]
{\scriptstyle{\eta'}} *\xycircle<4pt,6pt>{-}="p3"
"p0" [d(2)] {\scriptstyle{\eta}} *\xycircle<4pt,4pt>{-}="p4"
"p1" [d(2)] {\scriptstyle{\sigma}} *\xycircle<4pt,4pt>{-}="p5"
"p2" [d(2)] {\scriptstyle{\varepsilon}} *\xycircle<4pt,4pt>{-}="p6"
"p0"-"p1"-"p2"-"p3" "p4"-"p5"-"p6"
"p0" -@`{"p1"+(-2,0),"p5"+(-2,0)}[d(5)l]
"p1" (-[u(2)r],-"p5")
"p3" -@`{"p3"+(1,-1)}"p6"
"p4" -[d(3)l(.25)]
"p5" -[d(2)l(.5)]}}}
[r(1.1)] {=} [r(1.1)]
{\xybox{\xygraph{!{0;(.35,0):(0,1.2)::} 
{\scriptstyle{\eta'}} *\xycircle<4pt,6pt>{-}="p0" [r(2)d]
{\scriptstyle{\partial}} *\xycircle<4pt,4pt>{-}="p1"
"p0" [d] {\scriptstyle{\eta}} *\xycircle<4pt,4pt>{-}="p4" [dr]
{\scriptstyle{\sigma}} *\xycircle<4pt,4pt>{-}="p5" [dr]
{\scriptstyle{\varepsilon}} *\xycircle<4pt,4pt>{-}="p6"
"p0"-"p1" "p4"-"p5"-"p6"
"p0" -@`{"p4"+(-1,0),"p5"+(-2,0)}[d(4)l]
"p1" (-[u(2)r(.5)],-"p5",-@`{"p5"+(2,0)}"p6")
"p4" -[d(3)l(.25)]
"p5" -[d(2)l(.5)]}}}
[r(.9)] {=} [r(.9)]
{\xybox{\xygraph{!{0;(.35,0):(0,1.2)::}
{\scriptstyle{\eta'}} *\xycircle<4pt,6pt>{-}="p0" [d]
{\scriptstyle{\eta}} *\xycircle<4pt,4pt>{-}="p1" [dr]
{\scriptstyle{\varepsilon}} *\xycircle<4pt,4pt>{-}="p2"
"p0" (-@`{"p1"+(-1,0)}[d(3)l],-@`{"p0"+(1,-.5),"p2"+(1,.5)}"p2")
"p1" (-[d(2)l(.5)],-"p2" [r(.75)d] -[u(4)])}}}
[r(.9)] {=} [r(.8)]
{\xybox{\xygraph{!{0;(-.35,0):(0,-1.2)::} 
{\scriptstyle{\eta'}} *\xycircle<4pt,6pt>{-}="p0"
"p0" (-[l(.75)d(2)] {},-[r(.75)d(2)] {},[l(1.5)u(2)] {} -[d(4)] {})}}}} \]
which uses (\ref{eq:oplax-adjunction}), (\ref{eq:oplax-lax-mates}) and the adjunction triangle equations. The right equation in (\ref{eq:lax-adjunction}) follows from a similar calculation.
\end{proof}
Recall that given a commutative square in $\ca K$ as on the left
\begin{equation}\label{eq:Beck-Chev}
\xygraph{!{0;(3,0):}
{\xybox{\xygraph{!{0;(1.2,0):} {A}="tl" [r] {C}="tr" [d] {D}="br" [l] {B}="bl" "tl":"tr"^-{h}:"br"^-{g}:@{<-}"bl"^-{k}:@{<-}"tl"^-{f}}}} [r]
{\xybox{\xygraph{!{0;(1.2,0):} {\ca K/A}="tl" [r] {\ca K/C}="tr" [d] {\ca K/D}="br" [l] {\ca K/B}="bl" "tl":"tr"^-{\Sigma_h}:@{<-}"br"^-{\Delta_g}:@{<-}"bl"^-{\Sigma_k}:"tl"^-{\Delta_f} [d(.5)r(.35)] :@{=>}[r(.3)]^-{\alpha}}}} [r]
{\xybox{\xygraph{!{0;(1.2,0):} {\ca K/A}="tl" [r] {\ca K/C}="tr" [d] {\ca K/D}="br" [l] {\ca K/B}="bl" "tl":@{<-}"tr"^-{\Delta_h}:"br"^-{\Pi_g}:"bl"^-{\Delta_k}:@{<-}"tl"^-{\Pi_f} [d(.5)r(.35)] :@{<=}[r(.3)]^-{\beta}}}}}
\end{equation}
one has a canonical 2-natural transformation $\alpha$ is obtained from $\Sigma_k\Sigma_f = \Sigma_g\Sigma_h$ and the adjunctions $\Sigma_f \ladj \Delta_f$ and $\Sigma_g \ladj \Delta_g$. If $f$ and $g$ are exponentiable, then by taking right adjoints one has a canonical 2-natural transformation $\beta$ as on the right.
\begin{lem}\label{lem:Beck-Chev-lax-slices}
If in a 2-category $\ca K$ with comma objects and pullbacks, the left-most square in (\ref{eq:Beck-Chev}) underlies a morphism of split opfibrations $f \to g$, then $\alpha$ extends to a 2-natural transformation $\underline{\alpha}$ as on the left
\[ \xygraph{{\xybox{\xygraph{!{0;(2,0):(0,.5)::} {\tn{Kl}(\Phi_{\ca K,A})}="tl" [r] {\tn{Kl}(\Phi_{\ca K,C})}="tr" [d] {\tn{Kl}(\Phi_{\ca K,D})}="br" [l] {\tn{Kl}(\Phi_{\ca K,B})}="bl" "tl":"tr"^-{\underline{\Sigma}_h}:@{<-}"br"^-{\underline{\Delta}_g}:@{<-}"bl"^-{\underline{\Sigma}_k}:"tl"^-{\underline{\Delta}_f} [d(.5)r(.4)] :@{=>}[r(.2)]^-{\underline{\alpha}}}}}
[r(4)]
{\xybox{\xygraph{!{0;(2,0):(0,.5)::}
{\Algs {\Phi_{\ca K,A}}}="tl" [r] {\Algs {\Phi_{\ca K,C}}}="tr" [d] {\Algs {\Phi_{\ca K,D}}}="br" [l] {\Algs {\Phi_{\ca K,B}}}="bl"
"tl":@{<-}"tr"^-{\overline{\Delta}_h}:"br"^-{\overline{\Pi}_g}:"bl"^-{\overline{\Delta}_k}:@{<-}"tl"^-{\overline{\Pi}_f} [d(.5)r(.4)] :@{<=}[r(.2)]^-{\overline{\beta}}
}}}} \]
and if $f$ and $g$ are exponentiable, then $\beta$ lifts to a 2-natural transformation $\overline{\beta}$ as on the right.
\end{lem}
\begin{proof}
By the formal theory of monads $\alpha$ extends to $\underline{\alpha}$ if and only if $\alpha$ underlies a monad 2-cell, that is to say, satisfies axioms of compatibility with the colax monad morphism structures on $\Sigma_h\Delta_f$ and $\Delta_g\Sigma_k$. When $f$ and $g$ are exponentiable this in turn is equivalent, by the calculus of mates, to asking that $\beta$ satisfy compatibility with respect to the lax monad morphism structures on $\Pi_f\Delta_h$ and $\Delta_k\Pi_g$, which is moreover equivalent to the existence of the lifting $\overline{\beta}$. Thus it suffices to exhibit $\underline{\alpha}$, in other words, that $\alpha$ is 2-natural with respect to lax triangles.

For the 1-dimensional part of this naturality note that the component of $\alpha$ at $x:X \to B$ is as shown on the right
\[ \xygraph{{\xybox{\xygraph{!{0;(1.5,0):(0,.6667)::} {X_2}="p0" [r] {X}="p1" [d] {B}="p2" [d] {D.}="p3" [l] {C}="p4" [u] {A}="p5" "p0":"p1"^-{x_3}:"p2"^-{x}:"p3"^-{k}:@{<-}"p4"^-{g}:@{<-}"p5"^-{h}:@{<-}"p0"^-{x_2} "p5":"p2"_-{f}:@{}"p0"|-{\tn{pb}}}}}
[r(3.5)]
{\xybox{\xygraph{!{0;(1.5,0):(0,.6667)::} {X_2}="p0" [r] {X}="p1" [d] {B}="p2" [d] {D}="p3" [l] {C}="p4" [u] {A}="p5" [r(.5)u(.2)] {X_3}="p6" "p0":"p1"^-{x_3}:"p2"^-{x}:"p3"^-{k}:@{<-}"p4"^-{g}:@{<-}"p5"^-{h}:@{<-}"p0"^-{x_2}"p6" (:@{<.}"p0"|-{\alpha_x},:"p1"|-{x_5},:"p4"|-{x_4},:@{}"p3"_-{\tn{pb}})}}}} \]
Denoting by $(k_2,\psi_2)$ the result of pulling back $(k,\psi)$ along $f$
\[ \xygraph{!{0;(.4,0):(0,1.25)::} {X_2}="ltl" [r(3)u] {Y_2}="ltr" [d(3)l] {A}="lb" [r(4)u(2)] {X}="rtl" [r(3)u] {Y}="rtr" [d(3)l] {B}="rb" "ltl":"rtl"^-{}:"rb"|-{}="d1"_-{x}:@{<-}^-{f}"lb":@{<-}"ltl"|-{}="d2"^-{x_2}
"ltr":"rtr"^-{}:"rb"|-{}="c1"^-{y}:@{<-}"lb":@{<-}"ltr"|-{}="c2"_-{y_2}|(.67)*=<3pt>{} "rtl":"rtr"^(.4){k} "ltl":"ltr"^(.4){k_2}
"d1":@{}"c1"|(.3){}="d3"|(.7){}="c3" "d3":@{=>}"c3"^-{\psi}
"d2":@{}"c2"|(.3){}="d4"|(.7){}="c4" "d4":@{=>}"c4"^-{\psi_2}} \]
as in Lemma \ref{lem:PTJ-fib}, the desired naturality comes down to the commutativity of the prism $(X_2,Y_2,Y_3,X_3,C,A)$ in
\[ \xygraph{!{0;(.4,0):(0,1.25)::} {X_3}="ltl" [r(3)u] {Y_3}="ltr" [d(3)l] {C}="lb" [r(4)u(2)] {X}="rtl" [r(3)u] {Y}="rtr" [d(3)l] {D}="rb" "ltl":"rtl"^(.6){x_5}:"rb"|-{}="d1"_-{kx}:@{<-}^-{g}"lb":@{<-}"ltl"|-{}="d2"^-{x_4}
"ltr":"rtr"^-{y_5}:"rb"|-{}="c1"^-{ky}:@{<-}"lb":@{<-}"ltr"|-{}="c2"_-{y_4}|(.67)*=<3pt>{} "rtl":"rtr"^(.4){k} "ltl":"ltr"^(.4){k_3}
"d1":@{}"c1"|(.3){}="d3"|(.7){}="c3" "d3":@{=>}"c3"^-{k\psi}
"d2":@{}"c2"|(.3){}="d4"|(.7){}="c4" "d4":@{=>}"c4"^-{\psi_3}
"lb" [l(6)] {A}="llb" "ltl" [l(6)] {X_2}="lltl" "ltr" [l(6)] {Y_2}="lltr"
"lltl" (:"llb"|-{}="d5",:"lltr"^(.4){k_2}:"llb"|(.33)*=<3pt>{}|-{}="c5")
"lltl":"ltl"^(.6){\alpha_x} "lltr":"ltr"^-{\alpha_y} "llb":"lb"_-{h}
"d5":@{}"c5"|(.3){}="d6"|(.7){}="c6" "d6":@{=>}"c6"^-{\psi_2}} \]
in which $(k_3,\psi_3)$ is the result of pulling back $(k,k\psi)$ along $g$. Since $g\psi_3\alpha_x = gh\psi_2$, and both $\psi_3\alpha_x$ and $h\psi_2$ are chosen $g$-opcartesian, it follows that $\psi_3\alpha_x = h\psi_2$. Taking codomains of this gives $y_4k_3\alpha_x = y_4\alpha_yk_2$, and $y_5k_3\alpha_x = y_5\beta_yk_2$, so $k_3\alpha_x = \alpha_yk_2$. The uniqueness part of opcartesianness can be used to verify 2-naturality.
\end{proof}
Given a commutative triangle as on the left
\begin{equation}\label{eq:right-part-poly-2-cell} 
\xygraph{{\xybox{\xygraph{{A}="p0" [r(2)] {B}="p1" [dl] {C}="p2" "p0":"p1"^-{h}:"p2"^-{g}:@{<-}"p0"^-{f}}}}
[r(3.5)d(.05)]
{\xybox{\xygraph{{\ca K/A}="p0" [r(2)] {\ca K/B}="p1" [dl] {\ca K/C}="p2" "p0":@{<-}"p1"^-{\Delta_h}:"p2"^-{\Sigma_g}:@{<-}"p0"^-{\Sigma_f} "p0" [d(.5)r(.85)] :@{=>}[r(.3)]^-{\kappa}}}}}
\end{equation}
one has a canonical 2-natural transformation as on the right obtained from $\Sigma_g\Sigma_h = \Sigma_f$ and $\Sigma_h \ladj \Delta_h$. When moreover $f$ and $g$ are split fibrations, since split fibrations compose, the 2-functors $\Sigma_f$ and $\Sigma_g$ lift to 2-functors
\[ \begin{array}{lccr}
{\overline{\Sigma}_f : \Algs {\Phi_{\ca K,A}} \longrightarrow \Algs {\Phi_{\ca K,C}}} &&&
{\overline{\Sigma}_g : \Algs {\Phi_{\ca K,B}} \longrightarrow \Algs {\Phi_{\ca K,C}}} \end{array} \]
respectively.
\begin{lem}\label{lem:right-part-poly-2-cell}
The 2-natural transformation (\ref{eq:right-part-poly-2-cell}) lifts to a 2-natural transformation
\[ \xygraph{!{0;(1.25,0):(0,.8)::} {\Algs {\Phi_{\ca K,A}}}="p0" [r(2)] {\Algs {\Phi_{\ca K,B}}}="p1" [dl] {\Algs {\Phi_{\ca K,C}}}="p2" "p0":@{<-}"p1"^-{\overline{\Delta}_h}:"p2"^-{\overline{\Sigma}_g}:@{<-}"p0"^-{\overline{\Sigma}_f} "p0" [d(.5)r(.85)] :@{=>}[r(.3)]^-{\overline{\kappa}}} \]
when $f$ and $g$ each have the structure of a split fibration.
\end{lem}
\begin{proof}
Let $p:E \to B$ be a split fibration. We have to show that the component $\kappa_p$ is a morphism of split fibrations, that is to say, that post-composition with $\kappa_p$ preserves chosen cartesian 2-cells. So we consider a chosen $fp_2$-cartesian 2-cell $\psi$ as in
\[ \xygraph{!{0;(2,0):(0,.5)::} {X}="p0" [r] {E_2}="p1" [r] {E}="p2" [d] {B}="p3" [l(.5)d] {C}="p4" [l(.5)u] {A}="p5"
"p0":@/^{1pc}/"p1"^-{x_1}|-{}="t" "p5":"p3"_-{h} "p0":@/_{1pc}/"p1"_-{x_2}|-{}="b":"p2"^-{\kappa_p}:"p3"^-{p}:"p4"^-{g}:@{<-}"p5"^-{f}:@{<-}"p1"^-{p_2} "t":@{}"b"|(.25){}="d"|(.75){}="c" "d":@{=>}"c"^-{\psi}
"p1":@{}"p3"|-{\tn{pb}}} \]
and we must show that $\kappa_p\psi$ is chosen $gp$-cartesian. By the way in which one describes the cleavage for a composite of split fibrations, to say that $\kappa_p\psi$ is chosen $gp$-cartesian is to say that $\kappa_p\psi$ is chosen $p$-cartesian and $p\kappa_p\psi$ is chosen $g$-cartesian; and to say that $\psi$ is chosen $fp_2$-cartesian is to say that $\psi$ is chosen $p_2$-cartesian and $p_2\psi$ is chosen $f$-cartesian. By the way in which one describes the cleavage of a morphism resulting from pulling back a split fibration, to say that $\psi$ is chosen $p_2$-cartesian is to say that $\kappa_p\psi$ is chosen $p$-cartesian. Since $h$ is a morphism of split fibrations and $p_2\psi$ is chosen $f$-cartesian, $hp_2\psi$ is chosen $g$-cartesian, and so the result follows.
\end{proof}
\begin{thm}\label{thm:fibrational-polynomials}
Let $\ca K$ be a 2-category with pullbacks and comma objects. Denote by $P:\ca K/I \to \ca K/J$ the polynomial 2-functor associated to the polynomial on the left
\[ \xygraph{{\xybox{\xygraph{{I}="p0" [r] {E}="p1" [r] {B}="p2" [r] {J}="p3" "p0":@{<-}"p1"^-{s}:"p2"^-{p}:"p3"^-{t}}}}
[r(5)]
{\xybox{\xygraph{!{0;(1.5,0):(0,.5)::} {I}="p0" [ur] {E_1}="p1" [r] {B_1}="p2" [dr] {J}="p3" [dl] {B_2}="p4" [l] {E_2}="p5" "p0":@{<-}"p1"^-{s_1}:"p2"^-{p_1}:"p3"^-{t_1}:@{<-}"p4"^-{t_2}:@{<-}"p5"^-{p_2}:"p0"^-{s_2}
"p1":"p5"_-{f_2} "p2":"p4"^-{f_1}
"p1":@{}"p4"|-{\tn{pb}} "p0" [r(.5)] {\scriptstyle{=}} "p3" [l(.5)] {\scriptstyle{=}}}}}} \]
and by $\phi:P \to Q$ the 2-natural transformation associated to the morphism of polynomials indicated on the right in the previous display. Then
\begin{enumerate}
\item If $p$ has the structure of a split opfibration and $t$ has the structure of a split fibration, then $P$ lifts to a 2-functor $\overline{P}:\Algs {\Phi_{\ca K,I}} \to \Algs {\Phi_{\ca K,J}}$ such that $U^{\Phi_{\ca K,J}}\overline{P} = PU^{\Phi_{\ca K,I}}$.\label{thmcase:fib-poly}
\item If in the context of (\ref{thmcase:fib-poly}) $I$ is discrete, then $P$ is a familial 2-functor.\label{thmcase:fam-poly}
\item If $(f_2,f_1):p_1 \to p_2$ is a morphism of split opfibrations and $(f_1,1_J):t_1 \to t_2$ is a morphism of split fibrations, then $\phi$ lifts to a 2-natural transformation $\overline{\phi}:\overline{P} \to \overline{Q}$ such that $U^{\Phi_{\ca K,J}}\overline{\phi} = \phi U^{\Phi_{\ca K,I}}$.\label{thmcase:morphism-fib-poly}
\item If in the context of (\ref{thmcase:morphism-fib-poly}) $I$ is discrete, then $\phi$ is a familial 2-natural transformation.\label{thmcase:morphism-fam-poly}
\end{enumerate}
\end{thm}
\begin{proof}
When $I$ is discrete $\Phi_{\ca K,I}$ is the identity 2-monad, and so (\ref{thmcase:fam-poly}) follows from (\ref{thmcase:fib-poly}) by the definition of familial 2-functor, and (\ref{thmcase:morphism-fam-poly}) follows from (\ref{thmcase:morphism-fib-poly}) by the definition of familial 2-natural transformation. To obtain (\ref{thmcase:fib-poly}), define $\overline {P} = \overline{\Sigma}_t\overline{\Pi}_p\overline{\Delta}_p$, where $\overline{\Pi}_p$ exists by Lemma \ref{lem:PTJ-fib} and the remarks immediately following that result, and $\overline{\Sigma}_t$ exists by the composability of split fibrations. To obtain (\ref{thmcase:morphism-fib-poly}), note that $\phi$ may be obtained as the composite
\[ \xygraph{!{0;(2,0):(0,.4)::} {\ca K/I}="p0" [ur] {\ca K/E_1}="p1" [r] {\ca K/B_1}="p2" [dr] {\ca K/J.}="p3" [dl] {\ca K/B_2}="p4" [l] {\ca K/E_2}="p5" "p0":"p1"^-{\Delta_{s_1}}:"p2"^-{\Pi_{p_1}}|(.4){}="t1":"p3"^-{\Sigma_{t_1}}|(.3){}="t2":@{<-}"p4"^-{\Sigma_{t_2}}|(.7){}="b2":@{<-}"p5"^-{\Pi_{p_2}}|(.6){}="b1":@{<-}"p0"^-{\Delta_{s_2}}
"p1":@{<-}"p5"_-{\Delta_{f_2}} "p2":@{<-}"p4"_-{\Delta_{f_1}}
"p0" [r(.5)] {\iso} "t1":@{}"b1"|(.4){}="d1"|(.6){}="c1" "d1":@{=>}"c1"^{\beta} "t2":@{}"b2"|(.35){}="d2"|(.65){}="c2" "d2":@{=>}"c2"^{\kappa}}  \]
The unnamed isomorphism is obtained by adjunction from the identity $\Sigma_{f_2}\Sigma_{s_1} = \Sigma_{s_2}$ which clearly extends to the level of lax slices, and so this isomorphism lifts to the level of split fibrations by the same argument as that given in the first paragraph of the proof of Lemma \ref{lem:Beck-Chev-lax-slices}. By Lemma \ref{lem:Beck-Chev-lax-slices} $\beta$ also lifts to the level of split fibrations, and by Lemma \ref{lem:right-part-poly-2-cell} $\kappa$ does too, and so (\ref{thmcase:morphism-fib-poly}) follows.
\end{proof}
\begin{rem}\label{rem:fib-poly}
In subsequent work we shall also use the dual of the above result, in which $\ca K$ is replaced by $\ca K^{\tn{co}}$. For instance the dual version of (\ref{thmcase:fam-poly}) says that if $I$ is discrete, $p$ has the structure of a split fibration and $t$ has the structure of a split opfibration, then $P$ is opfamilial. Note also that when $J$ is discrete, $t$, $t_1$ and $t_2$ are automatically split (op)fibrations in a unique way and $f_1$ is a morphism thereof.
\end{rem}

\subsection{Preservation of sifted colimits.}
\label{ssec:sifted-colim-preservation}
Two dimensional monad theory becomes a particularly powerful framework when applied to codescent object preserving 2-monads. For instance, as explained in \cite{Bourke-Thesis, Lack-Codescent}, a codescent object preserving 2-monad $T$ on a 2-category of the form $\Cat(\ca E)$ automatically satisfies the conditions of Power's general coherence theorem \cite{Lack-Codescent, Power-GeneralCoherenceResult}. Thus knowing $T$ preserves codescent objects implies a coherence theorem for $T$-algebras in this case. In particular one recovers the usual coherence theorems for monoidal, braided monoidal and symmetric monoidal categories in this way, without any combinatorial analysis. Moreover as discussed in \cite{Weber-CodescCrIntCat}, knowing that a 2-monad preserves codescent objects is one desirable condition leading to the ability to compute internal algebra classifiers involving $T$.

Codescent objects are particular instances of a class of 2-categorical colimits called sifted colimits, which include also reflexive coequalisers and filtered colimits. Recall that any weight $J:\ca C^{\op} \to \Cat$ (with $\ca C$ a small 2-category) determines, by virtue of the cocompleteness of $\Cat$ as a 2-category, a functor
\[ J * - : [\ca C,\Cat] \longrightarrow \Cat \]
given on objects by taking colimits in $\Cat$ weighted by $J$, and $J$ is a \emph{sifted weight} when $J * -$ preserves finite products. A \emph{sifted colimit} is a weighted colimit whose weight is sifted. In other words, sifted colimits are exactly those colimits which in $\Cat$ commute with finite products. Sufficient conditions on a polynomial in $\Cat$ so that its corresponding polynomial 2-functor preserves sifted colimits is provided by
\begin{thm}\label{thm:polynomials-sifted-colims}
The polynomial 2-functor associated to a polynomial
\[ \xygraph{{I}="p1" [r] {A}="p2" [r] {B}="p3" [r] {J}="p4" "p1":@{<-}"p2"^-{s}:"p3"^-{p}:"p4"^-{t}} \]
in $\Cat$ such that $I$ is discrete and $p$ is a discrete fibration or a discrete opfibration with finite fibres, preserves sifted colimits.
\end{thm}
\begin{proof}
It suffices to show that $\Pi_p\Delta_s$ preserves sifted colimits since $\Sigma_t$ as a left adjoint preserves all colimits. We consider the case where $p$ is a discrete fibration; the proof for the case where $p$ is a discrete opfibration is similar. By Theorem \ref{thm:fibrational-polynomials}(\ref{thmcase:fib-poly}) applied in the case $\ca K = \Cat^{\co}$, one has the commutative diagram
\[ \xygraph{!{0;(3,0):(0,.3333)::} {\Cat/I}="p0" [ur] {\Algs {\Psi_{\ca K,E}}}="p1" [r] {\Algs {\Psi_{\ca K,B}}}="p2" [d] {\Cat/B.}="p3" [l] {\Cat/E}="p4" "p0":"p1"^-{\overline{\Delta}_s}:"p2"^-{\overline{\Pi}_p}:"p3"^-{U^{\Psi_{\ca K,B}}}:@{<-}"p4"^-{\Pi_p}:@{<-}"p0"^-{\Delta_s} "p1":"p4"^-{U^{\Psi_{\ca K,E}}}} \]
By Remark \ref{rem:fibrations-as-coalgebras} $U^{\Psi_{\ca K,E}}$ and $U^{\Psi_{\ca K,B}}$ create all colimits, and so it suffices to show that $\overline{\Pi}_p$ preserves sifted colimits.

Since $\Algs {\Psi_{\ca K,E}}$ and $\Algs {\Psi_{\ca K,B}}$ are 2-equivalent to the functor 2-categories $[E,\Cat]$ and $[B,\Cat]$, and in these terms $\overline{\Delta}_p$ corresponds to the process of precomposition with $p$, by Lemma \ref{lem:PTJ-exp-fib}(\ref{lemcase:PTJ-exp-discrete}) one may identify $\overline{\Pi}_p$ with the process of right Kan extension along $p$. But since $p$ is a discrete fibration, such right Kan extensions are computed simply by taking products over the fibres of $p$. Since these products are finite, and limits and colimits in $[B,\Cat]$ are componentwise, the result follows by the definition of ``sifted colimit''.
\end{proof}
\begin{exams}\label{exs:finite-fam-pres-sifted-colimits}
By Theorem \ref{thm:polynomials-sifted-colims} the finite product completion and finite coproduct completion endofunctors of $\Cat$ described in Example \ref{ex:Fam-Ps-Mnd} preserve sifted colimits.
\end{exams}
\begin{exams}\label{exs:Batanin-Berger}
In part 3 of \cite{BataninBerger-HtyThyOfAlgOfPolyMnd} Batanin and Berger exhibit various flavours of ``operad'' as algebras of polynomial monads over $\Set$. The underlying endo-polynomial in all of their examples is of the form
\begin{equation}\label{eq:BatBer-polys}
\xygraph{{I}="p0" [r] {E}="p1" [r] {B}="p2" [r] {I}="p3"
"p0":@{<-}"p1"^-{s}:"p2"^-{p}:"p3"^-{t}}
\end{equation}
where $p$ is a function with finite fibres. The algebras of the associated monad on $\Set/I$ are the particular flavour of operad under consideration in $\Set$, however for the homotopical aspects of that work, it is also important to regard (\ref{eq:BatBer-polys}) as a componentwise discrete polynomial in $\Cat$. The corresponding 2-monad on $\Cat/I$ is familial and opfamilial by Theorem \ref{thm:fibrational-polynomials} and preserves sifted colimits by Theorem \ref{thm:polynomials-sifted-colims}.
\end{exams}

\section{Examples of polynomial 2-monads on $\Cat$}
\label{sec:SMonCat-2-monad}

In this section we exhibit the 2-monads
\begin{enumerate}
\item $\MCMnd$ for monoidal categories,
\item $\SMCMnd$ for symmetric monoidal categories,
\item $\BMCMnd$ for braided monoidal categories,
\item $\FCMnd$ for categories with finite coproducts, and
\item $\FPMnd$ for categories with finite products
\end{enumerate}
on $\Cat$, as polynomial 2-monads. These are all well-known cartesian 2-monads which were, for instance, basic examples for Kelly and his collaborators \cite{BWellKellyPower-2DMndThy, Kelly-ClubsDoctrines, KellyLack-PropertyLikeStructures}, in the establishment of 2-dimensional monad theory. We recall them as such in Section \ref{ssec:Cart-2-Monads}, and then in Section \ref{ssec:ex-endo-as-poly} exhibit their underlying endofunctors as polynomial. In Section \ref{ssec:bagdomains} we reexpress Johnstone's idea of bagdomain data, as a way of indirectly exhibiting the unit and multiplication of a polynomial monad. We apply this method in Section \ref{ssec:exhibiting-examples} to our examples. Many other examples are exhibited in \cite{Weber-OpPoly2Mnd}.

\subsection{The examples as cartesian 2-monads.}
\label{ssec:Cart-2-Monads}
Let $X$ be a category. Then the objects of $\MCMnd(X)$, $\SMCMnd(X)$, $\BMCMnd(X)$, $\FCMnd(X)$ and $\FPMnd(X)$ are the same, namely they are finite sequences $(x_1,...,x_n)$ of objects of $X$. Denoting $\underline{n}$ as the discrete category $\{1,...,n\}$ with $n$ objects, we regard a sequence $(x_1,...,x_n)$ as a functor $x : \underline{n} \to X$.

A morphism $x \to y$ in $\FCMnd(X)$ consists of a function $\phi$ and a natural transformation $\overline{\phi}$ as on the left in
\[ \xygraph{{\xybox{\xygraph{{\underline{m}}="p0" [r(2)] {{\underline{n}}}="p1" [dl] {X}="p2" "p0":"p1"^-{\phi}:"p2"^-{y}:@{<-}"p0"^-{x} "p0" [d(.5)r(.85)] :@{=>}[r(.3)]^-{\overline{\phi}}}}}
[r(4.5)]
{\xybox{\xygraph{!{0;(1,0):(0,1.5)::}
{x_1}="t1" [r] {x_2}="t2" [r] {x_3}="t3" [r] {x_4}="t4"
"t1" [r(.5)d] {y_1}="b1" [r] {y_2}="b2" [r] {y_3}="b3"
"t1"-"b2"|(.3)@{>}_(.3){\overline{\phi}_1}
"t2"-"b1"|(.35)@{>}^(.3){\overline{\phi}_2}
"t3"-"b2"|(.6)@{>}_(.6){\overline{\phi}_3}
"t4"-"b2"|(.5)@{>}^(.5){\overline{\phi}_4}}}}} \]
An alternative point of view is that such a morphism is a function ($\phi$) decorated by the morphisms of $X$ (the components of $\overline{\phi}$) as indicated in the example on the right. Since it is useful to be able to reason precisely from this latter point of view, we make a brief notational digression.

We denote by $\S$ the category whose objects are natural numbers, and whose morphisms $m \to n$ are functions $\underline{m} \to \underline{n}$. The category $\S$ is a skeleton of the category of finite sets and functions. We denote a sequence $(x_1,...,x_n)$ of objects of $X$ alternatively as $(x_k)_{1{\leq}k{\leq}n}$, or as $(x_k)_k$, as convenience dictates. The result of concatenating a sequence of sequences $((x_{k,l})_{1{\leq}l{\leq}n_k})_{1{\leq}k{\leq}m}$, is denoted $(x_{k,l})_{k,l}$. Implicit in this notation is the identification
\[ \{(k,l) \, : \, 1 \leq k \leq m, 1 \leq l \leq n_k\} = \{1,...,n_1 +...+ n_m\} \]
via the lexicographic ordering of the former. Given morphisms $\phi : m_1 \to m_2$ and $\phi_k : n_{1,k} \to n_{2,\phi(k)}$ in $\S$ for $1 \leq k \leq m_1$, we denote by $\phi(\phi_k)_k$ the morphism $\Sigma_k n_{1,k} \to \Sigma_k n_{2,k}$ given by $(\phi(\phi_k)_k)(k,l) = (\phi(k),\phi_k(l))$. In these terms a general morphism of $\FCMnd(X)$, is of the form
\[ (\phi,(\overline{\phi}_k)_{1{\leq}k{\leq}m}) : (x_k)_{1{\leq}k{\leq}m} \longrightarrow (y_k)_{1{\leq}k{\leq}n} \]
where $\phi : m \to n$ is in $\S$, and $\overline{\phi}_k : x_k \to y_{\phi k}$ is in $X$ for $1 \leq k \leq m$. We refer to the datum $\phi$ of such a morphism as the \emph{indexing function}.

The unit for $\FCMnd$ is given by the full inclusion of sequences of length $1$. On objects the components $\mu_X : \FCMnd^2(X) \to \FCMnd(X)$ are given by concatenation of sequences, and on morphisms by
\[ \mu_X(\phi,(\phi_k,(\overline{\phi}_{k,l})_l)_k) = (\phi(\phi_k)_k,(\overline{\phi}_{k,l})_{k,l}), \]
which in intuitive terms, is just substitution of decorated functions. With the notation provided it is straight forward to verify that $(\FCMnd,\eta,\mu)$ is a cartesian 2-monad on $\Cat$.

We have already considered a version of this 2-monad in Example \ref{ex:Fam-Ps-Mnd}. The difference is that here, we have taken a skeleton of the category of finite sets to index our families, and have more carefully book-kept the combinatorics. It is straight forward to check that $\FCMnd$ is lax idempotent{\footnotemark{\footnotetext{Or in older language, $\FCMnd$ is a Kock-Z\"oberlein 2-monad of colimit like variance.}}} and thus to exhibit the pseudo algebras of either version as being the same, namely, categories with finite coproducts. The virtue of the more pain-staking approach taken here, is that $\FCMnd$ is a 2-monad rather than just a pseudo monad.

One defines the 2-monad $\FPMnd$ as $\FPMnd(X) = \FCMnd(X^{\op})^{\op}$. In direct terms, a morphism of $\FPMnd(X)$ is of the form
\[ (\phi,(\overline{\phi}_k)_{1{\leq}k{\leq}m}) : (x_k)_{1{\leq}k{\leq}m} \longrightarrow (y_k)_{1{\leq}k{\leq}n} \]
where $\phi : n \to m$ is in $\S$, and $\overline{\phi}_k : x_{\phi k} \to y_k$ is in $X$ for $1 \leq k \leq n$. This is a colax idempotent cartesian 2-monad, and its pseudo algebras are categories with finite products.

The morphisms of the category $\SMCMnd(X)$ are defined to be those of $\FCMnd(X)$ (or equally well, of $\FPMnd(X)$) whose underlying indexing functions are bijections. It is clear that $\SMCMnd$ is a sub-2-monad of $\FCMnd$, and that the inclusion $\iota^S_C : \SMCMnd \hookrightarrow \FCMnd$ is a cartesian monad morphism. It follows that $\SMCMnd$ is also a cartesian 2-monad. It is straight forward to verify directly that pseudo $\SMCMnd$-algebras are exactly unbiased{\footnotemark{\footnotetext{This means that an $n$-ary tensor product is defined for all $n$, and invertible coherence maps are given. The coherence theorems one has available enable one to identify these with symmetric monoidal categories defined in the usual biased way, with a unit and binary product, and so we regard pseudo $\SMCMnd$-algebras as symmetric monoidal categories, disregarding the biased/unbiased distinction.}}} symmetric monoidal categories, and strict algebras are exactly symmetric strict monoidal categories. Moreover the various notions of $\SMCMnd$-morphism -- lax, colax, pseudo and strict -- correspond to symmetric lax, colax, strong and strict monoidal functors respectively. Similarly one defines the cartesian 2-monad $\MCMnd$ for monoidal categories, by defining the morphisms of $\MCMnd(X)$ as those of $\FCMnd(X)$ whose indexing functions are identities.

As explained in \cite{JoyalStreet-GeometryTensorCalculus} the 2-monad $\BMCMnd$ for braided monoidal categories, there denoted $\mathbb{B} \wr (-)$, is given similarly as for $\SMCMnd$, except that the indexing bijections are replaced by indexing braids, in the definition of the morphisms of $\BMCMnd(X)$. The process of taking the underlying permutation of a braid, gives a cartesian monad morphism $\pi : \BMCMnd \to \SMCMnd$, and so $\BMCMnd$ is also a cartesian 2-monad. Once again the different types of algebras and algebra morphisms of $\BMCMnd$ reconcile in the expected way with braided (strict) monoidal categories and braided (lax, colax, strong or strict) monoidal functors. One also has a cartesian monad morphism $\iota^M_B : \MCMnd \to \BMCMnd$, whose components can be regarded as the identity on objects inclusion which regards morphisms of $\MCMnd(X)$ as identity braids whose strings are decorated by the morphisms of $X$. The composite $\pi\iota^M_B$ is denoted $\iota^M_S$, and its components regards morphisms of $\MCMnd(X)$ as identity permutations whose strings are decorated by the morphisms of $X$.

To summarise, one has a diagram of cartesian 2-monads and cartesian monad morphisms as on the left
\begin{equation}\label{eq:examples-diagram}
\xygraph{*{\xybox{\xygraph{!{0;(1,0):(0,.5)::} {\MCMnd}="p0" [r] {\BMCMnd}="p1" [r] {\SMCMnd}="p2" [ur] {\FCMnd}="p3" [d(2)] {\FPMnd}="p4" "p0":"p1"_-{\iota^M_B}:"p2"_-{\pi}(:"p3"^-{\iota^S_C},:"p4"_-{\iota^S_P})
"p0":@/^{1.5pc}/"p2"^-{\iota^M_S}}}}
[r(5.5)]
*!(0,.1){\xybox{\xygraph{!{0;(1,0):(0,.5)::} {\N}="p0" [r] {\B}="p1" [r] {\P}="p2" [ur] {\S}="p3" [d(2)] {\S^{\op}}="p4" "p0":"p1"^-{}:"p2"^-{}(:"p3"^-{},:"p4"^-{}) "p0":@/^{1.5pc}/"p2"^-{}}}}}
\end{equation}
and the result of evaluating at $1$ is denoted as on the right. Each monad morphism here is componentwise an identity on objects, and so in particular each functor on the right is the identity on objects. On the right one has $\MCMnd(1) = \N$ the natural numbers, $\BMCMnd(1) = \B$ the braid category, $\SMCMnd(1) = \P$ the permutation category, and so on. Recall, the non-empty hom-sets of $\P$ are $\P(n,n) = \Sigma_n$, where $\Sigma_n$ is the group of permutations of $n$ elements, and similarly, $\B(n,n)$ is $\tn{Br}_n$, the $n$-th braid group. In this section we shall establish that the left diagram of (\ref{eq:examples-diagram}) is a diagram of polynomial 2-monads and morphisms thereof.

\subsection{Underlying endofunctors as polynomial functors.}
\label{ssec:ex-endo-as-poly}
We denote by $\S_*$ the coslice $1/\S$, whose objects may be regarded as pairs $(i,n)$ where $n \in \N$ and $1 \leq i \leq n$, with $i$ regarded as a ``chosen basepoint''. A morphism $(i,m) \to (j,n)$ of $\S_*$ is thus a function $f : \underline{m} \to \underline{n}$ such that $fi = j$. By definition, $\S_*$ comes with a forgetful functor $U^{\S} : \S_* \to \S$, and $U^{\S}$ is a discrete opfibration with finite fibres. The fibre over $n \in \S$ may be identified with the set $\underline{n} = \{1,...,n\}$. Since $U^{\S}$ is a discrete opfibration, and $(U^{\S})^{\op}$ is a discrete fibration, $U^{\S}$ and $(U^{\S})^{\op}$ are exponentiable functors.
\begin{lem}\label{lem:FCMnd-FPMnd-polyendos}
The underlying endofunctors of $\FCMnd$ and $\FPMnd$ are the result of applying $\PFun{\Cat}$ to
\[ \xygraph{*{\xybox{\xygraph{{1}="p0" [r] {\S_*}="p1" [r] {\S}="p2" [r] {1}="p3" "p0":@{<-}"p1"^-{}:"p2"^-{U^{\S}}:"p3"^-{}}}}
[r(4.5)]
*{\xybox{\xygraph{{1}="p0" [r] {\S_*^{\op}}="p1" [r(1.25)] {\S^{\op}}="p2" [r] {1}="p3" "p0":@{<-}"p1"^-{}:"p2"^-{(U^{\S})^{\op}}:"p3"^-{}}}}} \]
respectively.
\end{lem}
\begin{proof}
We give the proof for $\FCMnd$, the case of $\FPMnd$ follows similarly. Provisionally we write $P$ for the 2-functor corresponding to the polynomial on the left in the above display. By definition one has
\begin{equation}\label{eq:obj-map-Cfin} 
\xygraph{!{0;(1.75,0):(0,.5)::}
{1}="p1" [r] {\S_*}="p2" [r] {\S}="p3" [r] {1}="p4"
"p1" [u] {X}="s" "p2" [u] {X \times \S_*}="mdpb" [u] {P_*X}="tldpb" [r] {PX}="trdpb"
"p1":@{<-}"p2"_-{}:"p3"_-{U^{\S}}:"p4"_-{} "mdpb"(:"s"_-{}:"p1"_-{},:"p2"^-{p_X},:@{<-}"tldpb"_-{}:"trdpb"^-{}(:"p3"^-{q_X},:"p4"^-{}|-{}="codeq"))
"mdpb" ([r(.5)] {\scriptstyle{\tn{dpb}}}, [l(.5)d(.5)] {\scriptstyle{\tn{pb}}})}
\end{equation}
and we now proceed to identify $PX$ with $\FCMnd(X)$. An object of $PX$ may be regarded as a functor $h:[0] \to PX$, and thus also as a pair $n:[0] \to \S$ together with $h:[0] \to PX$ such that $qh = n$. By $\Delta_{U^{\S}} \ladj \Pi_{U^{\S}}$ and since $q_X = \Pi_{U^{\S}}(p_X)$, such an $h$ is in bijection with $k : (U^{\S})^{-1}\{n\} \to X \times \S_*$ over $\S_*$, but this in turn is just an $n$-tuple $(x_1,...,x_n)$ of objects of $X$. Thus an object of $PX$ is a pair $(n,x)$ where $n \in \N$ and $x : \underline{n} \to X$. Similarly regarding arrows of $PX$ as functors $[1] \to PX$ and using the adjointness $\Delta_{U^{\S}} \ladj \Pi_{U^{\S}}$ in the same way, one finds that a morphism $(m,x) \to (n,y)$ of $PX$ consists of $\phi : \underline{m} \to \underline{n}$ in $\S$ together with $\overline{\phi}$ as in
\[ \xygraph{{\underline{m}}="p0" [r(2)] {\underline{n}}="p1" [dl] {X}="p2" "p0":"p1"^-{\phi}:"p2"^-{y}:@{<-}"p0"^-{x} "p0" [d(.5)r(.85)] :@{=>}[r(.3)]^-{\overline{\phi}}} \]
and so $P$ and $\FCMnd$ agree on objects. Note that $P_*X$ also has an easy explicit description. Namely, an object is a triple $(i,n,x)$ where $(n,x) \in PX$ and $1 \leq i \leq n$, and a morphism $(i,m,x) \to (j,n,y)$ consists of $\phi$ and $\overline{\phi}$ as above making $(\phi,\overline{\phi}) : (m,x) \to (n,y)$ a morphism of $PX$, such that $\phi i = j$. Moreover, all the functors participating in (\ref{eq:obj-map-Cfin}) also admit straight forward direct descriptions in these terms.

We must now verify that $Pf = \FCMnd(f)$ for $f : X \to Y$. Note that $Pf$ is induced from $f$ by the pullbacks and distributivity pullbacks that go into defining $PX$ and $PY$, whereas $\FCMnd(f)$ is the functor described by composing with $f$, that is on objects one has $\FCMnd(f)(n,x) = (n,fx)$. Let us define ${\FCMnd}_*(f) : P_*X \to P_*Y$ to be the functor given on objects by $(i,n,x) \mapsto (i,n,fy)$. By the uniqueness aspects of the universal properties of the pullbacks and distributivity pullbacks involved in defining  $PY$, it suffices to show that
\[ \xygraph{!{0;(1.5,0):(0,.5)::} {X \times \S_*}="p0" [r] {P_*X}="p1" [r] {PX}="p2" [dr] {\S}="p3" [dl] {PY}="p4" [l] {P_*Y}="p5" [l] {Y \times \S_*}="p6" "p0":@{<-}"p1"^-{}:"p2"^-{}:"p3"^-{q_X}:@{<-}"p4"^-{q_Y}:@{<-}"p5"^-{}:"p6"^-{}:@{<-}"p0"^-{f \times 1_{\S_*}} "p1":"p5"_-{{\FCMnd}_*(f)} "p2":"p4"_-{\FCMnd(f)}} \]
commutes. Given that everything in this diagram has been made so explicit, this is a straight forward calculation.
\end{proof}
The functor $(\iota^S_C)_1 : \P \to \S$ is the inclusion of the maximal subgroupoid of $\S$, and similarly $(\iota^S_P)_1$ is also a maximal subgroupoid inclusion. We define $\P_*$ as the maximal subgroupoid of $\S_*$. It comes with a forgetful functor $U^{\P} : \P_* \to \P$. Equivalently, $\P_* = 1/\P$. The functor $U^{\P}$ fits into pullback squares in
\[ \xygraph{*{\xybox{\xygraph{!{0;(1.25,0):(0,1)::}
{\P_*}="p0" [r] {\P}="p1" [d] {\S}="p2" [l] {\S_*}="p3" "p0":"p1"^-{U^{\P}}:"p2"^-{(\iota^S_C)_1}:@{<-}"p3"^-{U^{\S}}:@{<-}"p0"^-{}:@{}"p2"|-{\tn{pb}} "p0" [d(.5)l] {1}="l" "p1" [d(.5)r] {1}="r" "l"(:@{<-}"p0",:@{<-}"p3") "r"(:@{<-}"p1",:@{<-}"p2")}}}
[r(5.5)]
*!(0,.02){\xybox{\xygraph{!{0;(1.25,0):(0,1)::} {\P_*}="p0" [r] {\P}="p1" [d] {\S^{\op}}="p2" [l] {\S_*^{\op}}="p3" "p0":"p1"^-{U^{\P}}:"p2"^-{(\iota^S_P)_1}:@{<-}"p3"^-{(U^{\S})^{\op}}:@{<-}"p0"^-{}:@{}"p2"|-{\tn{pb}} "p0" [d(.5)l] {1}="l" "p1" [d(.5)r] {1}="r" "l"(:@{<-}"p0",:@{<-}"p3") "r"(:@{<-}"p1",:@{<-}"p2")}}}} \]
By Propositions \ref{prop:homfunctors-bi-fibs} and \ref{prop:poly-along-cart-2cat-case} the underlying endofunctor of $\SMCMnd$ is a polynomial 2-functor, and one may regard these morphisms of polynomials as corresponding to the 2-natural transformations $\iota^S_C$ and $\iota^S_P$. Similarly by pulling back along $\pi_1$ and $(\iota^M_S)_1$, one defines $U^{\B} : \B_* \to \B$ and $U^{\N} : \N_* \to \N$, exhibits $\tnb{B}$ and $\tnb{M}$ as polynomial 2-functors, and $\pi$, $\iota^M_S$ and $\iota^M_B$ as arising from morphisms of polynomials. To summarise, we have
\begin{cor}\label{cor:examples-as-polyendos}
At the level of the underlying endofunctors, the left diagram of (\ref{eq:examples-diagram}) is in the essential image of $\PFun{\Cat}$.
\end{cor}
To establish that (\ref{eq:examples-diagram}) is a diagram of polynomial 2-\emph{monads} and morphisms thereof, one way to proceed would be to write down the polynomial monad structures for $\FCMnd$ and $\FPMnd$ explicitly, and then use Proposition \ref{prop:poly-along-cart-2cat-case}. To avoid computations involving composites of such polynomials, for instance when verifying monad axioms, we use an alternative approach. This approach is described in general in the next section, and applied to our examples in Section \ref{ssec:exhibiting-examples}.

\subsection{Bagdomain data as polynomial monad structures.}
\label{ssec:bagdomains}
The construction of the finite coproduct and finite product completion were seen to underlie polynomial \emph{pseudo} monads in $\Cat$ in Example \ref{ex:Fam-Ps-Mnd}. However as explained in Section \ref{ssec:Cart-2-Monads} these constructions underlie 2-monads, and a framework for these is described in this section. It is little more than an explicit encoding of some of the developments of Section 2 of \cite{Johnstone-VariationsBagdomain}, in the language of polynomials.

Recall that when a category $\ca E$ has pullbacks the codomain functor $\tn{cod} : \ca E^{[1]} \to \ca E$ is a fibration, and that a morphism of $\ca E^{[1]}$ is $\tn{cod}$-cartesian if and only if its underlying square in $\ca E$ is a pullback square.
\begin{defn}\label{def:p-fibration}
Let $\ca E$ be a category with pullbacks and $p:E \to B$ be a morphism therein. Then a \emph{$p$-fibration} is a $\tn{cod}$-cartesian arrow into $p$, and a \emph{morphism of $p$-fibrations} is a morphism in $\ca E^{[1]} \downarrow p$ between $p$-fibrations.
\end{defn}
As such a $p$-fibration consists of an arrow $f:X \to Y$ of $\ca E$ together with $(u,v)$ fitting into a pullback square
\[ \xygraph{{\xybox{\xygraph{{X}="p0" [r] {Y}="p1" [d] {B}="p2" [l] {E}="p3" "p0":"p1"^-{f}:"p2"^-{v}:@{<-}"p3"^-{p}:@{<-}"p0"^-{u}:@{}"p2"|-{\tn{pb}}}}}
[r(3)] {\xybox{\xygraph{{X_1}="p0" [r] {Y_1}="p1" [d] {Y_2}="p2" [l] {X_2}="p3" "p0":"p1"^-{f_1}:"p2"^-{v_3}:@{<-}"p3"^-{f_2}:@{<-}"p0"^-{u_3}:@{}"p2"|-{\tn{pb}}}}}} \]
as on the left. In this context we say that $(u,v)$ is a \emph{$p$-fibration structure} on $f$. For a morphism of $p$-fibrations $(f_1,u_1,v_1) \to (f_2,u_2,v_2)$ one has morphisms $u_3$ and $v_3$ as on the right in the previous display such that $u_3u_2 = u_1$ and $v_3v_2 = v_1$. Clearly, $u_3$ is uniquely determined by $v_3$ and the universal property of the pullback containing $(f_2,u_2,v_2)$, and so in minimalistic terms, $(u_3,v_3)$ amounts to $v_3:Y_1 \to Y_2$ over $B$.
\begin{exam}\label{ex:p-for-monoid-monad}
Let $\ca E = \Set$ and regard $U^{\N} : \N_* \to \N \, \in \, \Set$. Explicitly, $\N_* = \{(i,j) \, : \, i \in \N, 1 \leq j \leq i\}$
and $U^{\N}(i,j) = i$. Observe that for $n \in \N$, $|(U^{\N})^{-1}\{n\}| = n$. To give a function $f:X \to Y$ the structure of a $U^{\N}$-fibration is by definition to give functions $u$ and $v$ fitting into a pullback square
\[ \xygraph{{X}="p0" [r] {Y}="p1" [d] {\N}="p2" [l] {\N_*}="p3" "p0":"p1"^-{f}:"p2"^-{v}:@{<-}"p3"^-{U^{\N}}:@{<-}"p0"^-{v}:@{}"p2"|-{\tn{pb}}} \]
and so $f$ must have finite fibres since $U^{\N}$ does. Moreover for $y \in Y$ such that $f^{-1}\{y\} = n$, $u$ restricts to a bijection $u_y$ between $f^{-1}\{y\}$ and $(U^{\N})^{-1}\{n\}$, and so amounts to a linear order on $f^{-1}\{y\}$. Conversely given a function $f:X \to Y$ with finite fibres and a linear order on each fibre, one determines $v$ by the formula $vy = |f^{-1}\{y\}|$, and then $u$ by taking $ux = (n,i)$, where $n = |f^{-1}\{fx\}|$ and $i$ is $x$'s position in the linear order on $f^{-1}\{fx\}$. Thus a $U^{\N}$-fibration is a function with finite fibres together with a linear order on each fibre.
\end{exam}
Recall that a \emph{double category} (resp. \emph{double functor}) is a category (resp. functor) internal to $\CAT$. Given a category $\ca E$ we denote by $\ca D(\ca E)$ the double category as on the left
\[ \xygraph{*!(0,.75)-=(4.5,1){\xybox{\xygraph{!{0;(2,0):(0,1)::} {\ca E^{[2]}}="p0" [r] {\ca E^{[1]}}="p1" [r] {\ca E}="p2"
"p2":"p1"|-{\id_{\ca E}} "p1":@<1.5ex>"p2"^-{\tn{dom}_{\ca E}} "p1":@<-1.5ex>"p2"_-{\tn{cod}_{\ca E}} "p0":@<1.5ex>"p1"^-{} "p0":"p1"|-{\tn{comp}_{\ca E}} "p0":@<-1.5ex>"p1"_-{}}}}
[r(4.5)]
*-=(1.5,1){\xybox{\xygraph{{X_1}="p0" [r] {X_0}="p1" "p0":@<1ex>"p1"^-{d_1} "p0":@<-1ex>"p1"_-{d_0}}}}} \]
whose objects are those of $\ca E$, vertical arrows and horizontal arrows are morphisms of $\ca E$, and squares are commutative squares in $\ca E$. Any double category $X$ has, by forgetting horizontal identities and compositions, an underlying graph internal to $\CAT$, consisting of the source and target functors as on the right in the previous display. Our conventions are that $X_0$ is the category of objects and vertical arrows, and $X_1$ is the category of horizontal arrows and squares between them.

In the context of Definition \ref{def:p-fibration} we define
\[ \ca U_p : \ca D(p) \longrightarrow \ca D(\ca E) \]
a graph morphism internal to $\CAT$, as follows. The category $\ca D(p)_0$ of objects and vertical arrows of $\ca D(p)$ is $\ca E$, and $(\ca U_p)_0 = 1_{\ca E}$. The category $\ca D(p)_1$ of horizontal arrows and squares of $\ca D(p)$, is the category of $p$-fibrations and their morphisms. Every $p$-fibration has an underlying arrow of $\ca E$, and every morphism of $p$-fibrations has an underlying square, the assignations of which provide $(\ca U_p)_1$.
\begin{thm}\label{thm:bagdomain-data}
Let $\ca E$ be a category with finite limits and $p:E \to B$ an exponentiable morphism therein. There is a bijection between the following types of data:
\begin{enumerate}
\item Unit and multiplication 2-cells in $\Polyc {\ca E}$ making
\[ \xygraph{{1}="p0" [r] {E}="p1" [r] {B}="p2" [r] {1}="p3" "p0":@{<-}"p1"^-{}:"p2"^-{p}:"p3"^-{}} \]
the underlying endoarrow of a monad in $\Polyc {\ca E}$.\label{thmcase:bagdomain-1}
\item Double category structures on $\ca D(p)$ making $\ca U_p$ a double functor.
\end{enumerate}
\end{thm}
\begin{proof}
We denote by $P:1 \to 1$ the endoarrow of (\ref{thmcase:bagdomain-1}). To give the data of a 2-cell $u:1_1 \to P$ in $\Polyc {\ca E}$ is to give $(u_1,u_2)$ fitting into a pullback square
\[ \xygraph{{\xybox{\xygraph{{1}="p0" [r] {1}="p1" [d] {B}="p2" [l] {E}="p3" "p0":"p1"^-{}:"p2"^-{u_1}:@{<-}"p3"^-{p}:@{<-}"p0"^-{u_2}:@{}"p2"|-{\tn{pb}}}}}
[r(2.5)u(.125)]
{\xybox{\xygraph{{X}="p0" [r] {X}="p1" [d] {1}="p2" [l] {1}="p3" "p0":"p1"^-{1_X}:"p2"^-{}:@{<-}"p3"^-{}:@{<-}"p0"^-{}:@{}"p2"|-{\tn{pb}}}}}
[r(2.5)d(.07)]
{\xybox{\xygraph{{X}="p0" [r] {X}="p1" [d] {Y}="p2" [l] {Y}="p3" "p0":"p1"^-{1_X}:"p2"^-{f}:@{<-}"p3"^-{1_Y}:@{<-}"p0"^-{f}:@{}"p2"|-{\tn{pb}}}}}} \]
as on the left. Given this data any identity arrow $1_X:X \to X$ acquires the structure of a $p$-fibration, namely that which makes the square on the middle in the previous display a morphism $1_X \to 1_1$ of $p$-fibrations. Given a morphism $f:X \to Y$ of $\ca E$, with respect to these $p$-fibration structures on identity arrows, the square on the right in the previous display is clearly a morphism of $p$-fibrations. Thus one has a functor $\id_p:\ca E \to \ca D(p)_1$ such that $(\ca U_p)_1\id_p = \id_{\ca E}$. Conversely, such a functor amounts to assigning a $p$-fibration structure to each identity map in such a way that for all $f:X \to Y$ the right square above is a morphism of $p$-fibrations. Thus in particular there is a $p$-fibration structure on $1_1$, which amounts to $(u_1,u_2)$ as on the left in the previous display. The processes just described are easily verified to exhibit a bijection between 2-cells $u:1_1 \to P$ of $\Polyc {\ca E}$ and functors $\id_p$ as above.

To give a functor $\tn{comp}_p:\ca D(p)_1 \times_{\ca E} \ca D(p)_1 \to \ca D(p)_1$ such that $(\ca U_p)_1\tn{comp}_p = \tn{comp}_{\ca E}(\ca U_p)_2$, where $(\ca U_p)_2$ is induced in the evident way using $(\ca U_p)_1$, is to give, for each composable pair $f:X \to Y$, $g:Y \to Z$ in $\ca E$ together with $p$-fibration structures on $f$ and $g$, a $p$-fibration structure on the composite $gf$, and that this assignation be functorial. This functoriality means that given morphisms of $p$-fibrations $(u,v):f \to f'$ and $(v,w):g \to g'$ as in
\[ \xygraph{{X}="p0" [r] {Y}="p1" [r] {Z}="p2" [d] {Z'}="p3" [l] {Y'}="p4" [l] {X'}="p5" "p0":"p1"^-{f}:"p2"^-{g}:"p3"^-{w}:@{<-}"p4"^-{g'}:@{<-}"p5"^-{f'}:@{<-}"p0"^-{u} "p1":"p4"^{v}} \]
the composite square underlies a $p$-fibration morphism $(u,w):gf \to g'f'$.

Suppose that a 2-cell $m:P \comp P \to P$ in $\Polyc {\ca E}$ is given. Then given also a composable pair $f:X \to Y$, $g:Y \to Z$ in $\ca E$ together with $p$-fibration structures on $f$ and $g$, one has
\[ \xygraph{{1}="b1" [r] {E}="b2" [r] {B}="b3" [r] {1}="b4" [r] {E}="b5" [r] {B}="b6" [r] {1}="b7" "b4" [u] {B \times E}="p1" [u] {F}="dl" ([r(1.5)] {B^{(2)}}="dr", [l(1.5)] {E^{(2)}}="p2")
"b2" [r(.5)d] {X}="t1" "b4" [d] {Y}="t2" "b5" [r(.5)d] {Z}="t3"
"b1":@{<-}"b2"_-{}:"b3"_-{p}:"b4"_-{}:@{<-}"b5"_-{}:"b6"_-{p}:"b7"_-{} "dl":"p1"_-{}(:"b3"_-{},:"b5"^-{\pi_E}) "b2":@{<-}"p2"_-{}:"dl"_-{}:"dr"_-{}:"b6"^(.7){} "b1":@{<-}"p2"^-{} "dr":"b7"^-{}
"t1":"t2"_-{f}:"t3"_-{g} "t1":"b2" "t2" (:"b3",:"b5") "t3":"b6"
"b3" [u(1.25)] {\scriptstyle{\tn{pb}}} "b5" [u(1.25)] {\scriptstyle{\tn{dpb}}} "b4" [u(.5)] {\scriptstyle{\tn{pb}}} "t2" ([u(.45)l(1.1)] {\scriptstyle{\tn{pb}}}, [u(.45)r(1.1)] {\scriptstyle{\tn{pb}}})
"t2":@/_{1pc}/@{.>}"p1" "t2":@/^{1pc}/@{.>}"dl" "t3":@{.>}"dr" "t1":@{.>}"p2"} \]
in which the lower pullback squares witness the $p$-fibration structures on $f$ and $g$. By the universal property of $B \times E$ one induces the unique arrow $Y \to B \times E$ commuting with the morphisms into $B$ and $E$. Thus the bottom right pullback is around $(p,\pi_E)$, and so by the distributivity pullback one induces the morphisms $Y \to F$ and $Z \to B^{(2)}$. Finally using the top right pullback one induces $X \to E^{(2)}$. The squares $(X,Y,F,E^{(2)})$ and $(Y,Z,B^{(2)},F)$ so arising are pullbacks, and so the composite pullback
\[ \xygraph{!{0;(1.5,0):(0,.6667)::} 
{X}="p0" [r] {Y}="p1" [r] {Z}="p2" [d] {B^{(2)}}="p3" [l] {F}="p4" [l] {E^{(2)}}="p5" "p0":"p1"^-{f}:"p2"^-{g}:"p3"^-{}:@{<-}"p4"^-{}:@{<-}"p5"^-{}:@{<-}"p0"^-{} "p1":"p4"^{}
"p0":@{}"p4"|-{\tn{pb}} "p1":@{}"p3"|-{\tn{pb}}
"p5" [d] {E}="p6" [r(2)] {B}="p7"
"p3":"p7"^{m_1}:@{<-}"p6"^-{p}:@{<-}"p5"^{m_2}
"p5":@{}"p7"|-{\tn{pb}}} \]
exhibits a $p$-fibration structure on $gf$, and so we have described the object map of the functor $\tn{comp}_p$.

The composite $p^{(2)}:E^{(2)} \to B^{(2)}$ is the middle map of the composite polynomial $P \comp P$. When $f$ and $g$ are the morphisms $E^{(2)} \to F$ and $F \to B^{(2)}$ respectively, the top row of vertical arrows in the previous display are identities, and so one has a $p$-fibration structure on $p^{(2)}$. Moreover for general $f$ and $g$, the composite of the top pullbacks of the previous display exhibits a morphism $gf \to p^{(2)}$ of $p$-fibrations. From
\[ \xygraph{{1}="b1" [r] {E}="b2" [r] {B}="b3" [r] {1}="b4" [r] {E}="b5" [r] {B}="b6" [r] {1}="b7" "b4" [u] {B \times E}="p1" [u] {F}="dl" ([r(1.5)] {B^{(2)}}="dr", [l(1.5)] {E^{(2)}}="p2")
"b2" [r(.5)d] {X'}="t1" "b4" [d] {Y'}="t2" "b5" [r(.5)d] {Z'}="t3"
"b1":@{<-}"b2"_-{}:"b3"_-{p}:"b4"_-{}:@{<-}"b5"_-{}:"b6"_-{p}:"b7"_-{} "dl":"p1"_-{}(:"b3"_-{},:"b5"^-{\pi_E}) "b2":@{<-}"p2"_-{}:"dl"_-{}:"dr"_-{}:"b6"^(.7){} "b1":@{<-}"p2"^-{} "dr":"b7"^-{}
"t1":"t2"^-{f'}:"t3"^-{g'} "t1":"b2" "t2" (:"b3",:"b5") "t3":"b6"
"b3" [u(1.25)] {\scriptstyle{\tn{pb}}} "b5" [u(1.25)] {\scriptstyle{\tn{dpb}}} "b4" [u(.5)] {\scriptstyle{\tn{pb}}} "t2" ([u(.45)l(1.1)] {\scriptstyle{\tn{pb}}}, [u(.45)r(1.1)] {\scriptstyle{\tn{pb}}})
"t2":@/_{1pc}/@{.>}"p1" "t2":@/^{1pc}/@{.>}"dl" "t3":@{.>}"dr" "t1":@{.>}"p2"
"t1" [d] {X}="t4" "t2" [d] {Y}="t5" "t3" [d] {Z}="t6"
"t4"(:"t5"_-{f}(:"t6"_-{g}:"t3"_{w},:"t2"_{v}),:"t1"^{u})
"t1":@{}"t5"|-{\tn{pb}} "t2":@{}"t6"|-{\tn{pb}}} \]
in which $(u,v)$ and $(v,w)$ are morphisms of $p$-fibrations, $(u,w)$ is a morphism $gf \to g'f'$ of $p$-fibrations, thus giving the arrow map for $\tn{comp}_p$. In this way from a 2-cell $m:P \comp P \to P$ in $\Polyc {\ca E}$, one obtains the functor $\tn{comp}_p$ such that $(\ca U_p)_1\tn{comp}_p = \tn{comp}_{\ca E}(\ca U_p)_2$.

Conversely given $\tn{comp}_p$, the pullbacks appearing in the formation of $P \comp P$ exhibit $p$-fibration structures on $E^{(2)} \to F$ and $F \to B^{(2)}$, and applying $\tn{comp}_p$ to this composable pair gives a $p$-fibration structure to $p^{(2)}$, which amounts to the components $(m_1,m_2)$ of a 2-cell $m:P \comp P \to P$. It is straight forward to verify that the processes described here exhibit a bijection between such 2-cells and such functors $\tn{comp}_p$. The straight forward verification that the unit and associative laws for $u$ and $m$ correspond with the unit and associative laws for horizontal composition in the double category $\ca D(p)$ is left to the reader.
\end{proof}
It was the data of Theorem \ref{thm:bagdomain-data}(\ref{thmcase:bagdomain-1}) that Johnstone named \emph{bagdomain data} in Definition 2.1 of \cite{Johnstone-VariationsBagdomain}.
\begin{rem}\label{rem:bicategories-structured-polynomials}
In the context of Theorem \ref{thm:bagdomain-data}, one can consider polynomials as on the left
\[ \xygraph{{\xybox{\xygraph{{I}="p0" [r] {X}="p1" [r] {Y}="p2" [r] {J}="p3" "p0":@{<-}"p1"^-{s}:"p2"^-{f}:"p3"^-{t}}}}
[r(5)d(.075)]
{\xybox{\xygraph{!{0;(1.5,0):(0,.5)::} {I}="p0" [ur] {X_1}="p1" [r] {Y_1}="p2" [dr] {J}="p3" [dl] {Y_2}="p4" [l] {X_2}="p5" "p0":@{<-}"p1"^-{s_1}:"p2"^-{f_1}:"p3"^-{t_1}:@{<-}"p4"^-{t_2}:@{<-}"p5"^-{f_2}:"p0"^-{s_2}
"p1":"p5"_-{u} "p2":"p4"^-{v}
"p1":@{}"p4"|-{\tn{pb}} "p0" [r(.5)] {\scriptstyle{=}} "p3" [l(.5)] {\scriptstyle{=}}}}}} \]
in which the middle map $f$ has the structure of a $p$-fibration, morphisms thereof as on the right in which the middle pullback square is a morphism of $p$-fibrations. We call such a polynomial a \emph{$p$-structured polynomial} from $I$ to $J$. By definition the process of pulling back in $\ca E$ carries along $p$-fibration structures, and by Theorem \ref{thm:bagdomain-data} one has a composition of $p$-fibrations. Thus composition in $\Polyc {\ca E}$ can be extended to a composition of $p$-structured polynomials. In this way one has a bicategory $\Polyc p$, whose 1-cells are $p$-structured polynomials, together with a strict homomorphism of bicategories $\Polyc p \to \Polyc {\ca E}$, and the polynomial
\[ \xygraph{{1}="p0" [r] {E}="p1" [r] {B}="p2" [r] {1}="p3"
"p0":@{<-}"p1"^-{}:"p2"^-{p}:"p3"^-{}} \]
is by definition terminal in the hom category $\Polyc p(1,1)$. The monad structure it acquires is by definition that of Theorem \ref{thm:bagdomain-data}. Thus for any category $\ca E$ with finite limits, any monad in $\Polyc {\ca E}$ on $1$ arises in a manner analogous to the process described in Example \ref{ex:Fam-Ps-Mnd}.
\end{rem}
\begin{exam}\label{ex:monoid-monad-p-fib}
By concatenating linear orders on fibres, $U^{\N}$-fibrations as characterised in Example \ref{ex:p-for-monoid-monad} can be composed, and this composition is functorial with respect to morphisms of $U^{\N}$-fibrations. The polynomial monad one gets by Theorem \ref{thm:bagdomain-data} is the monoid monad on $\Set$. Regarding $U^{\N}$ as a functor between discrete categories and arguing the same way, one recovers the monad $\tnb{M}$ on $\Cat$.
\end{exam}

\subsection{Exhibiting the examples.}
\label{ssec:exhibiting-examples}
The foregoing discussion is adapted to the 2-categorical context in the following way. For an exponentiable morphism $p:E \to B$ in a 2-category $\ca K$ with pullbacks, we define $p$-fibrations and morphisms thereof as in Definition \ref{def:p-fibration}. Given $p$-fibrations $(f_1,u_1,v_1)$ and $(f_2,u_2,v_2)$ and morphisms thereof as in
\[ \xygraph{*!(0,1)=(0,1.75){\xybox{
\xygraph{!{0;(1.75,0):} 
{\xybox{\xygraph{{X_1}="p0" [r] {Y_1}="p1" [d] {B}="p2" [l] {E}="p3" "p0":"p1"^-{f_1}:"p2"^-{v_1}:@{<-}"p3"^-{p}:@{<-}"p0"^-{u_1}:@{}"p2"|-{\tn{pb}}}}}
[r]
{\xybox{\xygraph{{X_2}="p0" [r] {Y_2}="p1" [d] {B}="p2" [l] {E}="p3" "p0":"p1"^-{f_2}:"p2"^-{v_2}:@{<-}"p3"^-{p}:@{<-}"p0"^-{u_2}:@{}"p2"|-{\tn{pb}}}}}
[d(.015)r]
{\xybox{\xygraph{{X_1}="p0" [r] {Y_1}="p1" [d] {Y_2}="p2" [l] {X_2}="p3" "p0":"p1"^-{f_1}:"p2"^-{v_3}:@{<-}"p3"^-{f_2}:@{<-}"p0"^-{u_3}:@{}"p2"|-{\tn{pb}}}}}
[r]
{\xybox{\xygraph{{X_1}="p0" [r] {Y_1}="p1" [d] {Y_2}="p2" [l] {X_2}="p3" "p0":"p1"^-{f_1}:"p2"^-{v_4}:@{<-}"p3"^-{f_2}:@{<-}"p0"^-{u_4
}:@{}"p2"|-{\tn{pb}}}}}
[r(1.25)]
{\xybox{\xygraph{!{0;(1.25,0):(0,.8)::}
{X_1}="p0" [r] {Y_1}="p1" [d] {Y_2}="p2" [l] {X_2}="p3" "p0":"p1"^-{f_1}:@/^{1pc}/"p2"^(.4){v_4}|(.4){}="pcr":@{<-}"p3"^-{f_2}:@/_{1pc}/@{<-}"p0"_(.6){u_4}|(.6){}="pcl" "p1":@/_{1pc}/"p2"_(.6){v_3}|(.6){}="pdr" "p3":@/^{1pc}/@{<-}"p0"^(.4){u_3}|(.4){}="pdl"
"pdl":@{}"pcl"|(.25){}="dl"|(.75){}="cl" "dl":@{=>}"cl"^-{\alpha}
"pdr":@{}"pcr"|(.25){}="dr"|(.75){}="cr" "dr":@{=>}"cr"^-{\beta}}}}}
}}} \]
a 2-cell $(u_3,v_3) \to (u_4,v_4)$ is a pair $(\alpha,\beta)$ such that the cylinder on the right commutes, and $u_2\alpha = \id$ and $v_2\beta = \id$. In minimalistic terms using the 2-dimensional universal property of pullbacks, $(\alpha,\beta)$ is determined by the 2-cell $\beta$ over $B$.

We call a category (resp. functor) internal to $\TwoCAT$ a \emph{double 2-category} (resp. \emph{double 2-functor}). For any 2-category $\ca K$ one has the double 2-category $\ca D(\ca K)$
\[ \xygraph{!{0;(2,0):(0,1)::} {\ca K^{[2]}}="p0" [r] {\ca K^{[1]}}="p1" [r] {\ca K.}="p2"
"p2":"p1"|-{\id_{\ca K}} "p1":@<1.5ex>"p2"^-{\tn{dom}_{\ca K}} "p1":@<-1.5ex>"p2"_-{\tn{cod}_{\ca K}} "p0":@<1.5ex>"p1"^-{} "p0":"p1"|-{\tn{comp}_{\ca K}} "p0":@<-1.5ex>"p1"_-{}} \]
In the situation where $\ca K$ has pullbacks and $p$ is as above, the graph morphism $\ca U_p : \ca D(p) \to \ca D(\ca K)$ internal to $\TwoCAT$ is defined analogously to the definition given above with $\ca D(p)_0 = \ca K$, $(\ca U_p)_0 = 1_{\ca K}$, $\ca D(p)$ is the 2-category of $p$-fibrations as just defined and $(\ca U_p)_1$ is the evident forgetful 2-functor. The following result is proved in the same way as Theorem \ref{thm:bagdomain-data} with the 2-dimensional universal properties one now has providing the additional required information.
\begin{thm}\label{thm:2-bagdomain-data}
Let $\ca K$ be a 2-category with finite limits and $p:E \to B$ be an exponentiable morphism therein. There is a bijection between the following types of data:
\begin{enumerate}
\item Unit and multiplication 2-cells in $\Polyc {\ca K}$ making
\[ \xygraph{{1}="p0" [r] {E}="p1" [r] {B}="p2" [r] {1}="p3" "p0":@{<-}"p1"^-{}:"p2"^-{p}:"p3"^-{}} \]
the underlying endoarrow of a 2-monad in $\Polyc {\ca K}$.
\item Double 2-category structures on $\ca D(p)$ making $\ca U_p$ a double 2-functor.
\end{enumerate}
\end{thm}
There is also a version of this result useful for exhibiting morphisms of polynomial 2-monads. Given a pullback square
\[ \xygraph{!{0;(1.5,0):(0,.6667)::} {E_1}="p0" [r] {B_1}="p1" [d] {B_2}="p2" [l] {E_2}="p3" "p0":"p1"^-{p_1}:"p2"^-{v}:@{<-}"p3"^-{p_2}:@{<-}"p0"^-{u}:@{}"p2"|-{\tn{pb}}} \]
in $\ca K$ in which $p_1$ and $p_2$ are exponentiable, composing with this pullback square is the effect on objects of a 2-functor $\ca D(u,v)_1 : \ca D(p_1) \to \ca D(p_2)$. Defining $\ca D(u,v)_0 : \ca K \to \ca K$ to be the identity, one has a graph morphism $\ca D(u,v) : \ca D(p_1) \to \ca D(p_2)$ internal to $\TwoCAT$ over $\ca D(\ca K)$. It is straight forward to extend the proof of Theorem \ref{thm:2-bagdomain-data} to a proof of
\begin{thm}\label{thm:2-bagdomain-data-monad-morphism-version}
In the context just described, there is a bijection between the following types of data:
\begin{enumerate}
\item Unit and multiplication 2-cells on in $\Polyc {\ca K}$ making
\[ \xygraph{!{0;(1.5,0):(0,.5)::} {1}="p0" [ur] {E_1}="p1" [r] {B_1}="p2" [dr] {1}="p3" [dl] {E_2}="p4" [l] {B_2}="p5" "p0":@{<-}"p1"^-{}:"p2"^-{p_1}:"p3"^-{}:@{<-}"p4"^-{}:@{<-}"p5"^-{p_2}:"p0"^-{}
"p1":"p5"_-{u} "p2":"p4"^-{v}
"p1":@{}"p4"|-{\tn{pb}}} \]
the underlying endoarrow of a morphism of 2-monads in $\Polyc {\ca K}$.
\item Double 2-category structures on $\ca D(p_1)$ and $\ca D(p_2)$ making $\ca U_{p_1}$, $\ca U_{p_2}$ and $\ca D(u,v)$ double 2-functors.
\end{enumerate}
\end{thm}
We now have sufficiently many tools to enable us to witness the diagram (\ref{eq:examples-diagram}) as being a diagram of polynomial 2-monads. First we shall understand some of the relevant double 2-categories of $p$-fibrations for appropriate $p$. Recall from the definition of $U^{\P} : \P_* \to \P$ given in Section \ref{ssec:ex-endo-as-poly}, that $\P$ is the permutation category, and that $\P_*$ has objects those of $\N_*$ as described explicitly in Example \ref{ex:p-for-monoid-monad}. An arrow $(m,i) \to (n,j)$ of $\P_*$ can only exist when $m = n$, in which case it is a permutation $\rho \in \Sigma_n$ such that $\rho i = j$. The forgetful functor $U^{\P}$ is a discrete fibration of groupoids with finite fibres.
\begin{lem}\label{lem:UPFib}\ \\
\begin{enumerate}
\item To give a $U^{\P}$-fibration is to give a functor $f:X \to Y$ which is a discrete fibration and a discrete opfibration with finite fibres, together with a linear order on each fibre.\label{lcase:UPFib-0-cells}
\item Given $U^{\P}$-fibrations $f_1 : X_1 \to Y_1$ and $f_2 : X_2 \to Y_2$, a to give a morphism $f_1 \to f_2$ of $U^{\P}$-fibrations is to give a pair $(h,k)$ of functors fitting into a pullback square as on the left
\[ \xygraph{{\xybox{\xygraph{{X_1}="p0" [r] {Y_1}="p1" [d] {Y_2}="p2" [l] {X_2}="p3" "p0":"p1"^-{f_1}:"p2"^-{k}:@{<-}"p3"^-{f_2}:@{<-}"p0"^-{h}:@{}"p2"|-{
\tn{pb}}}}}
[r(4)]
{\xybox{\xygraph{!{0;(2,0):(0,.5)::} {X_1}="p0" [r] {Y_1}="p1" [d] {Y_2.}="p2" [l] {X_2}="p3" "p0":"p1"^-{f_1}:@/^{1pc}/"p2"^(.4){k_2}|(.4){}="pcr":@{<-}"p3"^-{f_2}:@/_{1pc}/@{<-}"p0"_(.6){h_2}|(.6){}="pcl" "p1":@/_{1pc}/"p2"_(.6){k_1}|(.6){}="pdr" "p3":@/^{1pc}/@{<-}"p0"^(.4){h_1}|(.4){}="pdl"
"pdl":@{}"pcl"|(.25){}="dl"|(.75){}="cl" "dl":@{=>}"cl"^-{\alpha}
"pdr":@{}"pcr"|(.25){}="dr"|(.75){}="cr" "dr":@{=>}"cr"^-{\beta}}}}} \]
such that for all $y \in Y_1$, $h|_{f_1^{-1}\{y\}} : f_1^{-1}\{y\} \to f_2^{-1}\{ky\}$ is order preserving.\label{lcase:UPFib-1cells}
\item Given $f_1$ and $f_2$ as in (\ref{lcase:UPFib-1cells}), and morphisms $(h_1,k_1)$ and $(h_2,k_2)$ of $U^{\P}$-fibrations, to give a 2-cell $(h_1,k_1) \to (h_2,k_2)$ in $\ca D(U^{\P})_1$ is to give a pair $(\alpha,\beta)$ fitting into a commutative cylinder as on the right in the previous display.
\label{lcase:UPFib-2-cells}
\end{enumerate}
\end{lem}
\begin{proof}
In each case we shall explain how to go from $U^{\P}$-fibrations, morphisms or 2-cells thereof to the data described in the statement, and how to go back, leaving to the reader the straight forward task of showing that these processes give the required bijections.

(\ref{lcase:UPFib-0-cells}): To give a functor $f:X \to Y$ the structure of a $U^{\P}$-fibration is by definition to give functors $u$ and $v$ fitting into a pullback square
\begin{equation}\label{eq:pbsquare-UPfibn}
\xygraph{{X}="p0" [r] {Y}="p1" [d] {\P.}="p2" [l] {\P_*}="p3" "p0":"p1"^-{f}:"p2"^-{v}:@{<-}"p3"^-{U^{\P}}:@{<-}"p0"^-{u}:@{}"p2"|-{\tn{pb}}}
\end{equation}
Thus $f$ is a discrete fibration and a discrete opfibration with finite fibres since $U^{\P}$ is, and these properties on a functor are pullback stable. For $y \in Y$, writing $n = vy$, because of the pullback (\ref{eq:pbsquare-UPfibn}) on objects, $u$ restricts to a bijection between $f^{-1}\{y\}$ and $(U^{\P})^{-1}\{n\}$, this latter set being $\{(n,1),...,(n,n)\}$ in explicit terms. But to give such a bijection is to give a linear order on $f^{-1}\{y\}$, by taking the $i$-th element of $f^{-1}\{y\}$ to be the element sent to $(n,i)$ by the bijection.

Conversely suppose one has a functor $f:X \to Y$ which is a discrete fibration and a discrete opfibration with finite fibres, together with a linear order on each fibre. For $y \in Y$ one can define $vy = |f^{-1}\{y\}|$. Putting $n = vy$ and denoting the linearly ordered set $f^{-1}\{y\}$ as $\{x_1 < ... < x_n\}$, for $1 \leq i \leq n$ we define $ux_i = (n,i)$. For $\beta : y \to y'$ in $Y$, the unique lifting property $f$ enjoys by being a discrete opfibration provides a function $f^{-1}\{y\} \to f^{-1}\{y'\}$, the unique lifting property $f$ enjoys by being a discrete fibration provides a function $f^{-1}\{y'\} \to f^{-1}\{y\}$, and the uniqueness of these lifting properties ensures that these functions are mutually inverse. Let $n = vy = vy'$ and denote by $\{x_1 < ... < x_n\}$ and $\{x'_1 < ... < x'_n\}$ the linearly ordered sets $f^{-1}\{y\}$ and $f^{-1}\{y'\}$ respectively. The lifting properties just described give us, for $1 \leq i \leq n$, a morphism $\alpha_i : x_i \to x'_{\rho i}$ of $X$, and the above bijection $f^{-1}\{y\} \to f^{-1}\{y'\}$ is given by $x_i \mapsto x'_{\rho i}$. Thus $\rho \in \Sigma_n$, and we define $v\beta = \rho$ and $u\alpha_i$ to be $\rho : (n,i) \to (n,\rho i)$.

(\ref{lcase:UPFib-1cells}): We write $(u_1,v_1)$ and $(u_2,v_2)$ for the morphisms which exhibit the $U^{\P}$-fibration structures of $f_1$ and $f_2$ respectively. By definition a morphism $f_1 \to f_2$ of $U^{\P}$-fibrations determines $(h,k)$ fitting into a pullback square as in the statement, and the equation $u_2h = u_1$ restricted to $f^{-1}\{y\}$, implies that $h|_{f_1^{-1}\{y\}}$ is order preserving.

Conversely suppose that one has $(h,k)$ as in the statement. We must verify that $u_2h = u_1$ and $v_2k = v_1$. For $y \in Y_1$ one has
\[ v_1y = |f_1^{-1}\{y\}| = |f_2^{-1}\{ky\}| = v_2ky  \]
in which the first and third equalities follow from the definitions of $v_1$ and $v_2$, and the second equality follows by the pullback of the statement on objects. Writing $x_{1,i}$ (resp. $x_{2,i}$) for the $i$-th element of $f_1^{-1}\{y\}$ (resp. $f_2^{-1}\{y\}$), one has
\[ u_1x_{1,i} = (n,i) = u_2x_{2,i} = u_2hx_i \]
where $n = v_1y = v_2ky$ and $x_{2,i}$ is the $i$-th element of $f_2^{-1}\{ky\}$, in which the first and second equalities follow by the definition of $u_1$ and $u_2$, and the third equality follows since $f_2h = kf_1$ and $h$ is order preserving on the fibres of $f_1$.

For $\beta : y \to y'$ in $Y_1$, we denote by $x_{1,i}$, $x_{2,i}$, $x'_{1,i}$ and $x'_{2,i}$ the $i$-th element of $f_1^{-1}\{y\}$, $f_2^{-1}\{ky\}$, $f_1^{-1}\{y'\}$ and $f_2^{-1}\{ky'\}$ respectively. As in (\ref{lcase:UPFib-0-cells}) we have  $\alpha_{1,i} : x_{1,i} \to x'_{1,\rho_1 i}$ such that $f_1\alpha_{1,i} = \beta$, and $\alpha_{2,i} : x_{2,i} \to x'_{2,\rho_2 i}$ such that $f_2\alpha_{2,i} = k\beta$, where $n = v_1y = v_1y'$ and $\rho_1$, $\rho_2 \in \Sigma_n$. We must show that $v_2k\beta = v_1\beta$, that is that $\rho_1 = \rho_2$, and that $u_2h\alpha_{1,i} = u_1\alpha_{1,i}$ for all $1 \leq i \leq n$. But since $h\alpha_{1,i} : hx_{1,i} \to hx'_{1,\rho_1i}$, $hx_{1,i} = x_{2,i}$, $hx'_{1,\rho_1i} = x'_{2,\rho_1i}$, and $f_2h\alpha_{1,i} = k\beta$, these equations follow from the uniqueness of lifts for $f_2$.

(\ref{lcase:UPFib-2-cells}): By definition a 2-cell $(h_1,k_1) \to (h_2,k_2)$ of  $\ca D(U^{\P})_1$ consists of $(\alpha,\beta)$ of the statement making the cylinder commute, and moreover verifying $u_2\alpha = \id$ and $v_2\beta = \id$. It suffices to show that these last two equations are automatic.

Let $y \in Y_1$ and denote by $\{x_1 < ... < x_n\}$ the linearly ordered fibre $f_1^{-1}\{y\}$. We have $\beta_y : k_1y \to k_2y$ in $Y_2$, and for $1 \leq i \leq n$, we have $\alpha_{x_i} : h_1x_i \to h_2x_i$ in $X_2$. By the commutativity of the cylinder $f_2\alpha_{x_i} = \beta_y$. By the definition of $v_2$ on arrows, one has $v_2\beta_y = \id$, and so by the definition of $u_2$ on arrows, one has $u_2\alpha_{x_i} = \id$ as required.
\end{proof}
Thanks to Lemma \ref{lem:UPFib} we have an explicit description of graph $\ca D(U^{\P})$ internal to $\TwoCAT$ and of the internal graph morphism $\ca U_{U^{\P}}$, which involves forgetting the $U^{\P}$-fibration structures. Similarly one can understand
\[ \begin{array}{lccr} {\ca U_{U^{\S}} : \ca D(U^{\S}) \longrightarrow \ca D(\Cat)} &&& {\ca U_{U^{\S^{\op}}} : \ca D(U^{\S^{\op}}) \longrightarrow \ca D(\Cat)} \end{array} \]
by analysing what $U^{\S}$-fibrations (resp. $U^{\S^{\op}}$-fibrations) and their morphisms amount to. To give a functor $f : X \to Y$ a $U^{\S}$-fibration (resp. $U^{\S^{\op}}$-fibration) structure, is to exhibit it as a discrete opfibration (resp. discrete fibration) with finite fibres, and to give a linear order on each fibre. Via the pullback squares relating $U^{\P}$, $U^{\S}$ and $U^{\S^{\op}}$, one has internal graph morphisms $\ca D(\iota^S_C) : \ca D(U^{\P}) \to \ca D(U^{\S})$ and $\ca D(\iota^S_P) : \ca D(U^{\P}) \to \ca D(U^{\S^{\op}})$ over $\ca D(\Cat)$.
\begin{thm}\label{thm:Cat-examples}
The diagram
\[ \xygraph{!{0;(1,0):(0,.5)::} {\MCMnd}="p0" [r] {\BMCMnd}="p1" [r] {\SMCMnd}="p2" [ur] {\FCMnd}="p3" [d(2)] {\FPMnd}="p4" "p0":"p1"_-{\iota^M_B}:"p2"_-{\pi}(:"p3"^-{\iota^S_C},:"p4"_-{\iota^S_P})
"p0":@/^{1.5pc}/"p2"^-{\iota^M_S}} \]
described in (\ref{eq:examples-diagram}) Section \ref{ssec:Cart-2-Monads}, is a diagram of polynomial 2-monads and morphisms thereof.
\end{thm}
\begin{proof}
Any identity functor has a unique $U^{\P}$-fibration structure. Given $U^{\P}$-fibrations $f:X \to Y$ and $g:Y \to Z$, the composite $gf$ is a discrete fibration and discrete opfibration whose fibres are finite. For any $z \in Z$, an element of the fibre $(gf)^{-1}\{z\}$ may be identified as a pair $(x,y)$, where $y \in g^{-1}\{z\}$ and $x \in f^{-1}\{y\}$. Defining $(x_1,y_1) \leq (x_2,y_2)$ if and only if $y_1 < y_2$ or $y_1 = y_2$ and $x_1 \leq x_2$, provides $(gf)^{-1}\{z\}$ with a linear order. Thus there is an evident composition of $U^{\P}$-fibrations. Thus $\ca D(U^{\P})$ acquires a double 2-category structure making $\ca U_{U^{\P}} : \ca D(U^{\P}) \to \ca D(\ca K)$ a double 2-functor. Moreover $\ca U_{U^{\S}}$, $\ca U_{U^{\S^{\op}}}$, $\ca D(\iota^S_C)$ and $\ca D(\iota^S_P)$ are easily witnessed as double 2-functors, thanks to our explicit understanding of $U^{\P}$-fibrations, $U^{\S}$-fibrations and $U^{\S^{\op}}$-fibrations. Thus $\FCMnd$, $\FPMnd$ and $\SMCMnd$ are polynomial 2-monads and $\iota^S_C$ and $\iota^S_P$ are morphisms thereof, by Theorems \ref{thm:2-bagdomain-data} and \ref{thm:2-bagdomain-data-monad-morphism-version}. To exhibit $\tnb{B}$, $\tnb{M}$, $\pi$ and $\iota^M_B$ as polynomial, we appeal to Proposition \ref{prop:poly-along-cart-2cat-case} and Corollary \ref{cor:examples-as-polyendos}, using the fact that $\tnb{S}$ is a polynomial 2-monad.
\end{proof}
Having exhibited these examples as polynomial, Theorems \ref{thm:fibrational-polynomials} and \ref{thm:polynomials-sifted-colims} enable us to read off some of their categorical properties.
\begin{cor}\label{cor:properties-of-2-monads}\ \\
\begin{enumerate}
\item All the 2-monads of Theorem \ref{thm:Cat-examples} are sifted colimit preserving.
\item \cite{Weber-Fam2fun} $\MCMnd$, $\BMCMnd$ and $\SMCMnd$ are familial and opfamilial, $\FCMnd$ is familial, and $\FPMnd$ is opfamilial.
\item $\iota^S_C$ is a familial 2-monad morphism, $\iota^S_P$ is an opfamilial 2-monad morphism, and all the other morphisms of Theorem \ref{thm:Cat-examples} are both familial and opfamilial.
\end{enumerate}
\end{cor}

\section*{Acknowledgements}
\label{sec:Acknowledgements}
The author would like to acknowledge Michael Batanin, Richard Garner, Martin Hyland, Joachim Kock and Ross Street for interesting discussions on the subject of this paper. The author would also like to acknowledge the financial support of the Australian Research Council grant No. DP130101172.


\begin{thebibliography}{10}

\bibitem{AAG-Containers}
M.~Abbott, T.~Altenkrich, and N.~Ghani.
\newblock Categories of containers.
\newblock In {\em Foundations of Software Science and Computation Structures},
  volume 2620 of {\em Lecture Notes in Computer Science}, pages 23--38.
  Springer, 2003.

\bibitem{BataninBerger-HtyThyOfAlgOfPolyMnd}
M.~Batanin and C.~Berger.
\newblock Homotopy theory of algebras of polynomial monads.
\newblock ArXiv:1305.0086.

\bibitem{BataninWeber-Guitart}
M.~Batanin and M.~Weber.
\newblock Algebraic {Kan} extensions from morphisms of 2-monads.
\newblock In preparation.

\bibitem{BissonJoyal-DyerLashof}
T.~Bisson and A.~Joyal.
\newblock The {Dyer-Lashof} algebra in bordism.
\newblock {\em C.R. Math. Rep. Acad. Sci. Can.}, 17:135--140, 1995.

\bibitem{BWellKellyPower-2DMndThy}
R.~Blackwell, G.~M. Kelly, and A.~J. Power.
\newblock Two-dimensional monad theory.
\newblock {\em J. Pure Appl. Algebra}, 59:1--41, 1989.

\bibitem{Bourke-CatPB}
J.~Bourke.
\newblock The category of categories with pullbacks is cartesian closed.
\newblock arXiv:0904.2486, 2009.

\bibitem{Bourke-Thesis}
J.~Bourke.
\newblock {\em Codesent objects in 2-dimensional universal algebra}.
\newblock PhD thesis, University of Sydney, 2010.

\bibitem{Brun-WittTambara}
M.~Brun.
\newblock Witt vectors and {Tambara} functors.
\newblock {\em Advances in Mathematics}, 193:233--256, 2005.

\bibitem{Brun-WittSpectraCobordism}
M.~Brun.
\newblock Witt vectors and equivariant ring spectra applied to cobordism.
\newblock {\em Proc. London Math. Soc.}, 94:351--385, 2007.

\bibitem{Dubuc-KanExtensions}
E.~Dubuc.
\newblock {\em Kan extensions in enriched category theory}.
\newblock Number 145 in SLNM. Springer Verlag, 1970.

\bibitem{GambinoKock-PolynomialFunctors}
N.~Gambino and J.~Kock.
\newblock Polynomial functors and polynomial monads.
\newblock {\em Mathematical Proceedings Cambridge Philosophical Society},
  154:153--192, 2013.

\bibitem{Johnstone-FibrationsPartialProducts}
P.T. Johnstone.
\newblock Fibrations and partial products in a 2-category.
\newblock {\em Applied Categorical Structures}, 1:141--179, 1993.

\bibitem{Johnstone-VariationsBagdomain}
P.T. Johnstone.
\newblock Variations on the bagdomain theme.
\newblock {\em Theoretical Computer Science}, 136:3--20, 1994.

\bibitem{JoyalMoerdijk-AlgSetTheory}
A.~Joyal and I.~Moerdijk.
\newblock {\em Algebraic Set Theory}.
\newblock Cambridge University Press, 1995.

\bibitem{JoyalStreet-GeometryTensorCalculus}
A.~Joyal and R.~Street.
\newblock The geometry of tensor calculus {I}.
\newblock {\em Advances in Mathematics}, 88:55--112, 1991.

\bibitem{Kelly-ClubsDoctrines}
G.M. Kelly.
\newblock On clubs and doctrines.
\newblock {\em Lecture Notes in Math.}, 420:181--256, 1974.

\bibitem{Kelly-EnrichedCatsBook}
G.M. Kelly.
\newblock {\em Basic concepts of enriched category theory, LMS lecture note
  series}, volume~64.
\newblock Cambridge University Press, 1982.
\newblock Available online as {TAC} reprint no. 10.

\bibitem{KellyLack-PropertyLikeStructures}
G.M. Kelly and S.~Lack.
\newblock On property-like structures.
\newblock {\em Theory and applications of categories}, 3:213--250, 1997.

\bibitem{KellyStreet-ElementsOf2Cats}
G.M. Kelly and R.~Street.
\newblock Review of the elements of 2-categories.
\newblock {\em Lecture Notes in Math.}, 420:75--103, 1974.

\bibitem{KockA-FibEMAlg}
A.~Kock.
\newblock Fibrations as {Eilenberg-Moore} algebras.
\newblock ArXiv:1312.1608.

\bibitem{KockA-KZMonads}
A.~Kock.
\newblock Monads for which structure are adjoint to units.
\newblock {\em J. Pure Appl. Algebra}, 104:41--59, 1995.

\bibitem{BatJoyKockMas-Opetopes}
Joachim Kock, Andr{\'e} Joyal, Michael Batanin, and Jean-Fran{\c{c}}ois
  Mascari.
\newblock Polynomial functors and opetopes.
\newblock {\em Adv. Math.}, 224(6):2690--2737, 2010.

\bibitem{Lack-Codescent}
S.~Lack.
\newblock Codescent objects and coherence.
\newblock {\em J. Pure Appl. Algebra}, 175:223--241, 2002.

\bibitem{Lawvere-OrdSumDoct}
F.~W. Lawvere.
\newblock Ordinal sums and equational doctrines.
\newblock {\em Lecture Notes in Math.}, 80:141--155, 1969.

\bibitem{Power-GeneralCoherenceResult}
A.~J. Power.
\newblock A general coherence result.
\newblock {\em J. Pure Appl. Algebra}, 57:165--173, 1989.

\bibitem{Street-FTM}
R.~Street.
\newblock The formal theory of monads.
\newblock {\em J. Pure Appl. Algebra}, 2:149--168, 1972.

\bibitem{Street-FibrationIn2cats}
R.~Street.
\newblock Fibrations and {Y}oneda's lemma in a $2$-category.
\newblock {\em Lecture Notes in Math.}, 420:104--133, 1974.

\bibitem{Street-CosmoiOfInternalCats}
R.~Street.
\newblock Cosmoi of internal categories.
\newblock {\em Trans. Amer. Math. Soc.}, 258:271--318, 1980.

\bibitem{Street-FibrationInBicats}
R.~Street.
\newblock Fibrations in bicategories.
\newblock {\em Cahiers Topologie G\'{e}om. Differentielle}, 21:111--160, 1980.

\bibitem{Tambara-MultTransfer}
D.~Tambara.
\newblock On multiplicative transfer.
\newblock {\em Comm. Alg.}, 21:1393--1420, 1993.

\bibitem{VonGlehn-Thesis}
T.~von Glehn.
\newblock {\em Polynomials and models of type theory}.
\newblock PhD thesis, University of Cambridge, 2015.

\bibitem{Weber-CodescCrIntCat}
M.~Weber.
\newblock Internal algebra classifiers as codescent objects of crossed internal
  categories.
\newblock ArXiv:1503.07585.

\bibitem{Weber-OpPoly2Mnd}
M.~Weber.
\newblock Operads as polynomial 2-monads.
\newblock ArXiv:1412.7599.

\bibitem{Weber-Fam2fun}
M.~Weber.
\newblock Familial 2-functors and parametric right adjoints.
\newblock {\em Theory and applications of categories}, 18:665--732, 2007.

\bibitem{Weber-2Toposes}
M.~Weber.
\newblock Yoneda structures from 2-toposes.
\newblock {\em Applied Categorical Structures}, 15:259--323, 2007.

\bibitem{Wood-Proarrows-I}
R.~J. Wood.
\newblock Abstract pro arrows {I}.
\newblock {\em Cahiers Topologie G\'{e}om. Diff\'{e}rentielle
  Cat\'{e}goriques}, 23(3):279--290, 1982.

\bibitem{Wood-Proarrows-II}
R.~J. Wood.
\newblock Abstract pro arrows {II}.
\newblock {\em Cahiers Topologie G\'{e}om. Diff\'{e}rentielle
  Cat\'{e}goriques}, 26(2):135--168, 1985.

\end{thebibliography}

\end{document}